\documentclass{article}

\title{$P$-adic $L$-functions: $t$-modules and Dirichlet-Goss $L$-series}

\date{}
\author{Daniel Krell Calvo }

\usepackage[english]{babel}
\usepackage[T1]{fontenc}
\usepackage[utf8]{inputenc}
\usepackage{amsfonts}
 \usepackage{mathrsfs} 
\usepackage{amsmath}
\usepackage{amsthm}
\usepackage{amssymb}
\usepackage{url}
\usepackage{hyperref}
\usepackage{enumerate}
\usepackage{dsfont}
\usepackage{moreenum}
\usepackage{mathtools}
\usepackage{verbatim}

  \newcommand{\N}{\mathbb{N}}     % Naturales
  \newcommand{\C}{\mathbb{C}}     % Complejos
  \newcommand{\F}{\mathbb{F}}	  % Cuerpo finito
  \newcommand{\Fq}{\mathbb{F}_q}	  % Cuerpo finito
  \newcommand{\D}{\mathcal{D}}	 
  
   \newcommand{\OF}{\mathcal{O}_F}  % Ring of integers
   \newcommand{\OFs}{\mathcal{O}_{F,s}}

  \newcommand{\sep}{\text{sep}}
    \newcommand{\T}{\mathbb{T}}     % Tate series
    \newcommand{\Cn}{C^{\otimes n}}

    \newcommand{\Ksi}{K_{s,\infty}}
    \newcommand{\Fqchi}{\F_q(\chi)}
    \newcommand{\Fqs}{\F_q(\underline{t})}

    \newcommand{\Et}{\widetilde{E}}

   \newcommand{\expEt}{\exp_{\Et}}
   \newcommand{\LieE}{\Lie_E}
  \newcommand{\LieEt}{\Lie_{\Et}}

  \newcommand{\Fsi}{F_{s,\infty}}

\newtheorem{theorem}{Theorem}
\newtheorem{theoremA}{Theorem}

\newtheorem{lemma}[theorem]{Lemma}
\newtheorem{prop}[theorem]{Proposition}
\newtheorem*{prop*}{Proposition}
\newtheorem{cor}[theorem]{Corollary}

\newtheorem*{question}{Question}
\newtheorem*{claim}{Claim}

\theoremstyle{remark}
\newtheorem{remark}{Remark}

\theoremstyle{definition}
\newtheorem{defin}[theorem]{Definition}

\DeclareMathOperator{\sgn}{sgn}

\DeclareMathOperator{\ord}{ord}

\DeclareMathOperator{\Mat}{Mat}

\DeclareMathOperator{\Lie}{Lie}

\DeclareMathOperator{\I}{I}
\DeclareMathOperator{\lcm}{lcm}

\DeclareMathOperator{\Log}{Log}

\DeclareMathOperator{\lc}{lc}
\DeclareMathOperator{\cond}{cond}

\hypersetup{
  colorlinks   = true, %Colours links instead of ugly boxes
  urlcolor     = black, %Colour for external hyperlinks
  linkcolor    = blue, %Colour of internal links
  citecolor   = green %Colour of citations
}

\makeindex

\begin{document}

\maketitle

\begin{abstract}
    We prove that the order of vanishing of a natural twist of the $P$-adic Carlitz zeta values at the positive "even" integers is always $1$. We also obtain the same result for $P$-adic Pellarin $L$-series, and for almost all $P$-adic Dirichlet-Goss $L$-series. To do so, we demonstrate a $P$-adic version of a reduced variant of the class formula for Anderson $t$-modules proven by Anglès, Ngo Dac and Tavares-Ribeiro in 2020.
\end{abstract}

\tableofcontents

\section{Introduction}
Let $\F_q$ be a finite field with $q$ elements, where $q$ is a power of a prime number. Let $A=\F_q[\theta]$ be a polynomial ring, and let $A_+$ be the subset of monic polynomials. The Carlitz zeta values
$$\zeta(n):=\sum_{a\in A_+}\frac{1}{a^n},\qquad n\in \N,$$
are the function field equivalent to the values at the positive integers of the Riemann zeta function. In \cite{carlitz_certain_1935}, Carlitz discovered that there exists a link between the special value at $1$ and a certain $A$-module, which is now known as the Carlitz module. More specifically, we can attach a logarithm $\log_C$ to the Carlitz module, and we have
\begin{equation}\label{eq : Carlitz identity}
    \zeta(1)=\log_{C}(1).
\end{equation}

This phenomenon happens in much greater generality. It is a particular case of what we now call the \textit{class formula}.

Let $K=\F_q(\theta)$, let $F/K$ be a finite extension, and let $\OF$ be the integral closure of $A$ in $F$. Let $K_\infty$ be the completion of $K$ with respect to the place of infinity $v_\infty=-\deg_{\theta}$, and let $\tau:x\mapsto x^q$ denote the Frobenius morphism. An Anderson $A$-module (or Anderson module in short) $E/\OF$ of dimension $n$ is a morphism of $\F_q$-algebras $E:A \rightarrow\Mat_{n\times n}(\OF)\{\tau\}$ such that, if for any $a\in A$ we write 
$$E_a=\partial_E(a)+\sum_{i\geq 1}E_{a,i}\tau^i,$$ 
then we require $(\partial_E(a)-a \I_n)^n=0$.

Given an $\OF$-algebra $B$, we can define two new $A$-module structures on $B^n$: one denoted by $E(B)$, where $A$ acts on $B^d$ via $E$, and one denoted by $\LieE(B)$, where $A$ acts on $B^n$ via $\partial_E$.

 To every Anderson module $E$, we can associate two special series living in $\Mat_{n\times n}(K)\{\{\tau\}\}$: an exponential $\exp_E$ and a logarithm $\log_E$. These series induce formal morphisms between the structures $E(K_\infty)$ and $\LieE(K_\infty)$ (see Section \ref{sec : Background and notation}).
 
We define the $L$-series (or $L$-value) of an Anderson $A$-module $E/\OF$ as the convergent infinite product
$$L(E/\OF):= \prod_{P}\frac{\left[\Lie_E\left(\frac{\OF}{P\OF}]\right)\right]_A}{\left[E\left(\frac{\OF}{P\OF}\right)\right]_A}\in K_\infty^\times,$$
where $P$ runs trough all monic irreducible elements of $A$, and $[\cdot]_A$ denotes the monic generator of the Fitting ideal. We note that in this case, the infinite product never vanishes. The \textit{class formula}  
\begin{equation*}
    L(E/\OF)=[\Lie_E(\OF) : U(E/\OF)]_A \cdot [H(E/\OF)]_A
\end{equation*}
provides a link between the $L$-series of $E$ and some other quantities associated to $E$. Of the two terms in the right hand-side, we will be mainly interested in the regulator
$$[\Lie_E(\OF) : U(E/\OF)]_A\in K_\infty,$$
which is a ratio of covolumes of two lattices of rank $n$. The other term $[H(E/\OF)]_A$, which is the Fitting ideal of the so-called class module, is an element that lives in $A$. This formula was first proven by Taelman in \cite{taelman_special_2012} for Drinfeld modules (\textit{i.e.} Anderson modules of dimension $1$), and later on by Fang \cite{fang_special_2015} and Demeslay \cite{demeslay_class_2022} for Anderson modules. A $P$-adic version of the class formula for Anderson modules was proven by Lucas in \cite{lucas_P-adic_2026}.

Before moving on, we will quickly highlight an analogy with the classical characteristic $0$ theory. The class formula for Anderson modules can be thought of as an analogue of the class number formula for number fields. We recall that if $k$ is a number field and $\zeta_k(n)$ denotes its corresponding Dedekind zeta function, then the class number formula
$$\lim_{s \to 1} (s-1)\,\zeta_k(s)
\;=\;
\frac{2^{r_1}\,(2\pi)^{r_2}\,\,\operatorname{Reg}_k}{w_k\,\sqrt{|D_k|}}h_k$$
gives us a link between the zeta function and some other quantities associated to $k$, including the regulator $\operatorname{Reg}_k$ and the class number $h_k$. We refer to \cite[Corollary 5.11]{neukirch_algebraic_1999} for more details.

Let us return to the function field case. The most prominent example of Drinfeld modules is the Carlitz module, which we denote by $C$, and was already introduced at the beginning of this section. It can be shown that $L(C/A)=\zeta(1)$, and that Equation \eqref{eq : Carlitz identity} is exactly the class formula for $C$. More generally, the $n$-th tensor power of the Carlitz module, which we denote by $\Cn$, is an Anderson module of dimension $n$, with the property that $L(\Cn/A)=\zeta(n)$. The class formula then gives us a link between $\zeta(n)$ and $\Cn$.

 For a family of Anderson modules including $\Cn$, Anglès, Ngo Dac and Tavares-Ribeiro showed in \cite{angles_special_2020} a reduced version of the class formula. They proved that there exists a vector space $W$ such that
\begin{equation} \label{eq : class formula modulo K}
    L(E/\OF) = [\LieE(\OF)\cap W : U(E/\OF)\cap W]_{A} \pmod{K^*},
\end{equation}
where $\LieE(\OF)\cap W$ and $U(E/\OF)\cap W$ are now lattices of smaller rank. The main goal of this article is to prove a $P$-adic version of this result, which will allow us to obtain some properties about certain $P$-adic $L$-series.

Let $P$ be an irreducible element of $A_+$, and let $K_P$ be the $P$-adic completion of $K$. We define the $P$-adic $L$-series of $E/\OF$ as
$$L_P(E/\OF):=\prod_{Q\neq P}\frac{[\LieE(\OF/Q\OF)]_A}{[E(\OF/Q\OF)]_A} \in K_P,$$
where $Q$ runs over all monic irreducible elements of $A$ different from $P$. Unlike in the $\infty$-adic case, the series $L_P(E/\OF)$ can vanish.

In order to prove a $P$-adic version of \eqref{eq : class formula modulo K}, we will first derive a $z$-twisted version, and then specialize $P$-adically at $z=1$. Let us clarify what we mean by this. Let $z$ be a new variable that commutes with everything. Let $\widetilde{A}=\F_q(z)A$ and $\widetilde{\OF}=\F_q(z)\OF$. Given an Anderson module $E/\OF$, we define the corresponding twisted module as the $\F_q(z)$-algebra morphism $\widetilde{E}:\widetilde{A} \rightarrow \Mat_{n\times n}(\widetilde{\OF})\{\tau\}$ given by
$$\widetilde{E}_a = \partial_E(a)+\sum_{i\geq 1}E_{a,i}z^i\tau^i.$$

The twisted $L$-values $L(\widetilde{E}/ \widetilde{\OF})$ and $L_P(\widetilde{E}/ \widetilde{\OF})$ are defined analogously to $L(E/\OF)$ and $L_P(E/\OF)$, and we have
$$L(\widetilde{E}/\widetilde{\OF})|_{z=1}=L(E/\OF),\qquad L_P(\widetilde{E}/\widetilde{\OF})|_{z=1}=L_P(E/\OF).$$
 The key remark is that both $L(\widetilde{E}/\widetilde{\OF})$ and $L_P(\widetilde{E}/\widetilde{\OF})$ live in $K[[z]]$. Thus, we can compare both series over $K[[z]]$, where $L(\widetilde{E}/\widetilde{\OF})$ equals $L_P(\widetilde{E}/\widetilde{\OF})$ times the local factor at $P$.

As discussed, we will obtain a twisted version of \eqref{eq : class formula modulo K}, linking $L(\widetilde{E}/ \widetilde{\OF})$ to the corresponding twisted objects. The proof of this part is formally the same as in the case without the twist. All elements of the twisted formula will live in $K((z))$ and have no $P$-adic poles at $z=1$. By evaluating at $z=1$ and looking at the $P$-adic convergence we will obtain the $P$-adic formula, which we now state. We note that all objects with tildes on top are suitable $z$-twists of the original objects. 

\begin{theoremA}[Theorem \ref{th : P-adic formula W}] \label{th A}
    Let $E/\OF$ be an $A$-finite Anderson module such that $\sigma  N_E(L)\subseteq (t-\theta)N_E(L)$. Over $\widetilde{K_P}$, we have the equality
    $$L_P(\widetilde{E}/\widetilde{\OF})= [\LieEt(\widetilde{\OF})\cap \widetilde{W} : U(\widetilde{E}/\widetilde{\OF})\cap \widetilde{W}]_{\widetilde{A},P} \pmod{\widetilde{K}^*}.$$
    Over $K_P$, we have
    $$L_P(E/\OF)= [\LieE(\OF)\cap W : U(E/\OF)\cap W]_{A,P}\pmod{K^*}.$$
\end{theoremA}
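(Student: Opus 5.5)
The plan is to follow the strategy outlined in the introduction: first prove an $\infty$-adic twisted reduced class formula over $\widetilde{K}_\infty$, then transfer it to $P$. The transfer compares series in $K[[z]]$, and the final step specializes at $z=1$ using $P$-adic convergence.

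\textbf{Step 1: the twisted $\infty$-adic formula.} I will establish the twisted analogue of \eqref{eq : class formula modulo K},
$$L(\widetilde{E}/\widetilde{\OF}) = [\LieEt(\widetilde{\OF})\cap \widetilde{W} : U(\widetilde{E}/\widetilde{\OF})\cap \widetilde{W}]_{\widetilde{A}} \pmod{\widetilde{K}^*}.$$
The argument will repeat that of Anglès, Ngo Dac and Tavares-Ribeiro with $\F_q$ replaced by $\F_q(z)$. The hypothesis $\sigma N_E(L)\subseteq (t-\theta)N_E(L)$ supplies the subspace $W$ via the $t$-motive (here $N_E$ is the dual $t$-motive). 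Its twist $\widetilde{W}$ is cut out in the same way, and the twisted class formula of Demeslay/Fang reduces to the rank-reduced one. The $A$-finiteness of $E$ makes the twisted motive $\widetilde{A}$-finite, so this step is formal.

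\textbf{Step 2: rewrite both sides as Laurent series in $z$ with $K$-coefficients.} The $L$-series side is easy. The twisted Euler factors are polynomials in $z$, and
$$L(\widetilde{E}/\widetilde{\OF}) = L_P(\widetilde{E}/\widetilde{\OF})\cdot \frac{[\LieEt(\widetilde{\OF}/P\widetilde{\OF})]_{\widetilde{A}}}{[\widetilde{E}(\widetilde{\OF}/P\widetilde{\OF})]_{\widetilde{A}}}$$
holds in $K[[z]]$. The local factor at $P$ lies in $\widetilde{K}^*$, so it is absorbed modulo $\widetilde{K}^*$.

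The regulator side needs more work. I will choose $\widetilde{A}$-bases of $\LieEt(\widetilde{\OF})\cap\widetilde{W}$ and of $U(\widetilde{E}/\widetilde{\OF})\cap\widetilde{W}$. The unit module is generated by $\log_{\widetilde{E}}$ of elements of $\widetilde{E}(\widetilde{\OF})$, and since the $z^i\tau^i$ twist makes $\log_{\widetilde E}$ converge $z$-adically, these generators lie in $\widetilde{\OF}\otimes K[[z]]$. Their coefficients are given by $K$-rational formulas that make sense in both $K_\infty$ and $K_P$. The regulator is then a determinant with entries in $K((z))$, and we can define $[\cdot]_{\widetilde{A},P}$ by reading the same determinant $P$-adically. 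Because equality modulo $\widetilde{K}^*$ is an identity in $K((z))$ independent of the completion, Step 1 yields the first claimed equality over $\widetilde{K_P}$.

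\textbf{Step 3: specialize at $z=1$.} This is the main obstacle. I must show three things: both sides converge $P$-adically at $z=1$; the $P$-adic specialization of the twisted regulator is $[\LieE(\OF)\cap W : U(E/\OF)\cap W]_{A,P}$; and the correction factor in $\widetilde{K}^*$ specializes to an element of $K^*$, that is, it has no zero or pole at $z=1$.

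For convergence of the logarithm coefficients at $z=1$, I will first apply a suitable power $E_{P^k}$ so that the unit generators become $P$-adically small. Then $\log_E$ converges $P$-adically, using Lucas's $P$-adic class formula setup. Only a finite index change in $\widetilde{A}$ results, and it is absorbed into $\widetilde{K}^*$.

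For the correction factor, I will argue that it is a ratio of polynomials in $z$ whose leading behaviour is governed by Fitting ideals that are nonzero at $z=1$; this is where the $A$-finiteness enters. If needed, I will compare it with the untwisted $\infty$-adic formula \eqref{eq : class formula modulo K} at $z=1$ to rule out a zero or pole there. Evaluating the twisted $P$-adic identity at $z=1$ then gives the second statement.
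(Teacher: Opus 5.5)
Your architecture matches the paper's: a twisted $\infty$-adic reduced formula, a transfer through $K((z))$ with $Z_P$ split off, then specialization at $z=1$. However, Step 3 contains a step that fails and omits the idea that actually controls $z=1$.

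First, applying $E_{P^k}$ does not make points $P$-adically small, because the $\tau$-part of $E_P$ survives reduction mod $P$. For Carlitz, $\widetilde{C}_P\equiv z^{\deg P}\tau^{\deg P}\pmod P$, so $\widetilde{C}_{P^k}(1)\equiv z^{k\deg P}\not\equiv 0$, and $\log_{\widetilde{E},P}$ does not converge there. You must instead apply an annihilator of the reduction mod $P$. Take $h(z)=g(z)^{q^s}$ with $g(z)=[\widetilde{E}(\widetilde{\OF}/P\widetilde{\OF})]_{\widetilde{A}}\in A[z]$ and $q^s\geq n$, and set $\Log_{\widetilde{E},P}=h^{-1}\log_{\widetilde{E},P}\circ\widetilde{E}_{h}$. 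This map sends $\OF[z]^n$ into $h(z)^{-1}\LieEt(\T(F_P))$. Since $g(1)=[E(\OF/P\OF)]_A\neq 0$, there is no $P$-adic pole at $z=1$. Formally in $F((z))$ this map still equals $\log_{\widetilde{E}}$, so no index change is needed at all.

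Second, you list two tasks but give no mechanism for either: that the twisted regulator specializes to the $P$-adic covolume over $W$, and that $\widetilde{\alpha}$ has no zero or pole at $z=1$. Since $\F_q(z)^*\subset\widetilde{A}^*$, bases can be rescaled by powers of $z-1$, so neither is automatic. The missing ingredient is the paper's basis lemma. Choose $\widetilde{u}_i\in U(\widetilde{E}/\OF[z])\cap\widetilde{W}$ and $\widetilde{b}_i\in\LieEt(\OF[z])\cap\widetilde{W}$ not divisible by $z-1$. Their values at $z=1$ are then $K_\infty$-linearly independent and span full $A$-lattices in $W$. Combined with $\delta(z)=\det\Mat_{\mathcal{B}}(\widetilde{b}_i)\in K[z]$ and $\delta(1)\neq 0$, this shows that $\det\Mat_{(\widetilde{b}_i)}(\widetilde{u}_i)$ has neither a zero nor a pole at $z=1$.

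Hence $\widetilde{\alpha}=L(\widetilde{E}/\widetilde{\OF})/\det M$ is a unit at $z=1$, because $L(E/\OF)\in K_\infty^*$. This must be done on the $\infty$-adic side, since $L_P$ can vanish at $z=1$. Your argument via Fitting ideals that are nonzero at $z=1$ only handles the local factor $Z_P(\widetilde{E}/\widetilde{\OF})$, whose denominator is $g(z)$. It does not handle the constant produced by the twisted reduced class formula, about which nothing explicit is known. The same lemma identifies $\det(M_P)|_{z=1}$ with $[\LieE(\OF)\cap W:U(E/\OF)\cap W]_{A,P}$ modulo $K^*$, via a change of basis to genuine $A$-bases of these intersection lattices.
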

In Theorem \ref{th A},
$$[\LieE(\OF)\cap W : U(E/\OF)\cap W]_{A,P}\in K_P$$
is a $P$-adic version of the ratio of covolumes. It is particularly easy to compute when the lattices $\LieE(\OF)\cap W$ and $U(E/\OF)\cap W$ are of rank $1$. In such cases, we will be able to obtain some new results about the \textit{order of vanishing} of the $P$-adic $L$-series of $E$. We recall that $L_P(\widetilde{E}/\widetilde{\OF})|_{z=1}=L_P(E/\OF)$. Thus, if $L_P(E/\OF)$ vanishes, a natural follow-up question would be to ask what the order of vanishing of $L_P(\widetilde{E}/\widetilde{\OF})$ at $z=1$ is. 

\begin{theoremA}[Corollary \ref{cor : vanishing P-adic L} and Theorem \ref{th : vanishing does not depend P}]\label{th B}
     Let $E/A$ be an $A$-finite Anderson module of rank $1$ such that $\sigma  N_E(L)\subseteq (t-\theta)N_E(L)$. Then
     \begin{enumerate}
         \item $L_P(E/A)=0$ if and only if
    $\exp_E:\LieE(K_\infty)\rightarrow E(K_\infty)$
    is not injective.
         \item The order of vanishing of $L_P(\widetilde{E}/\widetilde{A})$ at $z=1$ does not depend on $P$.
     \end{enumerate}
\end{theoremA}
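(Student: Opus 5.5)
We deduce both parts from Theorem \ref{th A}, specialised to $F=K$, $\OF=A$. The key input is that for $E$ of rank $1$ the lattices $\LieE(A)\cap W$ and $U(E/A)\cap W$ have rank $1$ (and likewise for the twists over $\widetilde A$). The plan is to make the $P$-adic covolume ratio explicit in that case and read off both statements.

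\emph{Step 1: computing the $P$-adic ratio.} Let $v$ be a generator of $\LieE(A)\cap W$ and $u$ a generator of $U(E/A)\cap W$. Since $u\in \LieE(K_\infty)$ and $\exp_E(u)\in E(A)$, the definition of $[\cdot:\cdot]_{A,P}$ expresses the ratio as a $P$-adic quantity attached to $\exp_E(u)$. Concretely, we expect it to be the ratio of the $P$-adic logarithm $\log_{E,P}$ of a suitable multiple $E_a(\exp_E(u))$, landing in the $P$-adic domain of convergence, to $v$, divided by the factor coming from $a$. Theorem \ref{th A} then reads
\[L_P(E/A)=c\cdot \log_{E,P}\bigl(E_a(\exp_E(u))\bigr)\quad\text{with } c\in K_P^\times .\]
We do the same for $\widetilde E$, with $\exp_{\Et}(\widetilde u)$ and $\widetilde{K}^*$ in place of $K^*$. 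The twisted identity specialises at $z=1$ to the untwisted one, because everything has no $P$-adic pole at $z=1$.

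\emph{Step 2: part (1).} The $P$-adic value vanishes exactly when $\log_{E,P}$ kills $E_a(\exp_E(u))$, which means this element is $E$-torsion $P$-adically. An element of $E(A)$ that is torsion in $E(K_P)$ is torsion in $E(A)$. Since $E(A)$ is finitely generated with finite torsion, after enlarging $a$ the condition becomes $\exp_E(bu)=0$ for some $b\neq 0$, i.e.\ $bu$ is a nonzero element of $\ker\exp_E$. Conversely, if $\exp_E$ is not injective on $\LieE(K_\infty)$, a nonzero period $\omega$ lies in $U(E/A)$. For rank $1$ with the hypothesis $\sigma N_E(L)\subseteq (t-\theta)N_E(L)$, we must check that $\omega$ can be chosen in $W$. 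Then $u$ is proportional to it over $A$, so the log vanishes.

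\emph{Step 3: part (2).} The $z$-twisted element $\exp_{\Et}(\widetilde u)$ is a polynomial, or rational, in $z$ with coefficients in $A$, and it is independent of $P$. The order of vanishing of $L_P(\widetilde E/\widetilde A)$ at $z=1$ is then the order of vanishing of $\log_{\Et,P}$ of this element. The plan is to show this order equals the largest $m$ such that $\exp_{\Et}(\widetilde u)$ is $\Et$-torsion modulo $(z-1)^m$. We do this by expanding in $z-1$ and noting that the $P$-adic logarithm is injective modulo torsion on each $(z-1)$-adic graded piece. Since the torsion condition is purely algebraic over $A[z]$, the order does not depend on $P$.

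The main obstacle is Step 3. Controlling the $(z-1)$-adic expansion of $\log_{\Et,P}$, in particular its injectivity modulo torsion on each graded piece, needs care over $\widetilde{K_P}$ localised at $z=1$. I would first attempt it by applying Step 2 to the Anderson modules over $A[z]/(z-1)^m$ obtained by truncation.
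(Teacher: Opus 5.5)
The forward implication in (1) is the paper's argument. Rank one gives $U(E/A)\cap W=\partial_E(A)u$, the reduced formula turns $L_P(E/A)=0$ into $\Log_{E,P}(\exp_E(u))=0$, and the kernel of $\Log_{E,P}$ on $A^n$ is the torsion (Proposition~\ref{prop : kernel Log torsion points}; you assert this step, and it needs that input). You do not need finite generation of $E(A)$, which is false in general, already for the Carlitz module. If $E_a(\exp_E(u))$ is torsion, some $E_c$ kills it, and $\partial_E(ca)u$ is a nonzero period.

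\textbf{Gap in the converse of (1).} Your converse needs every period $\omega\in\LieE(K_\infty)$ to lie in $W$. You only flag this, and neither the reduced formula nor the definition of $W$ gives it. The paper does not go through $W$ here. It cites \cite[Theorem 5.2]{lucas_P-adic_2026}, which uses the \emph{full} $P$-adic class formula, whose regulator is $\det_{(b_i)}(\Log_{E,P}(\exp_E(u_i)))$ for an $A$-basis $(u_1,\dots,u_n)$ of all of $U(E/A)$. A nonzero period is $\omega=\sum_i\partial_E(c_i)u_i$ with $c_i\in A$ not all zero. Then $\sum_i\partial_E(c_i)\Log_{E,P}(\exp_E(u_i))=\Log_{E,P}(\exp_E(\omega))=0$ is a nontrivial $K_P$-linear relation among the columns, so the determinant vanishes.

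\textbf{Gap in (2).} Your target is correct and is what the paper's proof establishes. Namely, $\ord_{z=1}L_P(\Et/\widetilde A)$ is the largest $k$ such that $Y_c:=\Et_c(\expEt(\widetilde u))\equiv 0\pmod{(z-1)^k}$ for some nonzero $c\in A$, and this condition is independent of $P$. You leave the proof open, and truncated modules over $A[z]/(z-1)^k$ are unnecessary. The missing observation is that $\tau$ fixes $z$. Hence $\Et_b$ and $\Log_{\Et,P}$ are $\F_q[z]$-linear and specialise at $z=1$ to $E_b$ and $\Log_{E,P}$.
\begin{itemize}
\item Suppose $Y_c=(z-1)^kY'$ with $Y'\in A[z]^n$. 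Then $\Log_{\Et,P}(Y_c)=(z-1)^k\Log_{\Et,P}(Y')$.
\item The $k$-th Taylor coefficient of this at $z=1$ is $\Log_{E,P}(Y'(1))$, with $Y'(1)\in A^n$. It vanishes iff $Y'(1)$ is torsion.
\item If $E_b(Y'(1))=0$, then $Y_{bc}=(z-1)^k\Et_b(Y')\equiv 0\pmod{(z-1)^{k+1}}$.
\end{itemize}
Induction on $k$ then gives the characterisation. The paper reaches the same point through the Leibniz rule for $\D_z^{(k)}$.

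\textbf{Another missing input for (2).} ``No pole at $z=1$'' is not enough. Let $\widetilde g$ be defined by $\Log_{\Et,P}(\expEt(\widetilde u))=\partial_E(\widetilde g)\widetilde b$. To identify the order of $\widetilde g$ with that of $L_P(\Et/\widetilde A)$, the twisted factor $\widetilde\alpha$ must have neither a zero nor a pole at $z=1$, and $\widetilde u,\widetilde b$ must specialise to nonzero elements. Theorem~\ref{th A}, which is stated only modulo $\widetilde K^*$, does not give this. It comes from Lemma~\ref{lem : ev(U cap W) is lattice} and Propositions~\ref{prop : class formula mod K integral level} and~\ref{prop : P-adic class formula integral level}, via comparison with the $\infty$-adic formula and the local factor $Z_P$.
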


In \cite[Theorem 5.2]{lucas_P-adic_2026}, Lucas shows that for any Anderson module $E$, if the exponential $\exp_E$ is not injective, then $L_P(E/A)=0$. He then conjectures that the converse implication is also true. The first part of Theorem \ref{th B} gives a positive answer to this conjecture for a family of Anderson modules.

In the same paper, Lucas also rewrites in the language of Anderson modules a conjecture of Caruso and Gazda (\cite{caruso_computation_2025}), which was originally stated in the language of Anderson $t$-motives. The conjecture claims that for any Anderson module $E$, the order of vanishing at $z=1$ of $L_P(\widetilde{E}/\widetilde{\OF})$ does not depend on $P$. Thus, the second part of Theorem \ref{th B} gives a positive answer to this conjecture for a family of Anderson modules.

Let us go back to our original example, the Carlitz zeta values. The $P$-adic Carlitz zeta values are defined as
$$\zeta_P(n):= \sum_{d\geq 0}\sum_{\substack{a\in A_{+,d}\\ P\nmid a}} \frac{1}{a^n},\qquad n\in \N,$$
where $A_{+,d}$ denotes the set of monic polynomials of degree $d$ in $A$. It is a well-known fact that
$$\zeta_P(n)=0 \iff q-1 \mid n.$$
Yu actually proved in \cite{yu_transcendence_1991} that $\zeta_P(n)$ is transcendental whenever it is nonzero. If $q-1\mid n$, a natural follow-up question would be to compute the order of vanishing of
$$\zeta_P(n,z):= \sum_{d\geq 0}\sum_{\substack{a\in A_{+,d}\\ P\nmid a}} \frac{1}{a^n}z^d,\qquad n\in \N,$$
at $z=1$. 

As in the $\infty$-adic case, we have $L_P(\Cn/A)=\zeta_P(n)$ and $L_P(\widetilde{\Cn}/\widetilde{A})=\zeta_P(n,z)$.  Theorem \ref{th B} then tells us that the order of vanishing at $z=1$ of $\zeta_P(n,z)$ does not depend on $P$. Moreover, since Diaz-Vargas and Polanco-Chi proved in \cite{diaz-vargas_riemann_2016} that $z\mapsto \zeta_\theta(n,z)$ only has simple zeros, we can give a definite answer to the question of the order of vanishing of $\zeta_P(n)$.
\begin{theoremA}[Theorem \ref{th : order vanishing 1 riemann hyp}]
   If $q-1 \mid  n$, then 
    $$\ord_{z=1}\zeta_P(n,z)=1.$$
\end{theoremA}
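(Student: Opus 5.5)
The plan is to reduce to the prime $P=\theta$ and then use the result of Diaz-Vargas and Polanco-Chi. By Theorem \ref{th B}(2), applied to $E=\Cn$, the order of vanishing at $z=1$ of $L_P(\widetilde{\Cn}/\widetilde{A})=\zeta_P(n,z)$ does not depend on $P$. Before invoking it, I will check its hypotheses for $\Cn$. The module $\Cn$ is defined over $A$ and is $A$-finite. Its $t$-motive $N_{\Cn}(L)$ is free of rank $1$, with $\sigma$ acting as multiplication by $(t-\theta)^n$ up to the standard normalisation. Hence $\sigma N_E(L)\subseteq (t-\theta)N_E(L)$ holds for $n\geq 1$.

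This gives $\ord_{z=1}\zeta_P(n,z)=\ord_{z=1}\zeta_\theta(n,z)$ for every monic irreducible $P$, so it suffices to treat $P=\theta$.

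For $P=\theta$, two things are needed.

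First, the order is at least $1$. Since $q-1\mid n$, we have $\zeta_\theta(n,1)=\zeta_\theta(n)=0$, which is the classical fact recalled above.

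Second, the order is at most $1$. Diaz-Vargas and Polanco-Chi \cite{diaz-vargas_riemann_2016} show that $z\mapsto\zeta_\theta(n,z)$ has only simple zeros. To apply this, I need their object to match ours. Their function is the Goss-type interpolation $\sum_d z^d\sum_{a\in A_{+,d},\,\theta\nmid a}a^{-n}$, viewed as an entire function of $z$ with coefficients in $K_\theta$. Our $\zeta_\theta(n,z)$ is the same series, and its evaluation at $z=1$ converges $\theta$-adically. So the zero at $z=1$ is a zero of their function, and it is simple. Together the two facts give order exactly $1$ at $P=\theta$.

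I expect the comparison of settings in this last step to be the main obstacle. The order of vanishing in Theorem \ref{th B} is measured for $L_P(\widetilde{E}/\widetilde{A})$ seen as a $P$-adic function of $z$ near $z=1$. The Diaz-Vargas–Polanco-Chi statement concerns zeros in $\mathbb{C}_\theta$ of an entire power series. One must check that these two notions of order agree. Concretely, I will show that $\zeta_\theta(n,z)$ is an entire function in $z$ with $\theta$-adic coefficients tending to $0$ fast enough, since the degree-$d$ sums vanish $\theta$-adically to high order. Its Taylor expansion at $z=1$ then computes the order in both senses, and simplicity means the derivative at $z=1$ is nonzero.

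If a direct citation does not cover $n$ divisible by $q-1$ at $z=1$, I would fall back on the alternative route. Theorem \ref{th A} expresses $\zeta_\theta(n,z)$, up to $\widetilde{K}^*$, as a rank-one $P$-adic regulator. I would then show that its first $z$-derivative at $1$ is nonzero via the $P$-adic logarithm of a generator of $U(\widetilde{\Cn}/\widetilde{A})\cap\widetilde{W}$.
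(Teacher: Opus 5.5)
Your proposal is correct and is essentially the paper's proof: you reduce to $P=\theta$ using the $P$-independence of the order of vanishing (Theorem \ref{th : vanishing does not depend P}, whose hypotheses you rightly verify for $\Cn$ via $\sigma e_n^T=(t-\theta)^n e_n^T$), and then invoke the simple-zeros theorem of Diaz-Vargas and Polanco-Chi for $z\mapsto\zeta_\theta(n,z)$. For your fallback, the paper's own alternative is not a regulator computation but the case $s=0$ of Theorem \ref{th : order vanishing t1 ts}, where the substitution $\theta\mapsto 1/\theta$ rewrites $\zeta_\theta(n,z)$ as $(1-z)$ times an $\infty$-adically convergent series that is visibly nonzero at $z=1$.
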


Additionally, by doing some concrete computations on the twisted equation of Theorem \ref{th A} for $E=\Cn$, and using a special point recently found by Pellarin in \cite{pellarin_carlitz_2025}, we are able to derive a formula for the evaluation at $1$ of the derivative of $\zeta_P(n,z)$. 
Let
$$\zeta_P^{(1)}(n):=\left(\frac{d}{dz}\zeta_A(n,z)_P\right)|_{z=1}.$$ 
Set 
$$\mathcal{X}_l:= \left(-\binom{l}{n-i}(\theta-\theta^q)^{l-(n-i)}\right)_{1\leq i \leq n} \in \Mat_{n\times 1}(K),$$
$$e_n:=(0,0,\ldots,1)^T \in \Mat_{n\times 1}(K),$$
and let $\iota$ be the projection onto the last component. 
\begin{theoremA}[Theorem \ref{th : formula derivative Cn}]
    Suppose $n=l(q-1)$ with $l\in \N$. If $q\neq 2$, then
    $$\zeta_P^{(1)}(n) = \frac{(P^n-1)BC_n}{P^n\Gamma_n}\iota(\Log_{\Cn,P}(\mathcal{X}_l)).$$
    If $q=2$, then
    $$\zeta_P^{(1)}(n) = \frac{(P^n+1)BC_n}{P^n\Gamma_n}\iota(\Log_{\Cn,P}(\mathcal{X}_l+e_n)).$$
\end{theoremA}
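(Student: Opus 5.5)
We will derive the formula from the twisted identity of Theorem \ref{th A} applied to $E=\Cn$ over $\OF=A$, by differentiating in $z$ at $z=1$. Throughout, $n=l(q-1)$, so $\zeta_P(n,1)=\zeta_P(n)=0$.

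\textbf{Step 1: the reduced lattices for $\Cn$.} For $\widetilde{\Cn}$ the space $\widetilde W$ should be the line picked out by the last coordinate, i.e.\ the one carrying the zeta value. Then $\Lie_{\widetilde{\Cn}}(\widetilde A)\cap\widetilde W$ and $U(\widetilde{\Cn}/\widetilde A)\cap\widetilde W$ are $\widetilde A$-lattices of rank $1$. The $P$-adic ratio of covolumes is then the last coordinate of $\Log_{\widetilde{\Cn},P}$ evaluated at a generator of the unit lattice, divided by the generator of $\Lie\cap\widetilde W$. To get an identity rather than one modulo $\widetilde K^*$, I would not use the statement of Theorem \ref{th A} as written. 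Instead I would return to its proof, where the $\infty$-adic twisted identity holds on the nose: the Anderson--Thakur formula gives $\zeta(n,z)$ as $\frac{\Gamma_n^{-1}}{\ldots}$ times the last coordinate of $\log_{\widetilde{\Cn}}$ of an explicit special point $\widetilde{\mathcal X}(z)$. Here I would use Pellarin's special point from \cite{pellarin_carlitz_2025}, a vector of polynomials in $\theta$ and $z$, with Bernoulli--Carlitz factor $BC_n$.

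\textbf{Step 2: pass to the $P$-adic side.} In $K[[z]]$ we have $\zeta(n,z)=\zeta_P(n,z)\cdot(1-z^{\deg P}P^{-n})^{-1}$, the Euler factor at $P$. On the module side, the $P$-adic logarithm relates to the $\infty$-adic one through the action of $\widetilde{\Cn}_P$. Concretely, $\Log_{\widetilde{\Cn},P}(\widetilde{\mathcal X})$ equals $\widetilde{\Cn}_P$-torsion-corrected logarithms, i.e.\ $\frac{1}{P^n}\log(\widetilde{\Cn}_{P^{?}}(\widetilde{\mathcal X}))$ up to the factor $(P^n-z^{\deg P})$. This should produce
\begin{equation*}
\zeta_P(n,z)=\frac{(P^n-z^{d})BC_n(z)}{P^n\Gamma_n}\,\iota\bigl(\Log_{\widetilde{\Cn},P}(\widetilde{\mathcal X}(z))\bigr),
\end{equation*}
an identity of functions $P$-adically analytic near $z=1$. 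I would verify it by comparing coefficients in $K[[z]]$, exactly as in the proof of Theorem \ref{th A}.

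\textbf{Step 3: differentiate at $z=1$.} Since the left side vanishes at $z=1$, one factor on the right vanishes there. For $q\neq 2$, I expect $\iota(\Log_{\Cn,P}(\widetilde{\mathcal X}(1)))=0$, because $\widetilde{\mathcal X}(1)$ is a torsion point (this is Lucas's mechanism: $\exp$ is non-injective). The derivative then falls on $\Log$. Using $\frac{d}{dz}$ of the twisted logarithm at a torsion point, together with the structure of Pellarin's point, gives $\frac{d}{dz}\iota\Log_{\widetilde{\Cn},P}(\widetilde{\mathcal X}(z))|_{z=1}=\iota\Log_{\Cn,P}(\mathcal X_l)$, where $\mathcal X_l$ is the $z$-derivative of the special point. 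The binomial entries $-\binom{l}{n-i}(\theta-\theta^q)^{l-(n-i)}$ come from expanding $(\theta-\theta^q z)^{l}$-type terms. For $q=2$ the Euler factor sign and an extra contribution differ: $(1-z^d/P^n)$ becomes $(P^n+1)$ via characteristic $2$, and the derivative of the remaining factor contributes the additional $e_n$.

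\textbf{Main obstacle.} The hardest step is the derivative of $z\mapsto\Log_{\widetilde{\Cn},P}(\widetilde{\mathcal X}(z))$ at $z=1$. The twist also changes the module, not just the point. The approach I would try first is to write $\Log_{\widetilde{\Cn},P}=\frac{1}{m}\Log(\widetilde{\Cn}_m\cdot)$ for a suitable $m$ killing the torsion point. Then $\widetilde{\Cn}_m(\widetilde{\mathcal X}(z))$ vanishes at $z=1$ to first order, so its derivative is a genuine point. The $\Log$ of that derivative is linear to first order, and I would identify it with $\mathcal X_l$ (respectively $\mathcal X_l+e_n$) by direct computation with the explicit tensor power matrices.
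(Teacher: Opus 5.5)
Your overall architecture matches the paper's. You build an exact twisted identity from the Anderson--Thakur point and Pellarin's point, note that the Euler factor at $P$ is a unit at $z=1$, and differentiate at $z=1$ using that $J_l$ is torsion. But Step 3, as you state it, is false.

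\textbf{Step 3 and the identity to differentiate.} Since $\widetilde{J_l}|_{z=1}=J_l$ is a \emph{nonzero} torsion point, the $z$-derivative of $\Log_{\widetilde{\Cn},P}(\widetilde{J_l})$ at $z=1$ contains the term $(\D_z\Log_{\widetilde{\Cn},P})(J_l)$, and this term need not vanish. For example, if $l<q$ then every $c_{l,i}$ has degree $<n$, so $\widetilde{J_l}=J_l$ is independent of $z$. Your formula would then give $\zeta_P^{(1)}(n)=0$, contradicting $\ord_{z=1}\zeta_P(n,z)=1$. The fix is the one in your last paragraph, with $m=\theta-\theta^q$, and it must replace Step 3 rather than supplement it. Work from the start with $\widetilde{\Cn_{\theta-\theta^q}}(\widetilde{J_l})=\exp_{\widetilde{\Cn}}(\partial(\theta-\theta^q)\widetilde{j_l})$. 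This point vanishes at $z=1$, so the derivative of its $\Log$ there is $\Log_{\Cn,P}$ of its derivative. Moreover, $\mathcal{X}_l$ is the derivative of \emph{this} point, not of Pellarin's point. The exact identity to differentiate is
\[ \zeta_P(n,z)=\frac{(P^n-z^{\deg P})BC_n}{P^n\Gamma_n}\,\iota\bigl(\Log_{\widetilde{\Cn},P}(\widetilde{\Cn_{\theta-\theta^q}}(\widetilde{J_l}))\bigr). \]
It follows from three facts:
\begin{itemize}
\item Pellarin's relation $h_{n,i}=BC_n(\theta-\theta^q)c_{l,i}$, i.e.\ $\widetilde{\mathfrak{z}_n}=\partial(BC_n(\theta-\theta^q))\widetilde{j_l}$;
\item $\Log_{\widetilde{\Cn},P}(\exp_{\widetilde{\Cn}}(x))=x$ coefficientwise in $K[[z]]$;
\item $\zeta_P(n,z)=Z_P(\widetilde{\Cn}/\widetilde{A})^{-1}\zeta(n,z)$.
\end{itemize}
Neither $\widetilde W$ nor $\widetilde b$ is needed; in the paper $\iota(\widetilde b)$ cancels between $\widetilde g$ and $\widetilde\alpha$. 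The Euler factor enters once, through $Z_P$. No second factor comes from comparing $P$-adic and $\infty$-adic logarithms ``through $\widetilde{\Cn}_P$'', and there is no $BC_n(z)$. If you keep $\widetilde{J_l}$ as your point, the constant must be $BC_n(\theta-\theta^q)$, and that factor is exactly what cancels your $1/m$.

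\textbf{The derivative computation.} What is still missing is the computation of this derivative, the one step specific to this statement. Your heuristic that the binomials come from expanding $(\theta-\theta^q z)^l$ is not the mechanism. For $q\neq2$, every $a=(\theta-\theta^q)c_{l,i}$ has $\deg_\theta a<2n$. For such $a$ at most one $\tau$ acts nontrivially on $xe_n$, so $\widetilde{\Cn_a}(xe_n)=\partial(a)xe_n+z\bigl(\Cn_a(xe_n)-\partial(a)xe_n\bigr)$. Apply this with $x=\theta^{qi}$ and sum over $i$. The derivative at $z=1$ is then $\Cn_{\theta-\theta^q}(J_l)-\sum_i\partial((\theta-\theta^q)c_{l,i})\theta^{qi}e_n$. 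The first term vanishes, which uses the torsion property a second time. The $(n-r)$-th entry of the sum is
\[ \D_\theta^{(r)}\bigl((\theta-Y)^l-(\theta^q-Y)^l\bigr)|_{Y=\theta^q}=\binom{l}{r}(\theta-\theta^q)^{l-r}. \]
This uses $\binom{a}{b}\binom{a-b}{c}=\binom{a}{c}\binom{a-c}{b}$ and $\sum_i(-1)^i\binom{l}{i}\binom{qi}{r}\equiv 0\pmod p$ for $r<ql$.

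\textbf{The case $q=2$.} The vanishing at $z=1$ holds for every $q$, not only for $q\neq2$. Also, $P^n+1$ is just $P^n-1$ in characteristic $2$, so the Euler factor is unchanged. The extra $e_n$ comes from the term $i=0$. There $\deg_\theta\bigl((\theta-\theta^q)c_{l,0}\bigr)=\deg_\theta(\theta^l-\theta^{ql})=2n$, so $\widetilde{\Cn_{(\theta-\theta^q)c_{l,0}}}(e_n)$ acquires a $z^2e_n$ term from two applications of $\tau$. Its contribution to the derivative at $z=1$ is the additional $e_n$.
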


Next, we use Theorems \ref{th A} and \ref{th B} to obtain similar results for a larger family of $P$-adic $L$-series. In Section \ref{sec : Pellarin L-series}, we devote ourselves to the study of the $P$-adic version of Pellarin $L$-series, which where introduced by Pellarin in \cite{pellerin_values_2012}. Let $t_1,\ldots,t_s$ be new variables, and let $\chi_{t_i}:A\rightarrow \F_q[t_1,\ldots,t_s]$ be the $\F_q$-algebra morphism defined by $\chi_{t_i}(\theta)=t_i$. Define the corresponding $P$-adic Pellarin $L$-series as
$$L_P(\chi_{t_1},\ldots,\chi_{t_s},n,z):=\sum_{d\geq 0}\sum_{\substack{a\in A_{+,d} \\ P\nmid a}}\frac{\chi_{t_1}(a)\ldots\chi_{t_s}(a)}{a^n} z^d,$$
$$L_P(\chi_{t_1},\ldots,\chi_{t_s},n) := L_P(\chi_{t_1},\ldots,\chi_{t_s},n,1).$$
 These $L$-series turn out to be the $L$-series corresponding to a certain Anderson module $E_\alpha$, which can be thought of as a multivariable twist of $\Cn$. Since $E_\alpha$ also satisfies the hypothesis of Theorem \ref{th B}, we have that $L_P(\chi_{t_1},\ldots,\chi_{t_s},n)$ vanishes if and only if the exponential of $E_\alpha$ is not injective (which happens if and only if $n\equiv s \pmod{q-1}$), and the order of vanishing of $L_P(\chi_{t_1},\ldots,\chi_{t_s},n,z)$ at $z=1$ does not depend on $P$. By some concrete computations in the case $P=\theta$, we are able to obtain the following result.  

\begin{theoremA} [Corollary \ref{cor : vanishing LP t1 ts} and Theorem \ref{th : order vanishing t1 ts}]
    We have that
    $$L_P(\chi_{t_1},\ldots,\chi_{t_s},n)=0\quad \text{ if and only if } \quad n\equiv s \pmod{q-1}.$$
    Moreover, if $n\equiv s \pmod{q-1}$, then 
        $$\ord_{z=1}L_P(\chi_{t_1},\ldots,\chi_{t_s},n,z) = 1.$$
\end{theoremA}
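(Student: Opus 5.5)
The statement has two parts: the vanishing criterion, and the order of vanishing at $z=1$. I will treat them separately, both through Theorem \ref{th B} applied to the Anderson module $E_\alpha$ whose $P$-adic $L$-series is $L_P(\chi_{t_1},\ldots,\chi_{t_s},n)$.

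\emph{Vanishing criterion.} First I will check that $E_\alpha$ (over the base ring $A[t_1,\ldots,t_s]$, viewed as an $A$-finite module of rank $1$ in the appropriate sense) satisfies the hypothesis $\sigma N_E(L)\subseteq (t-\theta)N_E(L)$. Concretely, I will write down the $t$-motive of $E_\alpha$, which should be the $n$-th tensor power of the Carlitz motive twisted by $\prod_i (t_i-t)$, and verify the inclusion directly. Part 1 of Theorem \ref{th B} then reduces the vanishing of $L_P(\chi_{t_1},\ldots,\chi_{t_s},n)$ to the non-injectivity of $\exp_{E_\alpha}$ on $\Lie_{E_\alpha}(K_\infty)$. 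For that I will use the known description of the kernel of the exponential of these Pellarin-type modules, as in Anglès--Pellarin--Tavares-Ribeiro. The kernel is generated by a combination of $\tilde\pi^n$ and the Anderson--Thakur function $\omega(t_1)\cdots\omega(t_s)$. This element lies in $\Lie_{E_\alpha}(K_\infty)$ exactly when the sign condition is compatible, i.e. when $\tilde\pi^{n}/\prod\omega(t_i)$ has $q-1$-st root of unity issues cancelling. That happens precisely when $n\equiv s \pmod{q-1}$.

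\emph{Order of vanishing.} By part 2 of Theorem \ref{th B}, the order of vanishing at $z=1$ is independent of $P$, so it suffices to take $P=\theta$. In that case $P\nmid a$ means $a(0)\neq 0$, and I will compute the series explicitly. I will write
$$L_\theta(\chi_{t_1},\ldots,\chi_{t_s},n,z)=\sum_d z^d S_d,$$
with $S_d$ the power sum over monic $a$ of degree $d$ with nonzero constant term. Expanding $a=\theta^d+\cdots+a_0$ and using Carlitz/Thakur-type vanishing of power sums $\sum_{c\in\F_q}c^k$, I expect the following: for $d$ large relative to the $\theta$-adic precision, $S_d\to 0$, and the series has the form $(1-z)\cdot G(z)$. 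Here the factor $1-z$ comes from summing over the constant term $a_0\in\F_q^*$ with the weight $\chi_{t_i}(a)/a^n$ homogeneous of degree $s-n\equiv 0$.

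It then remains to show $G(1)\neq 0$. I would try to find the leading $\theta$-adic term of $\frac{d}{dz}L_\theta|_{z=1}=-G(1)$, namely the coefficient at the lowest power of $\theta$ (or of the $t_i$). This is done by the substitution $a=a_0(1+\theta b/a_0)$, expanding $a^{-n}$ as a binomial series in $\theta$, and isolating the $d=1$ contribution. The contribution should be a nonzero element such as $-\sum_{c\ne 0} c^{-n}\prod t_i\cdot(\cdots)$, giving a nonzero unit times a monomial in $t_i$.

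An alternative for this step is to specialize $t_i\mapsto \theta^{q^{k_i}}$, which reduces to Carlitz zeta twists. The simple-zero result of Diaz-Vargas--Polanco-Chi, as used in Theorem \ref{th : order vanishing 1 riemann hyp}, then forces the order to be at most $1$. Since it is at least $1$ by the first part, it equals $1$. I consider this specialization route more robust, but it requires that specialization commutes with the order of vanishing and preserves the congruence $n\equiv s$; choosing $\sum_i q^{k_i}$ appropriately makes $n-\sum q^{k_i}$ divisible by $q-1$. The main obstacle is exactly this nonvanishing of the derivative, either through the explicit $\theta$-adic computation or by justifying the specialization argument.
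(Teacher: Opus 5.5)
Your treatment of the vanishing criterion (checking the hypothesis for $E_\alpha$, the kernel $\partial(A_s)\pi_n/\omega_\alpha$ of $\exp_{E_\alpha}$, and its rationality exactly when $n\equiv s \pmod{q-1}$) agrees with the paper, as does the reduction of the order statement to $P=\theta$. The gap is the one non-formal step. You never establish $\frac{d}{dz}L_\theta(\chi_{t_1},\ldots,\chi_{t_s},n,z)|_{z=1}\neq 0$, and neither route you sketch works as written.

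\emph{The direct route.} The factorization $L_\theta=(1-z)G$ is automatic in the Tate algebra once the value at $z=1$ vanishes, so your heuristic for it is not needed. The real content is $G(1)\neq 0$, and there you only guess. The $\theta^0$-coefficient of $G(1)$ is $\sum_{d\le\lfloor s/(q-1)\rfloor}R_d(s)$, whose constant term is $1$. So the decisive term is the constant monomial, not a unit times $\prod_i t_i$. You also give no reason why the contributions of the other degrees $d$ cannot cancel it.

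\emph{The specialization route.} Sending $t_i\mapsto\theta^{q^{k_i}}$ turns the series into $\zeta_P(m,z)$ with $m=n-\sum_i q^{k_i}$. The congruence $q-1\mid m$ is automatic, but positivity is not. Since $\sum_i q^{k_i}\geq s$, you get $m<0$ whenever $n<s$ (e.g.\ $n=1$, $s=q$), where Theorem \ref{th : order vanishing 1 riemann hyp} says nothing. For $n=s$ you get $\zeta_P(0,z)=1-z^{\deg P}$, which has a multiple zero when $p\mid\deg P$. So this route only covers $n>s$, plus $n=s$ at $P=\theta$.

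\emph{What the paper does.} It uses the inversion $\Psi:K_{s,\theta}\to K_{s,\infty}$, $\theta\mapsto 1/\theta$. The polynomial $\theta^d a(1/\theta)$ has constant term $1$, and the congruence $n\equiv s$ lets you rescale it to a monic polynomial prime to $\theta$. The sum over those equals the full sum times the Euler factor $1-\chi_s(\theta)\chi'_s(\theta)z=1-z$; this, not the summation over $a_0$, is where $1-z$ comes from. Hence $\Psi(L_\theta)=(1-z)\sum_{d}\sum_{a\in A_{+,d}}\chi'_s(a)a^{-n}(\chi_s(\theta)\theta^n z)^d$. The sum converges at $z=1$ by Lemma \ref{lem : L chi entire}, and its leading $\infty$-adic coefficient $\sum_{d\le\lfloor s/(q-1)\rfloor}R_d(s)$ is nonzero by Lemma \ref{lem : character sums t1 ts}.

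\emph{A cheaper fix of your own route.} Write $L_\theta=\sum_d S_d z^d$ with $S_d\in\F_q[t_1,\ldots,t_s][[\theta]]$ and $S_d\to 0$ $\theta$-adically. Apply to $\sum_d dS_d$ the continuous ring map $\F_q[t_1,\ldots,t_s][[\theta]]\to\F_q$ sending $\theta$ and all $t_i$ to $0$. Each summand $\chi_s(a)a^{-n}$ becomes $a(0)^{s-n}=1$. The image is therefore $\sum_{d\geq 1}d\,(q-1)q^{d-1}$, which in $\F_q$ reduces to its $d=1$ term, $\sum_{c\in\F_q^*}c^{s-n}=-1$. This proves the derivative is nonzero without the inversion or the character-sum lemma.
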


Finally, in Section \ref{sec : Dirichlet characters} we study the $P$-adic version of Dirichlet-Goss $L$-series (see \cite[Chapter 8]{goss_basic_1996}). A Dirichlet character $\chi$ of type $s$ is a product
$$\chi=\chi_{\eta_1}\ldots\chi_{\eta_s},$$ 
where for all $i$, $\eta_i\in \overline{\F_q}$, and $\chi_{\eta_i}:A \rightarrow \overline{\F_q}$ is the $\F_q$-algebra morphism given by $\chi_{\eta_i}(\theta)=\eta_i$. Define the corresponding $P$-adic Dirichlet-Goss $L$-series as
$$L_P(n,\chi,z):=\sum_{d\geq 0}\sum_{\substack{a\in A_{+,d}\\P\nmid a}}\frac{\chi(a)}{a^n}z^d,$$
$$L_P(n,\chi):=L_P(n,\chi,1).$$
Again, by Theorem \ref{th B} we have that the order of vanishing does not depend on $P$. In this case, however, the concrete computations in the case $P=\theta$ only allow as to give a satisfactory answer for \textit{almost all} Dirichlet characters.

\begin{theoremA}[Corollary \ref{cor : vanishing L Dirichlet} and Theorem \ref{th : ord vanishing dirichlet characters}]
Let $\chi$ be a Dirichlet character of type $s$. Then 
    $$L_P(n,\chi)=0 \quad \text{ if and only if } \quad n\equiv s \pmod{q-1}.$$
    Moreover, if $s\in \N$ such that $n\equiv s \pmod{q-1}$, then for \textit{almost all} Dirichlet characters $\chi$ of type $s$, we have
    $$\ord_{z=1}L_P(n,\chi,z) = 1.$$
    If $s<q-1$, then this is true for all characters of type $s$.
\end{theoremA}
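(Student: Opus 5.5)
Both assertions will be derived from Theorem \ref{th B}, applied to a suitable Anderson module $E_\chi$ attached to the character, followed by an explicit computation at the single prime $P=\theta$. Let $\F=\F_q(\eta_1,\ldots,\eta_s)$ and set $\chi=\chi_{\eta_1}\cdots\chi_{\eta_s}$. I would first specialise the multivariable module $E_\alpha$ of Section \ref{sec : Pellarin L-series} along $t_i\mapsto \eta_i$. This gives an $A$-finite Anderson module $E_\chi$ over $\F\otimes A$ (of "rank $1$" over $\F\otimes A$) whose local factors at $Q$ are $\bigl(1-\chi(Q)Q^{-n}\bigr)^{-1}$, so that $L_P(E_\chi)=L_P(n,\chi)$ and $L_P(\widetilde{E_\chi})=L_P(n,\chi,z)$. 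The condition $\sigma N_E(L)\subseteq (t-\theta)N_E(L)$ is inherited from $E_\alpha$, since it is a property of the $t$-motive that is preserved under the specialisation $t_i\mapsto\eta_i$. Theorem \ref{th B} is stated over $A$; if necessary I would extend scalars to $\F$, which is harmless because $\F/\F_q$ is finite and all the constructions are $\F$-linear.

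For the vanishing criterion, part 1 of Theorem \ref{th B} reduces the statement to deciding when $\exp_{E_\chi}$ fails to be injective on $\Lie_{E_\chi}(\F\otimes K_\infty)$. The kernel of the exponential is the period lattice $\Pi_\chi$ intersected with this real subspace. I would argue exactly as in Corollary \ref{cor : vanishing LP t1 ts}: the period is $\widetilde\pi^n$ times a product of specialised Anderson--Thakur $\omega$-functions $\omega(\eta_i)$. The sign group $\F_q^\times$ acts on this period through the character $\lambda\mapsto\lambda^{n-s}$, so the period is real, and $\exp$ is non-injective, exactly when $n\equiv s\pmod{q-1}$. I expect this to be routine once the Pellarin case is in place.

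For the order of vanishing, part 2 of Theorem \ref{th B} lets us take $P=\theta$, where
$$L_\theta(n,\chi,z)=\sum_{d}z^d\sum_{a\in A_{+,d},\,a(0)\neq 0}\chi(a)a^{-n}.$$
Since $z=1$ is already known to be a zero, it suffices to show that $\frac{d}{dz}L_\theta(n,\chi,z)|_{z=1}\neq 0$. I would write the $d$-th coefficient as a power sum $S_d(n,\chi)$ minus its $\theta$-divisible part. Expanding in the $\theta$-adic topology and reducing modulo $\theta$ (or modulo a suitable power of $\theta$), the leading term becomes a polynomial expression in $\eta_1,\ldots,\eta_s$ with $\F_q$-coefficients, of the same type as the expression that gives order $1$ in the Pellarin case with $t_i=\eta_i$. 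In Theorem \ref{th : order vanishing t1 ts} this leading term is a nonzero polynomial $G(t_1,\ldots,t_s)$. Specialising, the derivative is nonzero whenever $G(\eta_1,\ldots,\eta_s)\neq 0$, and this holds for all $(\eta_i)$ outside the zero locus of $G$, that is, for almost all characters of type $s$. This is the main obstacle. A polynomial nonzero as a polynomial can still vanish at points of $\overline{\F_q}^s$, so the argument has to pass through an explicit nonvanishing leading coefficient rather than the formal one.

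For $s<q-1$, I would identify $G$ explicitly. Lucas-type digit computations for the power sums with $s<q-1$ should show that the first nonvanishing $\theta$-adic coefficient of the derivative is a constant in $\F_q^\times$, or a monomial in the $t_i$ times a unit. Such a term cannot vanish at $\eta_i\in\overline{\F_q}$ (for a monomial one also has to handle $\eta_i=0$ separately, by a direct computation), which gives the claim for all characters of type $s$.
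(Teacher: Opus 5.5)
\textbf{Verdict: genuine gap.} Your vanishing criterion and your reduction to $P=\theta$ via Theorem \ref{th B} match the paper. The gap is in the step that carries the real content, the nonvanishing at $P=\theta$. You assert that the $\theta$-adic leading term of $\frac{d}{dz}L_\theta(n,\chi,z)|_{z=1}$ is the specialisation of a nonzero polynomial $G$ ``of the same type'' as in the Pellarin case, but nothing in the proposal computes it. The decomposition you suggest (power sum $S_d(n,\chi)$ minus its $\theta$-divisible part) cannot be reduced modulo $\theta$ term by term, because the two pieces are in general not $\theta$-adically integral.

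Two things are missing: an explicit identification of the leading coefficient, and the character-sum input. Write $R_d(t_1,\ldots,t_s)=\sum_b\prod_i b(t_i)$, the sum running over $b\in\F_q[\theta]$ of degree at most $d$ with $b(0)=1$; this is the paper's $\chi_s(\theta)^d\sum_{a\in A_{+,d}}\chi_s'(a)$. You need $R_d=0$ for $d(q-1)>s$, together with $R_0=1$ and $R_d(0,\ldots,0)=0$ for $d\geq1$ (Lemma \ref{lem : character sums t1 ts}). This is what makes $G=\sum_{d\le s/(q-1)}R_d$ a finite, nonzero polynomial. It also gives $G=R_0=1$ at once when $s<q-1$. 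Your plan for that case (``Lucas-type digit computations'' yielding ``a constant or a monomial'') is a guess that does not find this mechanism. Finally, the issue you single out as the main obstacle is not one: \emph{almost all} is defined as the nonvanishing locus of a nonzero polynomial, so $G(\eta)\neq0$ is exactly what must be shown.

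The paper fills the gap with the isomorphism $\Psi:\theta\mapsto1/\theta$. Using $n\equiv s\pmod{q-1}$ to absorb leading coefficients, it gets $\Psi(L_\theta(n,\chi,z))=(1-z)\sum_d T_dz^d$, where $T_d=\chi(\theta)^d\sum_{a\in A_{+,d}}\chi'(a)(\theta^d/a)^n=R_d(\eta)+O(\theta^{-1})$. Hence the cofactor at $z=1$ equals $G(\eta)+O(\theta^{-1})$.

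Your mod-$\theta$ idea can also be completed, and more directly:
\begin{itemize}
\item For $a(0)\neq0$ one has $a^{-n}\equiv a(0)^{-n}=a(0)^{-s}\pmod\theta$.
\item Putting $b=a/a(0)$, the coefficient $c_d$ of $z^d$ satisfies $c_d\equiv\sum_b\chi(b)=R_d(\eta)-R_{d-1}(\eta)\pmod\theta$, the sum now running over $b$ of degree exactly $d$ with $b(0)=1$.
\item Telescoping, using $R_d=0$ for large $d$, gives $\frac{d}{dz}L_\theta(n,\chi,z)|_{z=1}=\sum_d d\,c_d\equiv-G(\eta)\pmod\theta$.
\end{itemize}
This variant never uses $\chi'$, so it also covers characters with some $\eta_i=0$, which the paper sets aside by assuming $\chi(\theta)\neq0$. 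In either form, though, the proof rests on this explicit computation and on the character-sum lemma, and neither appears in your proposal.
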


Using our methods, we have not been able to compute $\ord_{z=1}L_P(n,\chi,z)$ for all Dirichlet characters. It is then natural to ask the following question.

\begin{question}
    Let $n\equiv s \pmod{q-1}$. Does
    $$\ord_{z=1}L_P(n,\chi,z) = 1$$
    for \textit{all} Dirichlet characters of type $s$?
\end{question}

We note that a positive answer to this question was given by Anglès and Tavares-Ribeiro in \cite{angles_arithmetic_2021} for the case $n=1$.

\paragraph{Acknowledgments.} This paper was written as a part of the author's PhD thesis at the University of Caen Normandy, under the supervision of Bruno Anglès and Floric Tavares-Ribeiro. The author would like to thank them for their guidance, support, and valuable discussions during the preparation of this work.

%%%%%%%%%%%%%%%%%%%%%%%%%%%%%%%%%%%%%%%%%%%%%%%%%%%%%%%%%%%%%%%%%%%%%%%%%%%%%%%%%%%%%%%%%%%%%%%%%%%%%%%%%%%%%%%%%%
%%%%%%%%%%%%%%%%%%%%%%%%%%%%%%%%%%%%%%%%%%%%%%%%%%%%%%%%%%%%%%%%%%%%%%%%%%%%%%%%%%%%%%%%%%%%%%%%%%%%%%%%%%%%%%%%%%
%%%%%%%%%%%%%%%%%%%%%%%%%%%%%%%%%%%%%%%%%%%%%%%%%%%%%%%%%%%%%%%%%%%%%%%%%%%%%%%%%%%%%%%%%%%%%%%%%%%%%%%%%%%%%%%%%%
%%% BACKGROUND AND NOTATION 

\section{Background and notation} \label{sec : Background and notation}
We fix the following notation:
\begin{itemize}
    \item $\F_q$ is a finite field with $q$ elements, where $q$ is a power of a prime $p$;
    \item $A:=\F_q[\theta]$;
    \item $A_d$ is the set of polynomials of $A$ of degree $d$;
    \item $A_+$ is the set of monic polynomials of $A$;
    \item $A_{+,d}$ is the set of monic polynomials of $A$ of degree $d$;
    \item $K:=\F_q(\theta)$;
    \item $\infty$ is the place at infinity of $K$;
    \item $K_\infty:=\F_q((1/\theta))$ is the $\infty$-adic completion of $K$;
    \item $\overline{K_\infty}$ is a fixed algebraic closure of $K_\infty$;
    \item $\C_\infty$ is the completion of $\overline{K_\infty}$;
    \item $v_\infty$ denotes the valuation over $K$ associated to $\infty$, normalized such that $v_\infty(\theta)=-1$, which is uniquely extended to $\C_\infty$;
    \item $F/K$ is a finite extension of degree $m$;
    \item $\OF$ is the integral closure of $A$ in $F$;
    \item $F_\infty := K_\infty\otimes_K F$, which is a $K_\infty$-vector space of dimension $m$;
    \item $L\subset \C_\infty$ is the perfection of $F$;
    \item $\tau:x\mapsto x^q$ will denote the Frobenius map in various settings.
    
    \end{itemize}
Let $M\subset \C_\infty$ be any field containing $K$. An \textit{Anderson $t$-module} (which we might also call Anderson $A$-module, or Anderson module for short) $E$ defined over $M$ (we denote $E/M$) of dimension $n$ is a morphism of $\F_q$-algebras
\begin{align*}
    E  : A & \rightarrow \Mat_{n\times n}(M)\{\tau\} \\
    \theta & \mapsto E_\theta = \partial_E(\theta) + \sum_{i=1}^{D} E_{i}\tau^i
\end{align*}
where $(\partial_E(\theta)-\theta\I_n)^n=0$. Note that $E$ is completely determined by the image of $\theta$. The independent term of $E$ induces a morphism of $\F_q$-algebras 
$$\partial_E: A \rightarrow \Mat_{n\times n}(M).$$

Consider the non-commutative ring $L[t,\sigma]$, where
$$tc=ct,\quad t\sigma=\sigma t,\quad \sigma c=c^{1/q}\sigma,\quad c\in L.$$
A \textit{dual $t$-motive} $N(L)$ over $L$ is a left $L[t,\sigma]$-module which is free and finitely generated over $L\{\sigma\}$ and such that there exists an integer $n$ with $(t-\theta)^n N(L)\subseteq\sigma N(L)$. A dual $t$-motive is said to be \textit{$A$-finite} if it is also free and finitely generated over $L[t]$. The \textit{rank} of an $A$-finite Anderson module $E$ is the rank of $N(L)$ as an $L[t]$-module. 

To every Anderson module $E/L$ of dimension $n$, we can associate a dual $t$-motive $N_E(L)$ over $L$ in the following way: we set $N_E(L)=\Mat_{1\times n}(L\{\sigma\})$, which is naturally a free module over $L\{\sigma\}$ of rank $n$, and with $t$-action given by
$$t\cdot h := h\left(\partial_E(\theta)^T +  \sum_{i=1}^D \sigma^iE_{i}^T\right)\qquad \text{for every }h\in N_{E}(L),$$
where $\cdot^T$ denotes matrix transposition. We note that $t\sigma \cdot h=\sigma t\cdot h$, since $\sigma$ acts on the left and $t$ on the right. Anderson showed that $E\mapsto N_E(L)$ is an equivalence of categories (\cite[Theorem 4.4.1]{bockle_rapid_2020}). 

In the following, we will consider Anderson modules $E$ defined over $\OF$, meaning that $E_\theta \in \Mat_{n \times n}(\OF)\{\tau\}$. Given an $\OF$-algebra $B$, an Anderson module $E/\OF$ gives us two new $A$-module structures over $B^n$.
\begin{enumerate}
    \item One given by the full Anderson module: $$a \cdot h = E_a(h),\quad  a\in A, h\in B^n,$$ which we denote by $E(B)$.
    \item One given by the first term: $$a \cdot h = \partial_E(a)h,\quad a\in A, h\in B^n,$$ which we denote by $\Lie_E(B)$.
\end{enumerate}

If $B=F_\infty$, then one can show that the $A$-action on $\LieE(F_\infty)$ can be extended continuously to a $K_\infty$-action. Moreover, we have that $\dim_{K_\infty}\LieE(F_\infty)=mn$ (see \cite[Lemma 1.7]{fang_special_2015}).

We recall now some additional facts about Anderson modules. We refer to \cite{goss_basic_1996}, \cite{demeslay_class_2022} and \cite[Chapter 7]{angles_arithmetic_2021}  for more details.

An Anderson module $E/\OF$ has two associated series, 
$$\exp_E = \sum_{k\geq 0} Q_k\tau^k,\quad \log_E=\sum_{k\geq 0}P_k\tau^k\,\in \I_n + \tau \Mat_{n\times n}(F)\{\{\tau\}\},$$ that verify the following properties:
\begin{align*}
    \exp_E \partial_E(a) = E_a \exp_E, \quad &\forall \, a\in A, \\
    \log_E E_a = \partial_E(a)\log_E, \quad &\forall a\in A,\\
    \exp_E\log_E = \log_E \exp_E &= \I_n .
\end{align*}
We recall that $\exp_E$ converges everywhere on $\C_\infty^n$, thus inducing a morphism of $A$-modules
$$\exp_E :\Lie_E(\C_\infty) \longrightarrow E(\C_\infty).$$
The series $\log_E$, on the other hand, does not converge on all of $\C_\infty^n$. We define the \textit{module of units} of $E$ as
$$U(E/\OF):=\{x\in \Lie_E(F_\infty) : \exp_E(x)\in E(\OF)\}.$$

Let $V$ be a sub-$K_\infty$-vector space of $\Lie_E(F_\infty)$. We say that a free $A$-module $M\subset V$ is an \textit{$A$-lattice} in $V$ if it admits an $A$-basis that is also a $K_\infty$-basis of $V$. For example, the $A$-modules $\Lie_A(\OF)$ and $U(E/\OF)$ are $A$-lattices in $\Lie_E(F_\infty)$. 

If $M = \oplus A b_i$, $M'=\oplus Ab_i'$ are two $A$-lattices in $V$, then their \textit{ratio of covolumes} $[M:M']_A$ is defined as 
$$[M:M']_A := \frac{\det_{(b_i)}(b_i')}{\sgn(\det_{(b_i)}(b_i'))} ,$$
where the sign function in the denominator is defined as follows. If $x\in K_\infty$ can be written as $x=\sum_{i\geq N}x_i \theta^{-i}$ with $x_i\in \F_q$ and $x_N\neq 0$, then $\sgn(x)=x_N$. One can check that the definition of ratio of covolumes does not depend on the choice of basis.

We also define the \textit{class module} of $E$ as
$$H(E/\OF):= \frac{E(F_\infty)}{\exp_E(\Lie_E(F_\infty)) + E(\OF)}.$$
It is a finitely generated and torsion $A$-module. We will be mainly interested in the generator of its Fitting ideal. If $M$ is a finitely generated torsion $A$-module, $\textit{i.e.}$ if
$$M \simeq \frac{A}{f_1 A} \times \ldots \times \frac{A}{f_l A},\quad \text{ for some } f_i \in A_+,$$
then the generator of its Fitting ideal is
$$[M]_A:=f_1f_2\ldots f_l.$$

\begin{theorem}[Class formula]
    The infinite product
    $$L(E/\OF):= \prod_{P}\frac{\left[\Lie_E\left(\frac{\OF}{P\OF}]\right)\right]_A}{\left[E\left(\frac{\OF}{P\OF}\right)\right]_A},$$
    where $P$ runs trough all monic irreducible elements of $A$, converges in $K_\infty^*$. Moreover, we have
    $$L(E/\OF)=[\Lie_E(\OF) : U(E/\OF)]_A \cdot [H(E/\OF)]_A.$$
\end{theorem}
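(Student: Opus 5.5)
The class formula stated above is the theorem of Taelman, Fang and Demeslay. I would follow Demeslay's approach via $z$-deformation and nuclear operators; it adapts most directly to the twisted setting used later in the paper.

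\textbf{Twist and expansions.} Introduce the twisted module $\widetilde{E}$ over $\widetilde{\OF}=\F_q(z)\OF$, which is an Anderson module over the Tate algebra $\T_z(K_\infty)$.

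\begin{itemize}
\item The coefficients of $\exp_{\widetilde E}$ and $\log_{\widetilde E}$ are those of $\exp_E,\log_E$ multiplied by $z^k$. Hence $\log_{\widetilde E}$ converges on $\Lie_{\widetilde E}(\widetilde{F_\infty})$ near the origin.
\item In the twisted world $\exp_{\widetilde E}$ is an isomorphism from $\Lie_{\widetilde E}(\widetilde{F_\infty})$ onto $\widetilde E(\widetilde{F_\infty})$ modulo the appropriate lattices. As a consequence, the twisted class module $H(\widetilde E/\widetilde\OF)$ is trivial.
\item The twisted units $U(\widetilde E/\widetilde\OF)$ equal $\log_{\widetilde E}(\widetilde E(\widetilde\OF))$, which is an $\widetilde A$-lattice.
\end{itemize}

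\textbf{Twisted formula via the trace formula.} I would prove the twisted identity
$$L(\widetilde E/\widetilde\OF)=[\Lie_{\widetilde E}(\widetilde\OF):U(\widetilde E/\widetilde\OF)]_{\widetilde A}.$$

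\begin{itemize}
\item Write each local factor $[\Lie_E(\OF/P)]_A/[E(\OF/P)]_A$ as a determinant $\det_{A[Z]}(1+\Delta\mid \OF/P)$ evaluated at $Z=\theta^{-1}$-type variables. Here $\Delta=\widetilde E_\theta-\partial_E(\theta)$ is a locally contracting (nuclear) operator.
\item Use Taelman's/Anglès–Tavares-Ribeiro's trace formula to turn the Euler product into the infinite determinant of $1+\Delta$ on $\Lie_{\widetilde E}(\widetilde{F_\infty})/\Lie_{\widetilde E}(\widetilde\OF)$. This gives $L=\det(\exp_{\widetilde E}^{-1}$-type isomorphism$)$.
\item Identify that determinant with the ratio of covolumes. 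The map $\exp_{\widetilde E}$ sends one lattice onto the other up to the compact quotient, and it is an infinitesimally tangent-to-identity isomorphism. By the "$\det(\mathrm{id}+$nuclear$)=1$" lemma its determinant computes the covolume ratio.
\end{itemize}

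\textbf{Specialization to $z=1$.} Specialize the twisted formula at $z=1$. This is the main obstacle: the twisted units and the twisted regulator do not specialize naively, and the class module reappears.

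\begin{itemize}
\item Following Anglès–Ngo Dac–Tavares-Ribeiro, show that the sequence of $\T_z(K_\infty)$-modules formed by $U(\widetilde E)$, $\widetilde E(\widetilde\OF)$ and $H$ behaves well under evaluation.
\item Concretely, show that $\Lie_{\widetilde E}(\widetilde{F_\infty})/U(\widetilde E/\widetilde\OF)$ specialized at $z=1$ sits in an exact sequence $0\to H(E/\OF)\to\cdots$.
\item Taking Fitting ideals then produces the extra factor $[H(E/\OF)]_A$ together with $[\Lie_E(\OF):U(E/\OF)]_A$.
\end{itemize}

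\textbf{Convergence.} Convergence of the Euler product in $K_\infty^*$ follows separately: for $\deg P$ large, the local factor equals $1+O(\theta^{-\deg P})$. This is an estimate from the computation of $[E(\OF/P)]_A$ via the characteristic polynomial of Frobenius on the $t$-motive. As a result the product converges, and the limit is nonzero since every factor is.
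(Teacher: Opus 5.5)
The paper does not prove this statement. It cites Fang and Demeslay, who run Taelman's nuclear-operator argument on $E$ itself, so that $[H(E/\OF)]_A$ comes directly out of the exact sequence $0\to\LieE(F_\infty)/U(E/\OF)\xrightarrow{\exp_E}E(F_\infty)/E(\OF)\to H(E/\OF)\to0$, with no twist.

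Your route is different: pass through $\Et$, then specialize at $z=1$. This is the Angl\`es--Ngo Dac--Tavares-Ribeiro ``Stark units'' route. Its twisted half is a reasonable outline of what Taelman's machinery, extended by Demeslay to Tate-algebra coefficients, provides. It contains two slips:
\begin{itemize}
\item $U(\Et/\widetilde{\OF})=\log_{\Et}(\Et(\widetilde{\OF}))$ is false, because $\log_{\Et}$ does not converge on $\Et(\widetilde{\OF})$. Already $\log_{\widetilde{C}}(\theta^2)=\sum_k z^k\theta^{2q^k}/L_k$ diverges for the Gauss valuation.
\item $Z_P=1+O(\theta^{-\deg P})$ fails for Drinfeld modules of rank $r\geq 2$. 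In general only $v_\infty(Z_P-1)\geq \deg P/r$ holds, which still suffices for convergence.
\end{itemize}
The genuine gap is the specialization step. You flag it as the main obstacle but do not carry it out, and it cannot work as you describe it.

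Over $\widetilde{\OF}=\F_q(z)\OF$ the element $z-1$ is a unit. So $\LieEt(\widetilde{F_\infty})/U(\Et/\widetilde{\OF})$, like every $\F_q(z)$-module, becomes $0$ modulo $z-1$, and nothing can be read off at $z=1$ there. You must work integrally with $\T(F_\infty)$, $\OF[z]$, $U(\Et/\OF[z])$ and the integral class module $H(\Et/\OF[z]):=\Et(\T(F_\infty))/\bigl(\expEt(\LieEt(\T(F_\infty)))+\Et(\OF[z])\bigr)$. This module is nonzero in general and is where $H(E/\OF)$ hides. Three ingredients are missing.

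\textbf{(a) Finiteness of the integral class module.} $H(\Et/\OF[z])$ is a finitely generated torsion $\F_q[z]$-module, hence finite over $\F_q$. It is finitely generated because $\expEt$ is an isometry on a small ball and $F_\infty/\OF$ is compact. It is torsion by Nakayama, because $\expEt\equiv\mathrm{id}\pmod z$ forces $H(\Et/\OF[z])=zH(\Et/\OF[z])$. This, rather than an ``isomorphism modulo lattices'', is why $H(\Et/\widetilde{\OF})=0$.

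\textbf{(b) Snake lemma for $z-1$.} Multiplication by $z-1$ is injective on $X=\LieEt(\T(F_\infty))/U(\Et/\OF[z])$ and on $Y=\Et(\T(F_\infty))/\Et(\OF[z])$. Its cokernels are $\LieE(F_\infty)/U_{\mathrm{St}}$ and $E(F_\infty)/E(\OF)$, where $U_{\mathrm{St}}$ is the image of $U(\Et/\OF[z])$ under $z\mapsto 1$. The snake lemma for $z-1$ on $0\to X\xrightarrow{\expEt}Y\to H(\Et/\OF[z])\to 0$ then gives $U(E/\OF)/U_{\mathrm{St}}\cong H(\Et/\OF[z])[z-1]$ and $H(E/\OF)\cong H(\Et/\OF[z])/(z-1)$. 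So $H(E/\OF)$ is a cokernel, not a submodule as in your sequence; what injects is $U(E/\OF)/U_{\mathrm{St}}$.

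\textbf{(c) Equality of Fitting ideals.} An $A$-linear endomorphism of a finite $A$-module has kernel and cokernel with the same Fitting ideal. Applied to $z-1$ on $H(\Et/\OF[z])$, this yields $[U(E/\OF):U_{\mathrm{St}}]_A=[H(E/\OF)]_A$.

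Finally, you need the twisted regulator to specialize to exactly $[\LieE(\OF):U_{\mathrm{St}}]_A$. This requires bases of $U(\Et/\OF[z])$ whose values at $z=1$ form a basis of $U_{\mathrm{St}}$, not merely a free family. It also requires compatibility of $\sgn$ with $z\mapsto 1$. A free family only gives the identity up to a factor in $K^*$, which is all the paper's own specialization lemma is used for.
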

\begin{proof}
    See \cite{fang_special_2015} or \cite{demeslay_class_2022}.
\end{proof}
We call $L(E/\OF)$ the \textit{special $L$-value} (or \textit{$L$-series}) attached to $E$.

In this paper, we will work with a very specific family of Anderson modules; we will consider $A$-finite Anderson modules $E/\OF$ such that $\sigma N_E(L) \subseteq (t-\theta)N_E(L)$. This condition implies that $r\leq n$, where $r$ is the rank of $E$ and $n$ the dimension of $E$. The most prominent example of such an Anderson module is $\Cn$, the $n$-th tensor power of the Carlitz module. In \cite{angles_special_2020}, Anglès, Ngo Dac and Tavares-Ribeiro proved the following reduced version of the class formula, which will be the starting point of our study.

\begin{theorem} [Reduced class formula]\label{th : class formula mod K}
    Let $E/\OF$ be an $A$-finite Anderson module of rank $r$ such that $\sigma  N_E(L)\subseteq (t-\theta)N_E(L)$. Then there exists a sub-$K_\infty$-vector space $W$ of $\LieE(F_\infty)$ of dimension $rm$ such that
    $U(E/\OF)\cap W$ and $\LieE(\OF)\cap W$ are $A$-lattices in $W$, and 
        $$L(E/\OF) = [\LieE(\OF)\cap W : U(E/\OF)\cap W]_{A}\cdot \alpha$$
        for some $\alpha \in K^*$.
\end{theorem}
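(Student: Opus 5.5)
This is the reduced class formula of Anglès, Ngo Dac and Tavares-Ribeiro, so the plan follows their strategy: build $W$ from the dual $t$-motive, and compare the full class formula with a rank-$rm$ computation via a Euler-product argument. Throughout, let $N=N_E(L)$. It is $A$-finite of rank $r$ over $L[t]$, and the hypothesis $\sigma N\subseteq (t-\theta)N$ holds.

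\emph{Step 1: construct $W$.} The quotient $N/\sigma N\cong \Mat_{1\times n}(L)$ is dual to $\LieE$. The hypothesis gives a surjection $N/\sigma N\to N/(t-\theta)N$. The target is an $L$-vector space of dimension $r$, since $N$ is free of rank $r$ over $L[t]$.

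Dualizing, and descending from $L$ to $F$ (the construction is Galois/Frobenius compatible, so it is defined over $F$), we obtain a sub-$F$-space of $\LieE(F)$ of dimension $r$. Call it the "special" part of $\LieE(F)$. On this part, $\partial_E(\theta)$ acts as multiplication by $\theta$, because $t-\theta$ kills the quotient. We then set
$$W:=K_\infty\otimes_K(\text{special part of }\LieE(F)) \subset \LieE(F_\infty),$$
which has dimension $rm$ over $K_\infty$.

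\emph{Step 2: lattices.} The intersection $\LieE(\OF)\cap W$ is an $A$-lattice in $W$ because $W$ is defined over $K$. For $U(E/\OF)\cap W$, we must show that $\exp_E$ restricted to $W$ has a reasonable image. The idea is that $\exp_E$ on $W$ factors through the rank-$r$ "Drinfeld-like" part of $E$. Concretely, the $t$-motivic description of $\exp_E$ via Anderson generating functions, together with the condition $\sigma N\subseteq (t-\theta)N$, lets us identify $\exp_E|_W$ with the exponential of an object of rank $r$. Discreteness and cocompactness of $U(E/\OF)\cap W$ in $W$ then follow as in Taelman's argument.

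\emph{Step 3: compare with the class formula.} By the class formula, $L(E/\OF)=[\LieE(\OF):U(E/\OF)]_A[H(E/\OF)]_A$, and the factor $[H(E/\OF)]_A$ lies in $A\subset K^*$. It therefore suffices to show
$$[\LieE(\OF):U(E/\OF)]_A\equiv[\LieE(\OF)\cap W:U(E/\OF)\cap W]_A \pmod{K^*}.$$
Choose a $K$-complement $W'_K$ of the special part inside $\LieE(F)$, and set $W'=K_\infty\otimes W'_K$. We want to show that $\log_E$ (or a suitable generalization of it, as in Demeslay's equivariant setting) maps a finite-index submodule of $E(\OF)$ into $W\oplus(\text{something } K\text{-rational on }W')$. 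Then $U(E/\OF)$ is commensurable with $(U\cap W)\oplus(\text{a }K\text{-rational lattice in }W')$, and the covolume splits as the covolume on $W$ times a determinant with entries in $K$.

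The $K$-rationality on $W'$ should come from the fact that, on the complementary $n-r$ directions, $E$ behaves like the "$(t-\theta)$-torsion" directions. There the logarithm series, read through the $t$-motive, evaluates to polynomial (hence $K$-valued) expressions. This is analogous to the way the lower coordinates of $\log_{\Cn}$ at special points are rational.

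\emph{Main obstacle.} The hardest step is this last rationality claim: showing that $U(E/\OF)$ decomposes up to finite index, with its projection to $W'$ being $K$-rational. I would first try to prove it using Anderson generating functions. For $u\in U(E/\OF)$, the series $\sum_k \exp_E(\partial_E(\theta)^{-k-1}u)t^k$ defines an element of the Tate algebra pairing with $N$. Evaluating its residue at $t=\theta$ recovers $u$. The hypothesis $\sigma N\subseteq (t-\theta)N$ should force the non-$W$ components of $u$ to be polynomial expressions in $\exp_E(u)\in\OF^n$, hence in $F$, giving the required rationality.
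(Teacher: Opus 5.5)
\paragraph{Verdict.} Your overall shape is right: use the full class formula, note that $[H(E/\OF)]_A\in A$, and split the regulator into a rank-$rm$ piece on $W$ times a $K$-rational factor. But the space you build in Step 1 is the wrong one, and with it Step 2 is false.

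\paragraph{Step 1 gives the wrong $W$.} By your own description, the special part lies in $\ker(\partial_E(\theta)-\theta)$. Take $E=\Cn$, $F=K$, $n\geq 2$, which has rank $1$ and satisfies the hypothesis. This kernel is $Ke_1$ with $e_1=(1,0,\ldots,0)^T$, so your $W$ is $K_\infty e_1$. Suppose $0\neq x\in U(\Cn/A)\cap K_\infty e_1$. Then $x$ and $\partial(\theta)x=\theta x$ are distinct nonzero points with exponentials in $\Cn(A)$ and the same last coordinate $0$. This contradicts Yu's theorem (Theorem~\ref{th : Yu}), so $U(\Cn/A)\cap W=\{0\}$ and is not a lattice.

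The error starts with the identification. In the paper's normalization, $N/\sigma N\cong N_{E,0}(L)$ is $\LieE(L)$ itself via transposition, not its dual. So $N/\sigma N\to N/(t-\theta)N$ only gives an $r$-dimensional \emph{quotient} of $\LieE$. Lifting it to the correct subspace is not linear algebra; it is dictated by the logarithm.

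By property (iv) of Definition~\ref{def : inverse Frobenius}, $\sum_j\varphi_j(h)=\log_E(h^T)^T$, so the $\varphi_k$ are the $\tau^k$-components of $\log_E$. The $W$ of Definition~\ref{def : W}, with which the paper quotes the theorem from Angl\`es--Ngo Dac--Tavares-Ribeiro, is spanned by the tails $x_l^{q^k}\varphi_k(\nu_j)$ for $k\gg 0$. For $\Cn$ this is $\partial(K_\infty)P_ke_n$. The relation $\log_{\Cn}\Cn_\theta=\partial(\theta)\log_{\Cn}$ gives $P_ke_n=(\partial(\theta)-\theta^{q^k})^{-n}P_{k-1}e_n$. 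For $n=q+1$ this yields $W=\partial(K_\infty)(\partial(\theta)-\theta^{q})^{-n}e_n$, which differs from $\partial(K_\infty)e_n$. Yu's theorem also shows that an admissible line is unique (it must contain $\mathfrak{z}_n$). So even the natural candidate $\partial(K_\infty)e_n$ fails.

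\paragraph{Steps 2 and 3 lack the working mechanism.} Step 2's appeal to an ``object of rank $r$'' through which $\exp_E|_W$ factors is not substantiated. In a working argument, the lattice property of $U(E/\OF)\cap W$ is a consequence of rationality, not an input.

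In Step 3, $\log_E$ has a bounded domain of convergence, while every finite-index submodule of $E(\OF)$ is unbounded. So the proposed inclusion cannot even be formulated. The generating-function heuristic also gives no reason for the non-$W$ components of units to be polynomial in $\exp_E(u)$; for your $W$ that conclusion is false.

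What works is a head/tail splitting of the logarithm. Where $\log_E(x)$ converges,
$\log_E(x)^T=\sum_{k<k_0}\varphi_k(x^T)+\sum_{k\geq k_0}\varphi_k(x^T)$.
The head is a finite sum in $\LieE(F)$, and the tail lies in $W$; this is what the definition of $W$ is designed for.

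Suppose a finite-index subgroup of $U(E/\OF)$ lies in $\LieE(F)+W$, with $W$ $K$-rational of dimension $rm$. Its image in $\LieE(F_\infty)/W$ then lies in a $K$-space of dimension $(n-r)m$. This forces $U(E/\OF)\cap W$ to have rank $rm$. It also splits the covolume as the covolume on $W$ times a determinant in $K^*$. Only this statement about the image modulo $W$ is needed, not a direct-sum decomposition of $U$.

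Units are not small, so reaching all of them requires getting around the divergence of $\log_E$. The $z$-deformation does this. There $\log_{\widetilde{E}}$ converges $z$-adically on $\widetilde{E}(\OF[z])$, and $\log_{\widetilde{E}}(\exp_{\widetilde{E}}(u))=u$ holds in $F_\infty[[z]]^n$ for every $u$ in the Tate algebra. Hence every $z$-unit is a polynomial head plus a tail built from the $\widetilde{\varphi}_k$ with $k\geq k_0$. One then specializes at $z=1$.
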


Let
$$N_{E,0}(L):= \Mat_{1\times n}({L}) \subset N_E(L).$$
We can define an $A$-module structure over $N_{E,0}(L)$ in the following way:
$$a\cdot h :=(\partial_E(a)h^T)^T =h \partial_E(a)^T,\qquad h\in N_{E,0}(L),\, a\in A.$$
In order to ease the notation, we will denote $a\cdot h$ just by $\partial_E(a) h$. We have an isomorphism of $A$-modules
\begin{equation*}
    N_{E,0}(L) \xlongrightarrow{\sim} \LieE(L)
\end{equation*}
given by transposition. We will in the following identify both spaces.

Take $j\in \N$. If $\lambda \in L$, then we denote $\lambda^{(j)}=\lambda^{q^j}$, and if $h=(h_1,\ldots,h_n) \in N_{E,0}(L)$, then we denote $h^{(j)}=(h_1^{(j)},\ldots,h_n^{(j)})$.
\begin{defin} \label{def : inverse Frobenius}
For any $j\in \N$, define
$$\varphi_j : N_E(L)\longrightarrow N_{E,0}(L)$$
to be the unique map verifying the following properties:
\begin{enumerate}
    \item $\varphi_j(h_1+\lambda h_2)=\varphi_j(h_1)+\lambda^{(j)}\varphi_j(h_2)$ for all $h_1,h_2\in N_E(L)$ and $\lambda\in L$.
    \item $\varphi_j(t\cdot h)=\partial_E(\theta)\varphi_j(h)$ for all $h\in N_E(L)$.
    \item $\varphi_j(\sigma^kh)=\varphi_{j-k}(h)$ for all $j,k\in\N$ and $h\in N_E(L)$, where we put $\varphi_l=0$ if $l<0$.
    \item $\varphi_j(h)=h^{(j)}P_j^T$ for all $h\in N_{E,0}(L)$. Recall that $P_j$ is the $j$-th coefficient of $\log_E$.
\end{enumerate}
\end{defin}

\begin{defin}\label{def : W}
    Let $(\nu_1,\ldots,\nu_r)$ be an $L[t]$-basis of $N_E(L)$, and $(x_1,\ldots,x_m)$ a $K$-basis of $F$. The space $W$ of Theorem \ref{th : class formula mod K} is defined as follows. If $F/K$ is separable, then define
$$W:=\sum_{l=1}^m\sum_{j=1}^r\partial_E(K_\infty)x_l^{q^{k}}\varphi_k(\nu_j)$$
for some $k\gg 0$ (it is shown in \cite{angles_special_2020} that for large enough $k$ the definition does not depend on $k$). If $F/K$ is not separable, then let $F^{sep}$ be the separable closure of $K$ in $F$, and let $s = [F^{sep}:K]$. Choose $y_{s+1},\ldots,y_m$ such that $F=F^{\sep}\oplus \oplus_{l=s+1}^m K y_l$, and define
$$W: = \sum_{l=1}^s\sum_{j=1}^r\partial_E(K_\infty)x_l^{q^{k}}\varphi_k(\nu_j) + \sum_{l=s+1}^m\sum_{j=1}^r\partial_E(K_\infty)y_l\varphi_k(\nu_j).$$
As before, for large enough $k$ the definition does not depend on $k$.
\end{defin}

%%%%%%%%%%%%%%%%%%%%%%%%%%%%%%%%%%%%%%%%%%%%%%%%%%%%%%%%%%%%%%%%%%%%%%%%%%%%%%%%%%%%%%%%%%%%%%%%%%%%%%%%%%%%%%%%%%
%%%%%%%%%%%%%%%%%%%%%%%%%%%%%%%%%%%%%%%%%%%%%%%%%%%%%%%%%%%%%%%%%%%%%%%%%%%%%%%%%%%%%%%%%%%%%%%%%%%%%%%%%%%%%%%%%%
%%%%%%%%%%%%%%%%%%%%%%%%%%%%%%%%%%%%%%%%%%%%%%%%%%%%%%%%%%%%%%%%%%%%%%%%%%%%%%%%%%%%%%%%%%%%%%%%%%%%%%%%%%%%%%%%%%

\section{The $z$-twist}
In order to prove a $P$-adic version of Theorem \ref{th : class formula mod K}, we will make use of the $z$-deformation of an Anderson module. Let $z$ be a new variable, and let
\begin{itemize}
    \item $\widetilde{K}:=K(z), \quad \widetilde{K_\infty}:=\Fq(z)((1/\theta));$
    \item $\widetilde{F}:=F(z), \quad \widetilde{F_\infty}:=F\otimes_K \widetilde{K_\infty};$
    \item $\widetilde{L}:=L(z)$;
    \item $\tilde{A}:=\Fq(z)A$, \quad $\widetilde{\OF} :=\F_q(z)\OF$; 
    \item $\T(K_\infty) := \Fq[z]((1/\theta))=\left\{\sum_{i=0}^\infty a_iz^i \in K_\infty[[z]] : v_\infty(a_i)\rightarrow \infty\right\};$
    \item $\T_\infty(K) := \T(K_\infty) \cap K[[z]] = \left\{\sum_{i=0}^\infty a_iz^i \in K[[z]] : v_\infty(a_i)\rightarrow \infty\right\};$
    \item $\T(F_\infty) := F \otimes_K \T(K_\infty) = \left\{\sum_{i=0}^\infty a_iz^i \in F_\infty[[z]] : v_\infty(a_i)\rightarrow \infty\right\};$
    \item $\T_\infty(F) := \T(F_\infty) \cap F[[z]] = \left\{\sum_{i=0}^\infty a_iz^i \in F[[z]] : v_\infty(a_i)\rightarrow \infty\right\}.$
\end{itemize}
We also denote by $v_\infty$ the Gauss valuation on $\widetilde{K_\infty}$. This is the unique valuation on $\widetilde{K_\infty}$ such that if 
$$x=\sum_{i=0}^N a_i z^i \in K_\infty[z],\quad \text{ then } \quad v_\infty(x)=\min_i\{v_\infty(a_i)\}.$$
We note that $\widetilde{K_\infty}$ is complete with respect to this valuation.

Let $E/\OF$ be an Anderson module given by $E_\theta=\partial_E(\theta)+\sum_{i=1}^D E_{i}\tau^i$. The $z$-deformation of the Anderson module $E$ is the morphism of $\F_q(z)$-algebras
$$\widetilde{E}  : \widetilde{A} \rightarrow \Mat_{n\times n}(\widetilde{\OF})\{\tau\} $$
given by
$$\theta  \mapsto \widetilde{E}_\theta = \partial_E(\theta) + \sum_{i=1}^{D} E_{i}\tau^iz^i,$$
and extended $\F_q(z)$-linearly. We have that $\LieEt(\widetilde{F_\infty})$ is a $\widetilde{K_\infty}$-vector space of dimension $nm$, and $\LieEt(\widetilde{\OF})$ is an $\widetilde{A}$-lattice in $\LieEt(\widetilde{F_\infty})$. 

The twisted $L$-series is
$$L(\widetilde{E}/\widetilde{\OF}):=\prod_{P\in A}\frac{[\LieEt(\widetilde{\OF}/P\widetilde{\OF})]_{\widetilde{A}}}{[\widetilde{E}(\widetilde{\OF}/P\widetilde{\OF})]_{\widetilde{A}}} \in \T_\infty(K),$$
where here and in the following, $P$ runs through all monic irreducible elements of $A$.

The twisted exponential and logarithm series are defined as
$$\exp_{\widetilde{E}} := \sum_{k\geq 0} Q_k\tau^kz^k,\quad \log_{\widetilde{E}}:=\sum_{k\geq 0}P_k\tau^kz^k.$$
They are the unique series in $\I_n + \tau \Mat_{n\times n}(\widetilde{F})\{\{\tau\}\}$ verifying
\begin{align*}
    \exp_{\widetilde{E}} \partial_E(a) = \widetilde{E}_a \exp_{\widetilde{E}}, \quad &\forall \, a\in \widetilde{A}, \\
    \log_{\widetilde{E}} \widetilde{E}_a = \partial_E(a)\log_{\widetilde{E}}, \quad &\forall a\in \widetilde{A},\\
    \exp_{\widetilde{E}}\log_{\widetilde{E}} = \log_{\widetilde{E}} \exp_{\widetilde{E}} &= \I_n .
\end{align*}
We have that $\expEt$ converges on $\LieEt(\widetilde{F_\infty})$, while $\log_{\widetilde{E}}$ converges on a smaller subspace. 

We have two versions of the unit module in this case:
$$U(\widetilde{E}/\widetilde{\OF}):=\{x\in \Lie_E(\widetilde{F_\infty}):\exp_{\widetilde{E}}(x)\in \widetilde{E}(\widetilde{\OF})\},$$
$$U(\widetilde{E}/\OF[z]):=\{x\in \T(F_\infty) : \exp_{\widetilde{E}}(x)\in \widetilde{E}(\OF[z])\}.$$
It is known (\cite[Proposition 1]{angles_arithmetic_2021}) that $U(\widetilde{E}/\widetilde{\OF})$ is a $\widetilde{A}$-lattice in $\Lie_E(\widetilde{F_\infty})$, $U(\widetilde{E}/\OF[z])$ is a finitely-generated $A[z]$-module and
$$U(\widetilde{E}/\widetilde{\OF}) = \F_q(z)U(\widetilde{E}/\OF[z]).$$

To every twisted Anderson module $\widetilde{E}/\widetilde{L}$ of dimension $n$, we can associate a twisted dual $t$-motive $N_{\widetilde{E}}(\widetilde{L})$ over $\widetilde{L}$ in the following way. Consider the ring $\widetilde{L}\{\sigma\}$, where  $z$ commutes with everything. We set $N_{\widetilde{E}}(\widetilde{L})=\Mat_{1\times n}(\widetilde{L}\{\sigma\})$, which is naturally a free module over $\widetilde{L}\{\sigma\}$ of rank $n$. The $t$-action is in this case given by
$$t\cdot h := h\left(\partial_E(\theta)^T +  \sum_{i=1}^D z^i\sigma^iE_{i}^T\right)\qquad \text{for every }h\in N_{\widetilde{E}}(\widetilde{L}),$$
where as before $\cdot^T$ denotes matrix transposition. 

Let 
$$N_{\widetilde{E},0}(\widetilde{L}):= \Mat_{1\times n}({\widetilde{L}}) \subset N_{\widetilde{E}}(\widetilde{L}).$$
For a given $j\in \N$, if $x=\sum_{i=0}^N a_i z^i \in L[z]$, then define $x^{(j)}=\sum_{i=0}^N a_i^{q^j} z^i$, and if $x=f/g \in \widetilde{L}$ with $f,g \in L[z]$, then $x^{(j)}=f^{(j)}/g^{(j)}$. Lastly, if $h=(h_1,\ldots,h_n) \in N_{\widetilde{E},0}(\widetilde{L})$, we denote $h^{(j)}=(h_1^{(j)},\ldots,h_n^{(j)}$).
\begin{theorem}\label{th : class formula mod K tilde}
    Let $E/\OF$ be an $A$-finite Anderson module of rank $r$ such that $\sigma  N_E(L)\subseteq (t-\theta)N_E(L)$. Then there exists a sub-$\widetilde{K_\infty}$-vector space $\widetilde{W}$ of $\LieEt(\widetilde{F_\infty})$ of dimension $rm$ such that:
    \begin{enumerate}
        \item $\left(\widetilde{W} \cap \LieEt(\T(F_\infty))\right)|_{z=1}=W$. 
        \item $U(\widetilde{E}/\widetilde{O}_F)\cap \widetilde{W}$ and $\LieEt(\widetilde{O}_F)\cap \widetilde{W}$ are $\widetilde{A}$-lattices in $\widetilde{W}.$
        \item We have $$L(\widetilde{E}/\widetilde{O}_F) = [\LieEt(\widetilde{O}_F)\cap \widetilde{W} : U(\widetilde{E}/\widetilde{O}_F)\cap \widetilde{W}]_{\widetilde{A}}\cdot \widetilde{\alpha}$$
        for some $\alpha \in \widetilde{K}^*$. 
    \end{enumerate}
\end{theorem}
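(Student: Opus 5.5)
We will follow the proof of Theorem \ref{th : class formula mod K} from \cite{angles_special_2020} essentially line by line, working over the base field $\F_q(z)$ instead of $\F_q$. The paper already says the twisted case is "formally the same", so the point is to check that each ingredient survives. The first step is to set up the twisted analogue of Definition \ref{def : inverse Frobenius}. Take an $L[t]$-basis $(\nu_1,\ldots,\nu_r)$ of $N_E(L)$. Because the twisted $t$-action is obtained by replacing $\sigma^i$ with $z^i\sigma^i$, the same elements form an $\widetilde{L}[t]$-basis of $N_{\widetilde{E}}(\widetilde{L})$, and the hypothesis $\sigma N_{\widetilde{E}}(\widetilde{L})\subseteq (t-\theta)N_{\widetilde{E}}(\widetilde{L})$ still holds. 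Define maps $\widetilde{\varphi}_j$ by the same four properties, using the twisted logarithm coefficients $P_jz^j$ in property 4. Concretely, $\widetilde{\varphi}_j(h)=z^j\varphi_j(h)$ on elements not involving $z$.

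Next we define $\widetilde{W}$ by the formula of Definition \ref{def : W}, with $\partial_E(K_\infty)$ replaced by $\partial_E(\widetilde{K_\infty})$ and $\varphi_k$ replaced by $\widetilde{\varphi}_k$. Since $\widetilde{\varphi}_k(\nu_j)=z^k\varphi_k(\nu_j)$ and $z^k$ is a unit in $\widetilde{K_\infty}$, the space $\widetilde{W}$ is simply the $\widetilde{K_\infty}$-span of the same vectors $x_l^{q^k}\varphi_k(\nu_j)$ (respectively $y_l\varphi_k(\nu_j)$) that span $W$. This gives $\dim\widetilde{W}=rm$ at once. It should also give point 1: these generators lie in $\LieE(F_\infty)$, and $\widetilde{W}\cap\LieEt(\T(F_\infty))$ is the $\T(K_\infty)$-span of the generators, up to saturation. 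For the saturation we will use that the generators form a $K_\infty$-basis of $W$, so a $\widetilde{K_\infty}$-combination lying in $\T(F_\infty)$ must have coefficients in $\T(K_\infty)$; this can be seen by completing to a basis of $\LieE(F_\infty)$. Specializing at $z=1$ then returns exactly $W$.

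For points 2 and 3 we will replay the argument of \cite{angles_special_2020}. That argument has two parts. First, it shows that $\log_E$ applied to suitable elements of $E(\OF)$, and the vectors $\varphi_k(\nu_j)$, produce $A$-lattices in $W$. Second, it computes $L(E/\OF)$ modulo $K^*$ through an Euler-product and Fitting-ideal computation that reduces everything to a determinant on $W$, using the class formula to control the complement. In the twisted setting we substitute the following inputs:
\begin{itemize}
\item the twisted class formula $L(\widetilde{E}/\widetilde{\OF})=[\LieEt(\widetilde{\OF}):U(\widetilde{E}/\widetilde{\OF})]_{\widetilde{A}}[H(\widetilde{E}/\widetilde{\OF})]_{\widetilde{A}}$, from \cite{angles_arithmetic_2021} or Demeslay;
\item the fact that $U(\widetilde{E}/\widetilde{\OF})$ is an $\widetilde{A}$-lattice \cite[Proposition 1]{angles_arithmetic_2021}.
\end{itemize}
Every field-theoretic statement over $\F_q$ (finite-dimensionality, separability of $F/K$, freeness) transfers verbatim after base change to $\F_q(z)$, because $z$ is transcendental and commutes with $\tau$ only up to the twist. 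The resulting constant $\widetilde{\alpha}$ lies in $\widetilde{K}^*$ rather than $K^*$, since all linear algebra now happens over $\F_q(z)$.

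The main obstacle is the step in \cite{angles_special_2020} that shows the complement of $W$ contributes a rational factor. There the condition $\sigma N\subseteq (t-\theta)N$ is used to show that $\varphi_k$ of $(t-\theta)$-multiples become algebraic, so that the unit module modulo $W$ is controlled by $K$-rational data. We must check that convergence of $\widetilde{\varphi}_k$ as $k\to\infty$ (stability in $k$) still holds for the Gauss valuation on $\widetilde{K_\infty}$. The plan is to observe that the extra factors $z^k$ have Gauss valuation $0$, so all the $\infty$-adic estimates in \cite{angles_special_2020} carry over unchanged.
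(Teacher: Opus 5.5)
Your route is the paper's: a twisted $\widetilde{\varphi}_j$, the same recipe for $\widetilde{W}$, point 1 read off from the definitions, and points 2 and 3 transported from \cite{angles_special_2020}. However, your first step rests on two false claims.

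\textbf{The basis claim fails.} An $L[t]$-basis $(\nu_j)$ of $N_E(L)$ need not remain an $\widetilde{L}[t]$-basis of $N_{\widetilde{E}}(\widetilde{L})$. Also, $\widetilde{\varphi}_j(h)=z^j\varphi_j(h)$ holds only for $h\in N_{E,0}(L)$. For $h_0\in N_{E,0}(L)$, properties i) and iii) give $\widetilde{\varphi}_j(\sigma^i h_0)=z^{j-i}\varphi_j(\sigma^i h_0)$.

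Here is a counterexample. Take $E=C\oplus C$, so $E_\theta=\theta\I_2+\I_2\tau$; it is $A$-finite of rank $2$ with $\sigma N_E(L)=(t-\theta)N_E(L)$. Put $c=\theta^{1/q}-\theta$, and set $\nu_1=(1+\sigma^2+c\sigma,\sigma)$ and $\nu_2=(\sigma,1)$.
\begin{itemize}
\item In the untwisted structure, componentwise $(t-\theta)\cdot 1=\sigma$ and $(t-\theta)^2\cdot 1=\sigma^2+c\sigma$. So $(\nu_1,\nu_2)$ is obtained from $((1,0),(0,1))$ by the matrix with rows $(1+(t-\theta)^2,\ t-\theta)$ and $(t-\theta,\ 1)$. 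Its determinant is $1$, so $(\nu_1,\nu_2)$ is an $L[t]$-basis.
\item For the twisted action one has $\sigma=z^{-1}(t-\theta)\cdot 1$ and $\sigma^2=z^{-2}\bigl((t-\theta)^2-c(t-\theta)\bigr)\cdot 1$.
\item The transition determinant therefore becomes $1+c(z^{-1}-z^{-2})(t-\theta)$, which is not a unit of $\widetilde{L}[t]$.
\end{itemize}

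\textbf{The repair.} This is exactly what the paper's requirement $v_j|_{z=1}=\nu_j$ accommodates. Because $z$ commutes with $\sigma$, the map $\psi:\sum_i c_i\sigma^i\mapsto\sum_i c_iz^i\sigma^i$ is a ring automorphism of $\widetilde{L}\{\sigma\}$. It sends $\partial_E(\theta)^T+\sum_i\sigma^iE_i^T$ to $\partial_E(\theta)^T+\sum_i z^i\sigma^iE_i^T$. Applied entrywise, it is an $\widetilde{L}[t]$-linear isomorphism from $\Mat_{1\times n}(\widetilde{L}\{\sigma\})$ with the untwisted action onto $N_{\widetilde{E}}(\widetilde{L})$. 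The source is free on the $\nu_j$ by base change.

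Hence $v_j:=\psi(\nu_j)$ is a basis with $v_j|_{z=1}=\nu_j$. The formula above gives $\widetilde{\varphi}_k\circ\psi=z^k\varphi_k$ on $N_E(L)$, i.e.\ $\widetilde{\varphi}_k(v_j)=z^k\varphi_k(\nu_j)$. This is precisely the identity you use afterwards.

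\textbf{What survives.} With $\nu_j$ replaced by $v_j$, the rest of your proposal is correct. $\widetilde{W}$ is the $\widetilde{K_\infty}$-span of the $K_\infty$-basis $x_l^{q^k}\varphi_k(\nu_j)$ (resp.\ $y_l\varphi_k(\nu_j)$) of $W$, so $\dim\widetilde{W}=rm$. Your completion-to-a-basis argument shows that $\widetilde{W}\cap\LieEt(\T(F_\infty))$ is the $\partial_E(\T(K_\infty))$-span of these vectors. That proves point 1 more explicitly than the paper's ``follows directly from the definitions''. Points 2 and 3 are deferred to \cite{angles_special_2020}, in your proposal just as in the paper.
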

\begin{proof}
    The proof is essentially the same as that of Theorem \ref{th : class formula mod K}, applied to the twisted version of all our objects. Consider the map 
    $$\widetilde{\varphi}_j:N_{\widetilde{E}}(\widetilde{L})\rightarrow N_{\widetilde{E},0}(\widetilde{L})$$
    satisfying properties i), ii) and iii) of Definition \ref{def : inverse Frobenius}, and such that
    $$\widetilde{\varphi}_j(h)=z^jh^{(j)}P_j^T\quad \text{ for all}\quad h\in N_{{\widetilde{E}},0}(\widetilde{L}).$$
    We use the notations of Definition \ref{def : W}. Let $(v_1,\ldots,v_r)$ be a $\widetilde{L}[t]$-basis of $N_{\widetilde{E}}(\widetilde{L})$ such that $v_j|_{z=1}=\nu_j$ for all $j$. We set
    $$\widetilde{W} := \sum_{l=1}^m\sum_{j=1}^r\partial_E(\widetilde{K_\infty})x_l^{q^{k}}\widetilde{\varphi}_k(v_j)$$
    if $F/K$ is separable, and
    $$\widetilde{W}: = \sum_{l=1}^s\sum_{j=1}^r\partial_E(\widetilde{K_\infty})x_l^{q^{k}}\widetilde{\varphi}_k(v_j) + \sum_{l=s+1}^m\sum_{j=1}^r\partial_E(\widetilde{K_\infty})y_l\widetilde{\varphi}_k(v_j)$$
    if $F/K$ is not separable. The first point of the theorem follows directly from the definitions. The proof of points 2 and 3 is formally the same as in \cite{angles_special_2020}.
\end{proof}

\begin{lemma}\label{lem : ev(U cap W) is lattice}
    Both $\left(\LieEt(\OF[z])\cap \widetilde{W}\right)|_{z=1}$ and $\left(U(\widetilde{E}/\OF[z])\cap \widetilde{W}\right)|_{z=1}$ are $A$-lattices in $W$. Moreover, there exist
    \begin{itemize}
        \item an $\widetilde{A}$-basis $(\widetilde{u_1},\ldots,\widetilde{u_{rm}})$ of $U(\widetilde{E}/\widetilde{\OF})\cap \widetilde{W}$ such that $\widetilde{u_i}\in U(\widetilde{E}/\OF[z])$ for all $i$, and if $u_i=\widetilde{u_i}|_{z=1}$, then $\sum_i \partial_E(A) u_i$ is an $A$-lattice in $W$;
        \item an $\widetilde{A}$-basis $(\widetilde{b_1},\ldots,\widetilde{b_{rm}})$ of  $\LieEt(\widetilde{\OF})\cap \widetilde{W}$ such that $\widetilde{b_i}\in \LieE(\OF[z])$ for all $i$, and if $b_i=\widetilde{b_i}|_{z=1}$, then $\sum_i \partial_E(A) b_i$ is an $A$-lattice in $W$.
    \end{itemize}
\end{lemma}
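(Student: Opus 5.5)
We work with the $A[z]$-modules $M_1:=\LieEt(\OF[z])\cap \widetilde{W}$ and $M_2:=U(\widetilde{E}/\OF[z])\cap \widetilde{W}$ and use three inputs. First, Theorem \ref{th : class formula mod K tilde}(2): the $\F_q(z)$-spans of $M_1$ and $M_2$ are $\widetilde{A}$-lattices in $\widetilde{W}$. Second, the equality $U(\widetilde{E}/\widetilde{\OF}) = \F_q(z)U(\widetilde{E}/\OF[z])$ (and the obvious analogue for $\LieEt$). Third, point (1), namely $(\widetilde{W}\cap \LieEt(\T(F_\infty)))|_{z=1}=W$.

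\textbf{Step 1 (finiteness and a basis defined over $\F_q[z]$).} The module $U(\widetilde{E}/\OF[z])$ is finitely generated over $A[z]$ by \cite[Proposition 1]{angles_arithmetic_2021}. Intersecting with the $\widetilde{K_\infty}$-subspace $\widetilde{W}$ is intersecting with a kernel of linear forms, so $M_2$ is again finitely generated over the noetherian ring $A[z]$. We have $\F_q(z)M_2=U(\widetilde{E}/\widetilde{\OF})\cap\widetilde{W}$, and this is free of rank $rm$ over $\widetilde{A}$. Start from any $\widetilde{A}$-basis and clear denominators in $\F_q[z]$; this gives a basis $(\widetilde{u_i})$ with every $\widetilde{u_i}\in M_2$. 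We want evaluation at $z=1$ to behave well, so we choose the basis to be \emph{saturated at $z=1$}. If $\sum_i \partial_E(a_i)\widetilde{u_i}\in (z-1)\T(F_\infty)$ with $a_i\in A[z]$ not all divisible by $z-1$, we replace a suitable $\widetilde{u_i}$ by $(z-1)^{-1}\sum_i\partial_E(a_i)\widetilde{u_i}$. This element still lies in $U(\widetilde{E}/\OF[z])$, because $\exp_{\widetilde{E}}$ commutes with multiplication by $z$ and $\OF[z]$ is saturated with respect to $z-1$ inside $\T_\infty(F)$. Each such replacement divides the covolume $[\LieEt(\widetilde{\OF})\cap\widetilde{W}: \cdot\,]_{\widetilde{A}}$ by $z-1$, so the process terminates.

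\textbf{Step 2 (the evaluations form a lattice).} For $u_i=\widetilde{u_i}|_{z=1}$, point (1) gives $u_i\in W$. Since $\dim_{K_\infty}W=rm$, it suffices to show that $u_1,\dots,u_{rm}$ are $K_\infty$-linearly independent. Write $\det(\widetilde{u_i})$ for the determinant of these vectors in a fixed basis of $\widetilde{W}$ adapted to $\T(F_\infty)$, for instance one built from the $x_l^{q^k}\widetilde{\varphi}_k(v_j)$. This determinant lies in $\T(K_\infty)$. After Step 1 it should not vanish at $z=1$, because saturation means the $\widetilde{u_i}$ stay independent modulo $z-1$ over $A$, hence over $K_\infty$. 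Then $\sum_i\partial_E(A)u_i$ is an $A$-lattice in $W$. Since $\widetilde{u_i}|_{z=1}\in M_2|_{z=1}$ and, conversely, $M_2|_{z=1}$ is contained in the $A$-span of the $u_i$ (saturation again), we get $M_2|_{z=1}=\sum_i\partial_E(A)u_i$. The same argument applied to $M_1$, with $\OF[z]$ in place of units, produces the $\widetilde{b_i}$ and shows that $M_1|_{z=1}$ is an $A$-lattice.

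\textbf{Main obstacle.} The hard part is the non-degeneracy at $z=1$ in Step 2: passing from "independent modulo $(z-1)$ over $A$" to "independent over $K_\infty$". The concern is a $K_\infty$-relation with non-polynomial coefficients. I would first try to rule this out using discreteness. For the $b_i$ this is immediate, since $\OF$ is discrete in $F_\infty$. For the $u_i$, I would use the fact that $U(E/\OF)$ is discrete in $\LieE(F_\infty)$ and that the $u_i$ lie in $U(E/\OF)$: if $\exp_{\widetilde E}(\widetilde u_i)\in \OF[z]$, then $\exp_E(u_i)\in\OF$. This identifies $\sum_i A u_i$ with a discrete finitely generated $A$-submodule of $W$, so it is free. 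Its rank can then be compared with $rm$ using the corresponding rank statement over $W$ from \cite{angles_special_2020}, namely that $U(E/\OF)\cap W$ is a lattice of rank $rm$.
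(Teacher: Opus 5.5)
Your argument is correct in substance, and it takes a different and more careful route than the paper.

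\textbf{Comparison with the paper.} The paper takes an arbitrary $\widetilde{A}$-basis inside $U(\widetilde{E}/\OF[z])\cap\widetilde{W}$ and normalizes each vector separately so that $(z-1)\nmid\widetilde{u_i}$. It then deduces from a relation $\sum_i\partial_E(a_i)u_i=0$ that $z-1$ divides every term $\partial_E(a_i)\widetilde{u_i}$. That deduction does not follow from vectorwise normalization. Replacing $\widetilde{u_2}$ by $\widetilde{u_1}+(z-1)\widetilde{u_2}$ gives another such basis, again with no vector divisible by $z-1$, yet now $u_1=u_2$. Saturating the whole family, as you do, is exactly the ingredient that step needs.

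\textbf{Why your route works.} At the end of your procedure the $u_i$ are $A$-linearly independent. They lie in $U(E/\OF)\cap W$, by point 1 of Theorem \ref{th : class formula mod K tilde} and the compatibility of $\exp$ with $z\mapsto 1$. By Theorem \ref{th : class formula mod K}, $U(E/\OF)\cap W$ is an $A$-lattice of rank $rm$. So $A$-independence upgrades to $K_\infty$-independence: in a lattice basis, the coefficient matrix of the $u_i$ has entries in $A$ and nonzero determinant. This is the clean resolution of your ``main obstacle''. The determinant-in-$\T(K_\infty)$ sketch in Step 2 is not justified as written and can simply be dropped. Your route also gives the sharper equality $(U(\widetilde{E}/\OF[z])\cap\widetilde{W})|_{z=1}=\sum_i\partial_E(A)u_i$, because saturation rules out poles at $z=1$ in the $\widetilde{A}$-coordinates of elements of $M_2$.

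\textbf{Two repairs needed in Step 1.}

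First, the replacement step. Replacing $\widetilde{u_{i_0}}$ by $(z-1)^{-1}\sum_i\partial_E(a_i)\widetilde{u_i}$ changes the transition determinant by the factor $a_{i_0}/(z-1)$. So the new family is still an $\widetilde{A}$-basis only when $a_{i_0}\in\F_q(z)^*$. For a relation like $\partial_E(\theta)u_1+\partial_E(\theta+1)u_2=0$, no index is suitable. The fix:
\begin{itemize}
\item evaluate the relation at $z=1$ to get coefficients $\bar a_i\in A$;
\item divide by their gcd $d$, which is allowed because $\partial_E(d)$ is invertible;
\item complete the unimodular row $(\bar a_i)$ to some $\gamma\in\mathrm{GL}_{rm}(A)$, possible since $A$ is a PID, and change basis by $\gamma$;
\item only then divide the first vector by $z-1$.
\end{itemize}

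Second, termination. Since $z-1$ is a unit of the field $\widetilde{K_\infty}$, ``the covolume is divided by $z-1$'' does not by itself force the process to stop. Use the finite generation of $M_2$ instead. For a fixed basis $(e_j)\subset M_2$ there is $f\in\F_q[z]\setminus\{0\}$ with $M_2\subseteq f^{-1}\sum_j\partial_E(A[z])e_j$. Every transition determinant is a unit of $\widetilde{A}$, so it lies in $\F_q(z)^*\cap f^{-rm}A[z]=f^{-rm}(\F_q[z]\setminus\{0\})$. Its $(z-1)$-adic valuation is therefore bounded below, while each step lowers it by exactly one. Equivalently, choose from the start a basis in $M_2$ that minimizes this valuation.
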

\begin{proof}
    We give the proof for the unit module, the other case being very similar. Note that $\left(U(\widetilde{E}/\OF[z])\cap \widetilde{W}\right)|_{z=1}$ is a free $A$-module, since it is a submodule of the free $A$-module $U(E/\OF)\cap W$ and $A$ is a principal ideal domain. It remains to show that it is of maximal rank in $W$. 
    Let $\widetilde{u}_1,\ldots,\widetilde{u}_{rm}$ be an $\widetilde{A}$-basis of $U(\widetilde{E}/\widetilde{\OF})\cap \widetilde{W}$. Since 
    $$U(\widetilde{E}/\widetilde{\OF})\cap \widetilde{W}=\F_q(z)(U(\widetilde{E}/\OF[z])\cap \widetilde{W}),$$
    multiplying by an element of $\F_q(z)$ if necessary, we can assume that $\widetilde{u_i}\in U(\widetilde{E}/\OF[z])$ and $(z-1)\nmid \widetilde{u}_i$ for all $i$.
    
    Set $u_i:=\widetilde{u}_i|_{z=1}$. We claim that the elements $u_1,\ldots,u_{rm}$ are $K_\infty$-linearly independent. Indeed, suppose
    $$\sum_{i=1}^{rm}\partial_E(a_i)u_i=0$$
    for some $a_i\in K_\infty$. Then the point $\sum_{i=1}^{rm}\partial_E(a_i)\widetilde{u}_i$ vanishes at $z=1$. This implies $(z-1)|\partial_E(a_i)\widetilde{u}_i$ for all $i$, and therefore $(z-1)|a_i$. But $a_i\in K_\infty$, so $a_i=0$ for all $i$, as desired.
\end{proof}

\begin{prop} \label{prop : class formula mod K integral level}
    Let $E/\OF$ be an $A$-finite Anderson module of rank $r$ such that $\sigma  N_E(L)\subseteq (t-\theta)N_E(L)$. Take two $\widetilde{A}$-basis, $(\widetilde{b}_i)$ and $(\widetilde{u}_i)$, as in Lemma \ref{lem : ev(U cap W) is lattice}.
    Let $M=\Mat_{(\widetilde{b_i})}(\widetilde{u_i})\in \Mat_{rm\times rm}(\widetilde{K_\infty})$. Then 
    $$L(\widetilde{E}/\widetilde{O}_F)=\det (M)\cdot \widetilde{\alpha}$$
    for some $\alpha \in \widetilde{K}^*$. Moreover, neither $\widetilde{\alpha}$ nor $\det(M)$ have a zero or a pole at $z=1$.
\end{prop}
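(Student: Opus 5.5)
The first claim is a change of basis from Theorem \ref{th : class formula mod K tilde}. By that theorem,
$$L(\widetilde{E}/\widetilde{\OF}) = [\LieEt(\widetilde{\OF})\cap \widetilde{W} : U(\widetilde{E}/\widetilde{\OF})\cap \widetilde{W}]_{\widetilde{A}}\cdot \widetilde{\alpha}_0$$
for some $\widetilde{\alpha}_0\in\widetilde{K}^*$. The ratio of covolumes can be computed in the bases $(\widetilde{b}_i)$, $(\widetilde{u}_i)$ of Lemma \ref{lem : ev(U cap W) is lattice}. It then equals $\det(M)/\sgn(\det M)$, and $\sgn(\det M)\in\F_q(z)^*\subset\widetilde{K}^*$. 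Hence $L(\widetilde{E}/\widetilde{\OF})=\det(M)\widetilde{\alpha}$ with $\widetilde{\alpha}=\widetilde{\alpha}_0/\sgn(\det M)\in\widetilde{K}^*$.

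Next, $\det(M)$ has neither a zero nor a pole at $z=1$. The entries of $\widetilde{b}_i$ and $\widetilde{u}_i$ lie in $\T(F_\infty)$, since $\widetilde{b}_i\in\LieE(\OF[z])$ and $\widetilde{u}_i\in U(\widetilde{E}/\OF[z])$. Consider the $\widetilde{K_\infty}$-span of the $\widetilde{b}_i$. Intersected with $\LieEt(\T(F_\infty))$, this span should be a free $\T(K_\infty)$-module whose specialization at $z=1$ is the $K_\infty$-span of the $b_i$. By Lemma \ref{lem : ev(U cap W) is lattice}, that span is all of $W$. A cleaner route is to pick coordinates. Choose $rm$ of the $nm$ coordinates of $\LieE(F_\infty)$ (after fixing a $K_\infty$-basis of $F_\infty$ from a basis of $\OF$) such that the $rm\times rm$ minor $B$ of the matrix of the $\widetilde{b}_i$ is nonzero at $z=1$. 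This is possible because the $b_i$ are $K_\infty$-linearly independent. Let $U_0$ be the same minor of the matrix of the $\widetilde{u}_i$. Since both families lie in $\widetilde{W}$ and the $\widetilde{b}_i$ form a basis, $U_0=B\cdot M$, so $\det M=\det U_0/\det B$. Both determinants lie in $\T(K_\infty)$, and evaluating at $z=1$ commutes with determinants. We have $\det B|_{z=1}\neq0$. Also $\det U_0|_{z=1}\neq 0$, because the $u_i$ are independent in $W$ and are expressed in the basis $(b_i)$ of $W$ via $M|_{z=1}$. Consistently, $\det(M)|_{z=1}=\det(M_{(b_i)}(u_i))\neq0$. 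The main obstacle is justifying that $M$ has entries regular at $z=1$, so that specialization is meaningful. The minor argument handles this: $B^{-1}$ has entries in the localization of $\T(K_\infty)$ away from $z-1$, because $\det B$ is a unit there.

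Finally, $\widetilde{\alpha}$ has no zero or pole at $z=1$, and this follows by comparison with Theorem \ref{th : class formula mod K}. The series $L(\widetilde{E}/\widetilde{\OF})$ lies in $\T_\infty(K)$ and specializes at $z=1$ to $L(E/\OF)\neq0$. Therefore $\widetilde{\alpha}=L(\widetilde{E}/\widetilde{\OF})/\det(M)$ is regular and nonzero at $z=1$, by the previous step. Note that $\widetilde{\alpha}$ is in $\widetilde{K}=K(z)$. Its order at $z-1$ is the same whether it is viewed in $K(z)$ or in the completion used to evaluate, because $z-1$ is prime in $K[z]$ and the valuations are compatible. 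So it has no zero or pole at $z=1$.
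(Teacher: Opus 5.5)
Your overall plan matches the paper's. The identity comes from Theorem~\ref{th : class formula mod K tilde}, and your sign bookkeeping is fine. Regularity of $\det(M)$ at $z=1$ comes from a $z$-independent coordinate system in which the $\widetilde{b}_i$ give a determinant not vanishing at $z=1$. Non-vanishing comes from the independence of the $u_i$, and $\widetilde{\alpha}$ is handled by division using $L(\widetilde{E}/\widetilde{\OF})|_{z=1}=L(E/\OF)\neq 0$. The paper extends $(\widetilde{b}_i)$ to a full basis and uses the adjugate; you instead pick an $rm\times rm$ minor not vanishing at $z=1$. That is a legitimate variant, and slightly cleaner, since you need not choose the extension so that $\delta(1)\neq 0$.

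\textbf{The gap is the choice of coordinates.} You take the $nm$ scalar coordinates of $F_\infty^n$ coming from a $K_\infty$-basis of $F_\infty$. But the $\widetilde{K_\infty}$-structure on $\LieEt(\widetilde{F_\infty})$ is through $\partial_E$, so $M$ is defined by $\widetilde{u}_j=\sum_i\partial_E(M_{ij})\widetilde{b}_i$. Scalar coordinates are not $\partial_E$-linear unless $\partial_E(\theta)=\theta\I_n$, so $U_0=B\cdot M$ is false in general. For $\Cn$ with $n\geq 2$, for instance, $\partial(a)=d[a]$ mixes coordinates through the $\D_\theta^{(i)}(a)$.

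For the same reason, the independence of the $b_i$ from Lemma~\ref{lem : ev(U cap W) is lattice} is independence for the $\partial_E$-structure, and it does not give a nonzero scalar minor. Take $\Lie_{C^{\otimes 2}}(K_\infty)$ and $e_2=(0,1)^T$. Then $\partial(a)e_2+\partial(c)\theta e_2=(\D_\theta^{(1)}a+\theta\D_\theta^{(1)}c,\ a+\theta c)^T$ vanishes only for $a=c=0$. So $e_2,\theta e_2$ is a $\partial(K_\infty)$-basis, yet its scalar coordinate matrix $\bigl(\begin{smallmatrix}0&0\\ 1&\theta\end{smallmatrix}\bigr)$ is singular.

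The repair is what the paper does: use coordinates with respect to a $\partial_E(K)$-basis $\mathcal{B}$ of $\LieE(F)$. These coordinates have the three properties you need:
\begin{enumerate}
\item they are $\partial_E(\widetilde{K_\infty})$-linear;
\item $\mathcal{B}$ is also a $\T(K_\infty)$-basis of $\LieEt(\T(F_\infty))$, so the coordinate matrices of the $\widetilde{b}_i$ and $\widetilde{u}_i$ have entries in $\T(K_\infty)$;
\item the $\mathcal{B}$-coordinate matrix of $(b_i)$ has rank $rm$.
\end{enumerate}
With this change your minor argument goes through verbatim, and it then amounts to the paper's proof.
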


\begin{proof}
      Making use of Theorem \ref{th : class formula mod K tilde}, we obtain
    $$L(\widetilde{E}/\widetilde{O}_F)=\widetilde{\alpha} \det (M)$$
    for some $\widetilde{\alpha} \in \widetilde{K}^*$. 

We want to prove that $\det(M)$ has no zeros or poles at $z=1$. Let us extend both basis to $\widetilde{K_\infty}$-basis of $\LieEt(\widetilde{F_\infty})$, which we denote by $(\widetilde{u_1},\ldots,\widetilde{u_{nm}})$ and $(\widetilde{b_1},\ldots,\widetilde{b_{nm}})$.
     Denote by $\mathcal{B}$ a $K$-basis of $\LieE(F)$, which will then also be a $\T_\infty(K)$-basis of $\LieEt(\T_\infty(F))$. We have
$$M = \Mat_{(\widetilde{b_i})}(\widetilde{u_i}) = \Mat_{(\widetilde{b_i})}\mathcal{B}\cdot\Mat_{\mathcal{B}}(\widetilde{u_i}).$$
Since $\mathcal{B}$ is a $\T_\infty(K)$-basis of $\LieEt(\T_\infty(F))$, we have
$$\Mat_{\mathcal{B}}(\widetilde{u_i})\in \Mat_{nm\times nm}(\T_\infty(K)).$$ 
Moreover,
$$\Mat_{(\widetilde{b_i})}\mathcal{B}=\frac{1}{\det(\Mat_\mathcal{B}(\widetilde{b_i}))}\Mat_\mathcal{B}(\widetilde{b_i})^*\in\frac{1}{\delta(z)}\Mat_{nm\times nm}(\T_\infty(K)),$$
where $\delta(z)=\det(\Mat_\mathcal{B}(\widetilde{b_i}))\in K[z]$, and $*$ denotes here the adjugate matrix, \textit{i.e.} the transpose of the cofactor matrix. Note that $(\widetilde{b_i}|_{z=1})$ is a $K$-basis of $\LieE(F)$. Hence
$$\delta(1)=\det(\Mat_\mathcal{B}(b_i))\neq 0.$$
It follows that 
$$ \Mat_{(\widetilde{b_i})}(\widetilde{u_i}) \in \frac{1}{\delta(z)}\Mat_{nm\times nm}(\T_\infty(K)),$$
$$ M \in \frac{1}{\delta(z)}\Mat_{rm\times rm}(\T_\infty(K)).$$
Hence, $\det(M)$ does not have a pole at $z=1$. It does not have a zero either, since
$$\det(M)|_{z=1}\in K^* [\LieE(\OF)\cap W:U(E/\OF)\cap W]_{A}\in K_\infty^*.$$

Since both $L(\widetilde{E}/\widetilde{O}_F)$ and $\det(M)$ do not have a zero or a pole at $z=1$, the same must be true for $\widetilde{\alpha}.$
    
\end{proof}

%%%%%%%%%%%%%%%%%%%%%%%%%%%%%%%%%%%%%%%%%%%%%%%%%%%%%%%%%%%%%%%%%%%%%%%%%%%%%%%%%%%%%%%%%%%%%%%%%%%%%%%%%%%%%%%%%%
%%%%%%%%%%%%%%%%%%%%%%%%%%%%%%%%%%%%%%%%%%%%%%%%%%%%%%%%%%%%%%%%%%%%%%%%%%%%%%%%%%%%%%%%%%%%%%%%%%%%%%%%%%%%%%%%%%
%%%%%%%%%%%%%%%%%%%%%%%%%%%%%%%%%%%%%%%%%%%%%%%%%%%%%%%%%%%%%%%%%%%%%%%%%%%%%%%%%%%%%%%%%%%%%%%%%%%%%%%%%%%%%%%%%%

\section{A $P$-adic reduced class formula}
Let 
\begin{itemize}
    \item $P$ be a monic irreducible element of $A$;
    \item $K_P:=\F_{q^{\deg(P)}}((P))$ be the $P$-adic completion of $K$;
    \item $\widetilde{K_P} := \F_{q^{\deg(P)}}(z)((P))$;
    \item $F_P:=K_P\otimes_{\F_q}F$, \quad $\widetilde{F_P}:=\widetilde{K_P}\otimes_{\F_q}F$;
    \item $\T(K_P) := \F_{q^{\deg(P)}}[z]((P))=\left\{\sum_{i=0}^\infty a_iz^i \in K_P[[z]] : v_P(a_i)\rightarrow \infty\right\};$
    \item $\T_P(K) := \T(K_P) \cap K[[z]] = \left\{\sum_{i=0}^\infty a_iz^i \in K[[z]] : v_P(a_i)\rightarrow \infty\right\};$
    \item $\T(F_P) := F \otimes_K \T(K_P) = \left\{\sum_{i=0}^\infty a_iz^i \in F_P[[z]] : v_P(a_i)\rightarrow \infty\right\};$
    \item $\T_P(F) := \T(F_P) \cap F[[z]] = \left\{\sum_{i=0}^\infty a_iz^i \in F[[z]] : v_P(a_i)\rightarrow \infty\right\};$     
    \item $v_P$ be the $P$-adic valuation on $K_P$ normalized so that $v_P(P)=1$, which we extend by means of the Gauss valuation to $\widetilde{K_P}$.
\end{itemize}

If $E$ is an Anderson module, then the $P$-adic $L$-series associated to $E$ and $\widetilde{E}$ are
$$L_P(E/\OF):=\prod_{Q\neq P}\frac{[\LieE(\OF/Q\OF)]_A}{[E(\OF/Q\OF)]_A} \in K_P,$$
$$L_P(\widetilde{E}/\widetilde{\OF}):=\prod_{Q\neq P}\frac{[\LieEt(\widetilde{\OF}/Q\widetilde{\OF})]_{\widetilde{A}}}{[\widetilde{E}(\widetilde{\OF}/Q\widetilde{\OF})]_{\widetilde{A}}} \in \T_P(K),$$
where the products run over all monic irreducible elements $Q$ of $A$ different from $P$. We refer to \cite[Section 4]{lucas_P-adic_2026} for a proof of the convergence of these series. For any irreducible element $Q\in A_+$, let us denote the local factor at $Q$ by
$$Z_Q(E/\OF):=\frac{[\LieE(\OF/Q\OF)]_A}{[E(\OF/Q\OF)]_A} ,\quad Z_Q(\widetilde{E}/\widetilde{\OF}):=\frac{[\LieEt(\widetilde{\OF}/Q\widetilde{\OF})]_{\widetilde{A}}}{[\widetilde{E}(\widetilde{\OF}/Q\widetilde{\OF})]_{\widetilde{A}}}.$$
Note that formally in $K((z))$, we have
$$L(\widetilde{E}/\widetilde{\OF})=L_P(\widetilde{E}/\widetilde{\OF}) Z_P(\widetilde{E}/\widetilde{\OF}).$$

We can study the $P$-adic convergence of the logarithm and the exponential. Let us denote by $\exp_{\widetilde{E},P}$ (resp. $\log_{\widetilde{E},P}$) the series that is formally the same as $\exp_{\widetilde{E}}$ (resp. $\log_{\widetilde{E}}$), but where we now look at the convergence $P$-adically. It turns out that $\log_{\widetilde{E},P}$ converges on all elements of the Tate algebra of positive $P$-adic valuation, thus defining a function
$$\log_{\widetilde{E},P}:\{x\in \T(F_P)^n :v_P(x) > 0\} \rightarrow \LieEt(\T(F_P)).$$
The series $\exp_{\widetilde{E},P}$ has in this case a smaller convergence domain. We refer again to \cite[Section 4]{lucas_P-adic_2026} for more details. Following the aforementioned paper, we extend the convergence domain of the logarithm. Set $$g(z)=g_P(z):=[\widetilde{E}(\widetilde{\OF}/P\widetilde{\OF})]_{\widetilde{A}}\in A[z],$$
so that $$g(1)=[E(\OF/P\OF)]_A\in A\setminus\{0\}.$$
Take $s\in \N$ such that $q^s\geq n$. If $h(z)=g(z)^{q^s}$, we have $\partial_E(h(z))=h(z) I_n$. Using the fact that the Fitting ideal is contained in the annihilator, we get 
$$\widetilde{E}_{h(z)}(\widetilde{\OF}^n)\in P\widetilde{\OF}^n.$$
Hence we have a well-defined function
\begin{align*}
    \Log_{\widetilde{E},P}:\{x\in \T(F_P)^n :v_P(x) \geq 0\} &\rightarrow \frac{1}{h(z)}\LieEt(\T(F_P)),\\
    x&\mapsto \frac{1}{h(z)}\log_{\widetilde{E},P}(\widetilde{E}_{h(z)}(x)).
\end{align*}
We can of course evaluate at $z=1$, defining a function
\begin{align*}
    \Log_{E,P}:\{x\in F_P^n :v_P(x) \geq 0\} &\rightarrow \LieE(F_P),\\
    x&\mapsto \frac{1}{h(1)}\log_{E,P}(E_{h(1)}(x)).
\end{align*}
These maps extend the usual $P$-adic logarithm, and satisfy the expected properties (see \cite[Lemma 4.17]{lucas_P-adic_2026}). In particular, formally in $\Mat_{n\times n}(\widetilde{F})\{\{\tau\}\}$ we have
$$\Log_{\widetilde{E},P}\exp_{\widetilde{E}} = \exp_{\widetilde{E}}\Log_{\widetilde{E},P} = \I_n.$$

\begin{prop}\label{prop : kernel Log torsion points}
    Let $x\in \OF^n$. Then 
    $$\Log_{E,P}(x)=0$$ 
    if and only if $x$ is a torsion point of $E(\OF)$.
\end{prop}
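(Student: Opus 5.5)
The plan is to reduce everything to the classical properties of the $P$-adic logarithm near $0$. By definition $\Log_{E,P}(x)=\frac{1}{h(1)}\log_{E,P}(E_{h(1)}(x))$, where $y:=E_{h(1)}(x)\in P\OF^n$ has positive $P$-adic valuation. Since $h(1)\neq 0$, we get $\Log_{E,P}(x)=0$ if and only if $\log_{E,P}(y)=0$. Moreover, $x$ is torsion if and only if $y$ is torsion, because $E_{h(1)}$ is multiplication by a nonzero element of $A$. So it suffices to prove that for $y\in\OF^n$ with $v_P(y)>0$, $\log_{E,P}(y)=0$ if and only if $y$ is torsion in $E(\OF)$.

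For the direction ``torsion $\Rightarrow$ $\Log=0$'', suppose $E_a(x)=0$ with $a\in A\setminus\{0\}$. We use the functional equation $\Log_{E,P}\circ E_a=\partial_E(a)\Log_{E,P}$ (from \cite[Lemma 4.17]{lucas_P-adic_2026}, or directly from $\log_E E_a=\partial_E(a)\log_E$ applied to $y$, since $E_a$ preserves positive valuation). This gives $\partial_E(a)\Log_{E,P}(x)=0$. Now $\partial_E(a)=a\I_n+N$ with $N$ nilpotent, and $a\neq 0$ in $K\subset K_P$, so $\partial_E(a)$ is invertible on $F_P^n$. Hence $\Log_{E,P}(x)=0$.

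For the converse, assume $\log_{E,P}(y)=0$ with $v_P(y)>0$. The key step is a local injectivity statement: there is $c>0$ such that $\exp_{E,P}$ converges on $\{w:v_P(w)\geq c\}$ and is inverse to $\log_{E,P}$ there, with $\log_{E,P}$ preserving valuation on this ball. This holds because the coefficients $P_k,Q_k$ have $P$-adic valuations bounded below linearly in $-q^k$-scaled terms; this is the standard argument in \cite[Section 4]{lucas_P-adic_2026}, which I will invoke. Next choose $b=P^{N}$ (or a suitable power of $h(1)$) so that $y':=E_{b}(y)$ satisfies $v_P(y')\geq c$. For $b=P^N$ this works because $E_P$ raises valuation on $\{v_P>0\}$: $\partial_E(P)=P\I_n+N'$ with $N'$ nilpotent, and the higher $\tau$-terms raise valuation. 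Iterating $n$ times absorbs the nilpotent part. Then $\log_{E,P}(y')=\partial_E(b)\log_{E,P}(y)=0$, and local injectivity gives $y'=0$, i.e.\ $E_b(y)=0$, so $y$ (and hence $x$) is torsion.

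I expect the main obstacle to be this local injectivity step, namely controlling the $P$-adic valuations of the coefficients $P_k$ and $Q_k$ in the matrix (non-Drinfeld) setting, where $\partial_E(\theta)$ is not scalar. If the citation to Lucas does not directly provide $\exp_{E,P}\log_{E,P}=\mathrm{id}$ on a small ball, I would instead argue with an isometry: show that $\log_{E,P}(w)=w+(\text{terms of strictly larger valuation})$ for $v_P(w)$ large. This follows since $P_k\in\Mat_{n\times n}(F)$ has a denominator valuation growing at most linearly in $k$ while $v_P(w^{(k)})=q^kv_P(w)$. It implies $v_P(\log_{E,P}(w))=v_P(w)$, which gives injectivity near $0$ without needing the exponential.
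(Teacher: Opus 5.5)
The paper does not prove this itself; it only cites \cite[Proposition 4.20]{lucas_P-adic_2026}. Your proposal is a correct, self-contained proof by a different route. You reduce to $y=E_{h(1)}(x)$ in the open unit ball. You get ``torsion $\Rightarrow$ $\Log_{E,P}(x)=0$'' from the functional equation and the invertibility of $\partial_E(a)$. For the converse, you contract $y$ with $E_{P^N}$ into a ball on which $\log_{E,P}$ is an isometry. Compared with the citation, this gains independence from Lucas's analytic results. The price is that you must supply the coefficient estimate yourself. Of your two options, the isometry fallback is the cleaner one, since it avoids $\exp_{E,P}$ entirely.

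Three points need tightening.

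(1) $E_P$ by itself need not raise valuations. For $C^{\otimes 2}$, the matrix $\partial(P)$ has the $P$-adic unit $\D_\theta^{(1)}(P)$ above the diagonal. Hence $v_P(E_P(w))=v_P(w)$ for $w=(0,w_2)^T$ with $v_P(w_2)>0$. Take instead $b=P^{q^s}$ with $q^s\geq n$. Then $\partial_E(b)=b\,\I_n$ and $v_P(E_b(w))\geq\min\bigl(v_P(w)+1,\,q\,v_P(w)\bigr)$, and iterating sends $v_P$ to infinity. Your remark about ``iterating $n$ times'' is the right idea, but state it this way.

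(2) Drop the alternative ``a suitable power of $h(1)$''. The element $h(1)$ can be prime to $P$: for $C^{\otimes n}$ one has $g(1)=P^n-1$. In that case $E_{h(1)}$ does not shrink the ball at all.

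(3) The bound $v_P(P_k)\geq -Ck$ is true, but in the non-scalar case it needs its short proof. Set $N:=\partial_E(\theta)-\theta\I_n$. Comparing coefficients of $\tau^k$ in $\log_E E_\theta=\partial_E(\theta)\log_E$ gives $(\theta^{q^k}-\theta)P_k+P_kN^{(k)}-NP_k=-\sum_{j\geq 1}P_{k-j}E_j^{(k-j)}$. The operator $X\mapsto XN^{(k)}-NX$ is nilpotent of order at most $2n-1$ and preserves integrality at $P$, and $v_P(\theta^{q^k}-\theta)\leq 1$. Therefore $v_P(P_k)\geq \min_{j\geq 1} v_P(P_{k-j})-(2n-1)$, and by induction $v_P(P_k)\geq -(2n-1)k$. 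It follows that $v_P(P_k w^{(k)})>v_P(w)$ for all $k\geq 1$ once $v_P(w)>(2n-1)/(q-1)$. Thus $\log_{E,P}(w)-w$ has strictly larger valuation than $w$, which is exactly the local injectivity your converse uses.
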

\begin{proof}
    See \cite[Proposition 4.20]{lucas_P-adic_2026}.
\end{proof}

The $A$-action on $\LieE(F_P)$ can be extended to a $K_P$-action, thus making $\LieE(F_P)$ into a $K_P$-vector space of dimension $nm$ (see \cite[Proposition 4.1]{lucas_P-adic_2026}). We also have that $\LieE(\OF)$ is an $A$-lattice in $\LieE(F_P)$. Moreover, $\LieEt(\widetilde{F_P)}$ is a $\widetilde{K_P}$-vector space of dimension $nm$, and $\LieEt(\widetilde{\OF})$ is an $\widetilde{A}$-lattice in $\LieEt(\widetilde{F_P})$.

\begin{defin}
    Let $V \subseteq \LieE(F_\infty)$ be a sub-$K_\infty$-vector space of dimension $s$, and let
    $$\Lambda=\oplus_{i=1}^{s} \partial_E(A)b_i \subseteq \Lie_E(F)\cap V, \quad \Lambda' =\oplus_{i=1}^{s} \partial_E(A)u_i \subseteq U(E/\OF)\cap V$$
    be two $A$-lattices in $V$. Define their $P$-adic ratio of covolumes as
    $$[\Lambda : \Lambda']_{A,P} := \frac{\det_{(b_i)}(\Log_{E,P}(\exp_{E}(u_i)))}{\sgn(\det_{(b_i)}(u_i))}.$$
    Note that the determinant is computed over the $K_P$-vector space $\LieE(F_P)$. As before, the definition does not depend on the choice of basis. The same definition can also be made with the twist by $z$.
\end{defin}

We now prove a $P$-adic version of Proposition $\ref{prop : class formula mod K integral level}$.
\begin{prop} \label{prop : P-adic class formula integral level}
    Let $E/\OF$ be an $A$-finite Anderson module such that $\sigma  N_E(L)\subseteq (t-\theta)N_E(L)$. Take two $\widetilde{A}$-basis, $(\widetilde{b}_i)$ and $(\widetilde{u}_i)$, as in Lemma \ref{lem : ev(U cap W) is lattice}.
    Let 
    $$M=\Mat_{(\widetilde{b_i})}(\Log_{\widetilde{E},P}(\exp_{\widetilde{E}}(\widetilde{u_i})))\in \Mat_{rm\times rm}(\widetilde{K_P}).$$ 
    Then 
    $$L_P(\widetilde{E}/\widetilde{O}_F)=\det (M)\cdot \widetilde{\alpha}$$
    for some $\widetilde{\alpha} \in \widetilde{K}^*$ that does not have a zero or a pole at $z=1$. Moreover, $\det(M)$ does not have a pole at $z=1$.
\end{prop}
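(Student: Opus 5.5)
The plan is to start from Proposition \ref{prop : class formula mod K integral level}, which in $K((z))$ gives
$$L(\widetilde{E}/\widetilde{O}_F)=\det(M_\infty)\cdot\widetilde{\alpha}_\infty,\qquad M_\infty=\Mat_{(\widetilde{b_i})}(\widetilde{u_i}),$$
with $\widetilde{\alpha}_\infty\in\widetilde{K}^*$ having neither a zero nor a pole at $z=1$. Then transport this identity to the $P$-adic side using the factorisation $L(\widetilde{E}/\widetilde{\OF})=L_P(\widetilde{E}/\widetilde{\OF})Z_P(\widetilde{E}/\widetilde{\OF})$ in $K((z))$. The key point is that all the objects involved admit expansions in $K[[z]]$ (or $\frac{1}{\delta(z)h(z)}$ times such), and these formal expansions can be read both $\infty$-adically and $P$-adically.

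\textbf{Step 1: comparing the two matrices formally.} Each $\widetilde{u_i}$ lies in $U(\widetilde{E}/\OF[z])$, so $\exp_{\widetilde{E}}(\widetilde{u_i})\in\OF[z]^n$. I would show that, as formal power series in $z$ with coefficients in $\LieE(F)$, we have
$$\Log_{\widetilde{E},P}(\exp_{\widetilde{E}}(\widetilde{u_i}))=\widetilde{u_i}.$$
This holds because $\Log_{\widetilde{E},P}\exp_{\widetilde{E}}=\I_n$ formally. Coefficientwise in $z$ this is a finite computation: the $z^k$-coefficient only involves finitely many $\tau$-terms, thanks to the factor $z^j$ in front of $\tau^j$.

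So $\widetilde{u_i}$ is a single element of $F[[z]]^n$. It converges $\infty$-adically, lying in $\T_\infty(F)$, and also converges $P$-adically, lying in $\frac{1}{h(z)}\T(F_P)$. Hence the matrix $M$ of the statement and $M_\infty$ are the same element of $\frac{1}{\delta(z)h(z)}\Mat(K[[z]])$, viewed in two completions. The argument with the adjugate from the previous proof shows the coordinates lie in $\frac{1}{\delta(z)}K[[z]]$, with $\delta(1)\neq 0$.

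\textbf{Step 2: deducing the formula.} It follows that
$$L_P(\widetilde{E}/\widetilde{\OF})=\det(M)\cdot\widetilde{\alpha}_\infty\, Z_P(\widetilde{E}/\widetilde{\OF})^{-1}$$
in $K((z))$. Set $\widetilde{\alpha}:=\widetilde{\alpha}_\infty Z_P(\widetilde{E}/\widetilde{\OF})^{-1}\in\widetilde{K}^*$. Both sides also make sense in $\T_P(K)\otimes$ rational functions, so the identity holds in $\widetilde{K_P}$.

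\textbf{Step 3: behaviour at $z=1$.}
\begin{itemize}
\item $Z_P(\widetilde{E}/\widetilde{\OF})=[\LieEt(\widetilde{\OF}/P\widetilde{\OF})]/g(z)$. At $z=1$ its numerator and denominator specialise to the nonzero elements $[\LieE(\OF/P\OF)]_A$ and $g(1)$, so $Z_P$ has neither a zero nor a pole at $z=1$. Hence neither does $\widetilde{\alpha}$.
\item For $\det(M)$: the coordinates of $\Log_{\widetilde{E},P}(\exp_{\widetilde{E}}(\widetilde{u_i}))$ in the basis $\mathcal{B}$ lie in $\frac{1}{h(z)}\T_P(K)$. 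Changing to the basis $(\widetilde{b_i})$ costs a factor $1/\delta(z)$. Since $h(1)=g(1)^{q^s}\neq0$ and $\delta(1)\neq0$, $\det(M)$ is regular at $z=1$.
\end{itemize}

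\textbf{Main obstacle.} The delicate point is Step 1: justifying that a formal identity in $F[[z]]$ yields an equality of the $P$-adically convergent elements. I would argue that convergence in $\T(F_P)$ is coefficientwise compatible with the formal expansion, and that the $z$-adic coefficients are computed by finite sums identical in both settings. The only subtlety is the denominator $h(z)$, which is handled by multiplying through by $h(z)$ before comparing.
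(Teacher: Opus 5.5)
Your proposal is correct and follows the paper's proof. Both arguments use the same steps: the formal identity $\Log_{\widetilde{E},P}(\exp_{\widetilde{E}}(\widetilde{u_i}))=\widetilde{u_i}$ in $F((z))^n$, transport of Proposition \ref{prop : class formula mod K integral level} through $L(\widetilde{E}/\widetilde{\OF})=L_P(\widetilde{E}/\widetilde{\OF})Z_P(\widetilde{E}/\widetilde{\OF})$, and the bound $M\in\frac{1}{h(z)\delta(z)}\Mat_{rm\times rm}(\T_P(K))$ with $h(1)\delta(1)\neq 0$. You also make explicit two points the paper leaves tacit: why $Z_P(\widetilde{E}/\widetilde{\OF})$ has no zero or pole at $z=1$, and why an identity in $K((z))$ between such elements is also an identity in $\widetilde{K_P}$.
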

\begin{proof}
Note that formally in $F((z))^n$, we have 
$$\widetilde{u_i} = \Log_{\tilde{E},P}(\exp_{\tilde{E}}(\widetilde{u_i})).$$
Hence, by Proposition \ref{prop : class formula mod K integral level}, formally in $K((z))$ we have
$$L(\widetilde{E}/\widetilde{\OF})=\det\left(M)\right)\cdot \widetilde{\beta}$$
for some $\widetilde{\beta}$ that does not have a zero or a pole at $z=1$. Hence
$$L_P(\widetilde{E}/\widetilde{\OF})= \det(M)\cdot\underbrace{\left(Z_P(\widetilde{E}/\widetilde{\OF})^{-1}\widetilde{\beta}\right)}_{\widetilde{\alpha}}.$$
Since the local factor $Z_P(\widetilde{E}/\widetilde{\OF}$ does not have a zero or a pole at $z=1$, the same is true for $\widetilde{\alpha}$.

We recover the notation of the proof of Proposition \ref{prop : class formula mod K integral level}. Denote by $\mathcal{B}$ a $\partial_E(K)$-basis of $\LieE(F)$, which will then also be a $\T_\infty(K)$-basis of $\LieEt(\T_\infty(F))$ and a $\T_P(K)$-basis of $\LieEt(\T_P(F))$. Let $\delta(z)=\det(\Mat_\mathcal{B}(\widetilde{b_i}))\in K[z]$, and observe that $\delta(1)\neq 0$. Recall that $\widetilde{u_i}\in \Log_{\tilde{E},P}(\OF[z])\in \frac{1}{h(z)}\LieEt(\T_P(F))$, with $h(1)\neq 0$. We have
$$\Mat_{(\widetilde{b_i})}(\widetilde{u_i}) = \Mat_{(\widetilde{b_i})}\mathcal{B}\cdot\Mat_{\mathcal{B}}(\widetilde{u_i}) \in \frac{1}{h(z)\delta(z)}\Mat_{nm\times nm}(\T_P(K)),$$
$$M \in \frac{1}{h(z)\delta(z)}\Mat_{rm\times rm}(\T_P(K)).$$
Hence $\det(M)$ does not have a $P$-adic pole at $z=1$.
\end{proof}
  
\begin{theorem}[$P$-adic reduced class formula]\label{th : P-adic formula W}
    Let $E/\OF$ be an $A$-finite Anderson module such that $\sigma  N_E(L)\subseteq (t-\theta)N_E(L)$. Over $\widetilde{K_P}$, we have the equality
    $$L_P(\widetilde{E}/\widetilde{\OF})= [\LieEt(\widetilde{\OF})\cap \widetilde{W} : U(\widetilde{E}/\widetilde{\OF})\cap \widetilde{W}]_{\widetilde{A},P} \cdot \widetilde{\alpha}$$
    for some $\widetilde{\alpha}\in \widetilde{K}^*$. Over $K_P$, we have
    $$L_P(E/\OF)= [\LieE(\OF)\cap W : U(E/\OF)\cap W]_{A,P}\cdot \alpha$$
    for some $\alpha\in K^*$.
\end{theorem}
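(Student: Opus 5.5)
The plan is to derive both equalities from Proposition \ref{prop : P-adic class formula integral level} by identifying $\det(M)$ with the $P$-adic ratio of covolumes up to an element of $\widetilde{K}^*$, and then specializing at $z=1$.

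\textbf{The twisted equality.} Fix the bases $(\widetilde{b}_i)$ of $\LieEt(\widetilde{\OF})\cap\widetilde{W}$ and $(\widetilde{u}_i)$ of $U(\widetilde{E}/\widetilde{\OF})\cap\widetilde{W}$ given by Lemma \ref{lem : ev(U cap W) is lattice}. By definition of the $P$-adic ratio of covolumes in the twisted setting,
$$[\LieEt(\widetilde{\OF})\cap \widetilde{W} : U(\widetilde{E}/\widetilde{\OF})\cap \widetilde{W}]_{\widetilde{A},P}=\frac{\det(M)}{\sgn(\det_{(\widetilde{b}_i)}(\widetilde{u}_i))}.$$
The sign in the denominator is a leading coefficient in $\F_q(z)^*\subset\widetilde{K}^*$. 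Since the ratio does not depend on the choice of bases, Proposition \ref{prop : P-adic class formula integral level} immediately gives the first equality, with $\widetilde{\alpha}$ replaced by $\widetilde{\alpha}\cdot\sgn(\det_{(\widetilde b_i)}(\widetilde u_i))$.

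\textbf{Specialization at $z=1$.} I first need the sign, an element $c(z)\in\F_q(z)^*$, to have neither zero nor pole at $z=1$. By Proposition \ref{prop : class formula mod K integral level}, $\det_{(\widetilde b_i)}(\widetilde u_i)$ lies in $\frac{1}{\delta(z)}\T_\infty(K)$ with $\delta(1)\neq0$, and its value at $z=1$ is nonzero. Its leading $1/\theta$-coefficient should then specialize to $\sgn(\det_{(b_i)}(u_i))\ne0$. This requires the $\infty$-adic valuation to be preserved under $z\mapsto1$, and I would check it by multiplying through by $\delta(z)$.

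Now set $z=1$ in $L_P(\widetilde{E}/\widetilde{\OF})=\det(M)\,\widetilde{\alpha}\,c(z)^{-1}$.
\begin{itemize}
\item The left side lies in $\T_P(K)$, so it specializes to $L_P(E/\OF)$.
\item By Proposition \ref{prop : P-adic class formula integral level}, $\det(M)\in\frac{1}{(h\delta)^{rm}}\T_P(K)$, so evaluating at $z=1$ is $P$-adically legitimate.
\item $\widetilde{\alpha}(1)\in K^*$.
\end{itemize}

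\textbf{Main obstacle.} It remains to show that $\det(M)|_{z=1}=\det_{(b_i)}\bigl(\Log_{E,P}(\exp_E(u_i))\bigr)$. Two facts are needed.

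First, evaluation at $z=1$ must commute with $\Log_{\widetilde E,P}\circ\exp_{\widetilde E}$ applied to $\widetilde u_i\in U(\widetilde E/\OF[z])$. Here $\exp_{\widetilde E}(\widetilde u_i)\in\OF[z]^n$ is a polynomial, its specialization is $\exp_E(u_i)$, and $\Log_{\widetilde E,P}$ is given by a series whose terms carry $z^k$ with coefficients tending to zero $P$-adically. So evaluation termwise is justified in the Tate algebra $\T(F_P)$, with $h(z)\mapsto h(1)$.

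Second, the coordinates with respect to $(\widetilde b_i)$ must specialize to the coordinates with respect to $(b_i)$. This follows from the $\mathcal B$-coordinate description together with $\delta(1)\ne0$.

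Finally, $(b_i)$ and $(u_i)$ span $A$-lattices in $W$ by Lemma \ref{lem : ev(U cap W) is lattice}, although these may be sublattices of $\LieE(\OF)\cap W$ and $U(E/\OF)\cap W$. Changing lattice changes the ratio by an index, namely a determinant of a matrix over $K$, which lies in $K^*$. Likewise the sign denominators differ by a factor in $\F_q^*$. Absorbing all these factors into $\alpha\in K^*$ yields the second equality.
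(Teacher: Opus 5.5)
Your route is the paper's. The twisted identity is Proposition~\ref{prop : P-adic class formula integral level} with the sign $c(z):=\sgn(\det_{(\widetilde b_i)}(\widetilde u_i))\in\F_q(z)^*$ moved into the constant; this sign, taken from the $\infty$-adic determinant, is the one the definition calls for. The untwisted identity comes from specializing that proposition at $z=1$ and passing from the sublattices spanned by $(b_i)$, $(u_i)$ to $\LieE(\OF)\cap W$ and $U(E/\OF)\cap W$, absorbing the transition determinants and $\sgn(\det_{(c_i)}(w_i))\in\F_q^*$ into $\alpha$. Your justification that $\det(M)|_{z=1}=\det_{(b_i)}(\Log_{E,P}(\exp_E(u_i)))$ is more explicit than the paper's, and it is fine.

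One step has to go, because it is false in general and your proposed check cannot prove it. You claim that $c(z)$ has neither zero nor pole at $z=1$ and specializes to $\sgn(D(1))$, where $D(z):=\det_{(\widetilde b_i)}(\widetilde u_i)$. The Gauss valuation can jump under $z\mapsto 1$. For instance, $D(z)=1+(z-1)\theta$ lies in $K[z]\subset\frac{1}{\delta(z)}\T_\infty(K)$ and has $D(1)=1\neq 0$. Yet $\sgn(D)=z-1$ vanishes at $z=1$, while $\sgn(D(1))=1$. A pole in the sign is equally possible, e.g.\ $D=1/\delta$ with $\delta=1+(z-1)\theta$. Multiplying through by $\delta(z)$ cannot help, since the first example already lies in $K[z]$.

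This failure also affects your bullet list. In your equation $L_P(\widetilde E/\widetilde\OF)=\det(M)\,\widetilde\alpha\,c(z)^{-1}$, the constant $\widetilde\alpha$ is the Proposition's constant $\widetilde\beta$ times $c(z)$. So when $c(1)=0$, $\widetilde\alpha$ also vanishes at $z=1$, and your bullet ``$\widetilde\alpha(1)\in K^*$'' fails; only the product $\widetilde\alpha\,c(z)^{-1}=\widetilde\beta$ behaves well.

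The repair is simple. Do not specialize the twisted theorem; specialize $L_P(\widetilde E/\widetilde\OF)=\det(M)\,\widetilde\beta$ directly. Proposition~\ref{prop : P-adic class formula integral level} already guarantees that $\widetilde\beta$ has neither zero nor pole at $z=1$. The untwisted sign $\sgn(\det_{(c_i)}(w_i))$ is then inserted by hand at the end, as the paper does, so the twisted sign never needs to be evaluated. With this change the rest of your argument goes through unchanged.
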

\begin{proof}
 Take $\widetilde{u_1},\ldots,\widetilde{u_{rm}}\in U(\widetilde{E}/\OF[z]) \cap \widetilde{W}$ and $\widetilde{b_1},\ldots,\widetilde{b_{rm}}\in \Lie_E(\widetilde{\OF}[z])\cap \widetilde{W}$ as in Lemma \ref{lem : ev(U cap W) is lattice}. By the preceding proposition,
 $$L_P(\widetilde{E}/\widetilde{O}_F)=\det (M)\cdot \widetilde{\beta}$$
for some $\widetilde{\beta} \in \widetilde{K}^*$. It follows that
$$L_P(\widetilde{E}/\widetilde{\OF})= [\LieEt(\widetilde{\OF})\cap \widetilde{W} : U(\widetilde{E}/\widetilde{\OF})\cap \widetilde{W}]_{\widetilde{A},P}\cdot\overbrace{\left(\sgn(\det(M))\widetilde{\beta}\right)}^{\widetilde{\alpha}}.$$
This proves the first assertion.

Evaluating the equation of Proposition \ref{prop : P-adic class formula integral level} \ at $z=1$, we obtain
$$L_P(E/\OF)= \det_{(b_i)}(\Log_{E,P}(\exp_{E}(u_i))) \cdot\beta ,$$ 
for some $\beta \in K^*$.
Suppose 
$$\LieE(A) \cap W = \oplus_{i=1}^{rm} \partial_E(A)c_i,\quad U(E/\OF) \cap W = \oplus_{i=1}^{rm} \partial_E(A) w_i.$$
Note that
\begin{multline*}
    \Mat_{(b_i)}\left(\Log_{E,P}(\exp_E(u_i))\right) = \\ 
    \Mat_{(b_i)}\left(c_i\right) \cdot \Mat_{(c_i)}\left(\Log_{E,P}(\exp_E(w_i))\right) \cdot \Mat_{(w_i)}\left(u_i\right).
\end{multline*}
Setting
$$\alpha = \beta \cdot \det_{(b_i)}\left(c_i\right) \cdot \det_{(w_i)}\left(u_i\right) \cdot \sgn(\det_{(c_i)}(w_i)) \in K^*,$$
we obtain the desired result.
\end{proof}

\begin{cor}\label{cor : vanishing P-adic L}
    Let $E/A$ be an $A$-finite Anderson module of rank $1$ such that $\sigma  N_E(L)\subseteq (t-\theta)N_E(L)$. Then
    $$L_P(E/A)=0$$
    if and only if
    $\exp_E:\LieE(K_\infty)\rightarrow E(K_\infty)$
    is not injective.
\end{cor}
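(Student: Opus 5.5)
We apply Theorem \ref{th : P-adic formula W} with $F=K$, so $m=1$, $\OF=A$, and $r=1$. Then $W$ is a one-dimensional $\partial_E(K_\infty)$-subspace of $\LieE(K_\infty)$. The lattices $\LieE(A)\cap W=\partial_E(A)c$ and $U(E/A)\cap W=\partial_E(A)w$ have rank one. Since $\alpha\in K^*$ is nonzero, the formula reads
$$L_P(E/A)=\alpha\cdot\frac{\det_{(c)}\bigl(\Log_{E,P}(\exp_E(w))\bigr)}{\sgn(\det_{(c)}(w))},$$
so $L_P(E/A)=0$ if and only if $\Log_{E,P}(\exp_E(w))=0$. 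By Proposition \ref{prop : kernel Log torsion points}, applied to $x=\exp_E(w)\in E(A)=A^n$, this holds if and only if $\exp_E(w)$ is a torsion point of $E(A)$. The corollary therefore reduces to showing that $\exp_E(w)$ is torsion exactly when $\exp_E$ is not injective on $\LieE(K_\infty)$.

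Suppose first that $\exp_E(w)$ is torsion, say $E_a(\exp_E(w))=0$ with $a\in A\setminus\{0\}$. Then $\exp_E(\partial_E(a)w)=0$. Since $w$ is part of a $K_\infty$-basis of $W$, it is nonzero, and $\partial_E(a)$ is invertible because $(\partial_E(a)-a\I_n)^n=0$ with $a\neq 0$. Hence $\partial_E(a)w$ is a nonzero element of the kernel, and $\exp_E$ is not injective.

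Conversely, suppose $\exp_E(y)=0$ for some $y\neq 0$ in $\LieE(K_\infty)$. This is the step I expect to be the main obstacle. The kernel $\Lambda=\ker\exp_E\cap\LieE(K_\infty)$ is a discrete $A$-submodule of the period lattice, and it is contained in $U(E/A)$ because $\exp_E(\Lambda)=0\in E(A)$. What I need is that $\Lambda$ has a nonzero element lying in $W$. For this I would use the structure of $W$ from \cite{angles_special_2020}. Under the hypothesis $\sigma N_E(L)\subseteq(t-\theta)N_E(L)$, the complement of $W$ should be the part of $\LieE(K_\infty)$ on which $\exp_E$ is a local isometry identifying $U(E/A)$ with $\LieE(A)$. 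Their proof that $U(E/A)\cap W$ is a lattice in $W$ then shows that $U(E/A)$ decomposes compatibly, with $U(E/A)\cap W$ carrying the rank-$r$ "interesting" part. Concretely, the rank of the torsion-free quotient $U(E/A)/\bigl(U(E/A)\cap W\bigr)$ injects via $\exp_E$ into $E(A)$, so any kernel element must lie, up to torsion, in $W$. Since $\Lambda$ is torsion-free, this gives $0\neq y'\in\Lambda\cap W$.

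Given such a $y'$, write $y'=\partial_E(b)w$ with $b\in A\setminus\{0\}$, using that $\Lambda\cap W\subseteq U(E/A)\cap W=\partial_E(A)w$. Then
$$E_b(\exp_E(w))=\exp_E(\partial_E(b)w)=\exp_E(y')=0,$$
so $\exp_E(w)$ is torsion. As a first attempt at the hard step I would try the direct description of $W$ via $\varphi_k(\nu_1)$: check that $\exp_E$ is injective on the complement of $W$ modulo $\LieE(A)$. Failing that, I would argue by ranks, since $U(E/A)$ has rank $n$ while $\exp_E$ maps $U(E/A)\cap W'$ injectively for a suitable complement $W'$.
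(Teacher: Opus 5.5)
Your proof that $L_P(E/A)=0$ implies $\exp_E$ is not injective follows the paper's argument. Rank one reduces the $P$-adic regulator to the coordinate of $\Log_{E,P}(\exp_E(w))$. Proposition~\ref{prop : kernel Log torsion points} then makes $\exp_E(w)$ torsion, and $\exp_E(\partial_E(a)w)=E_a(\exp_E(w))=0$. Your remark that $\partial_E(a)$ is invertible, so $\partial_E(a)w\neq 0$, fills a step the paper leaves implicit.

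The converse has a genuine gap. It rests entirely on the claim that a nonzero $y\in\ker\exp_E\cap\LieE(K_\infty)$ forces $\ker\exp_E\cap W\neq 0$, and you do not prove this. Neither justification you offer works:
\begin{itemize}
\item $\exp_E$ is not defined on the quotient $U(E/A)/\bigl(U(E/A)\cap W\bigr)$, so ``injects via $\exp_E$'' has no meaning.
\item The rank fallback fails because injectivity of $\exp_E$ on $U(E/A)\cap W'$ says nothing about a kernel element $y=y_W+y_{W'}$ with both components nonzero. From $\exp_E(y)=0$ you only get $\exp_E(y_{W'})=-\exp_E(y_W)$, not $y_{W'}=0$. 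Moreover, $U(E/A)$ need not split along $W\oplus W'$.
\end{itemize}
The paper gives no general structural statement of this kind about $W$. The reduced formula cannot supply one either: it only sees the rank-one lattice $U(E/A)\cap W$, so it is blind to kernel elements of $\exp_E$ not already known to lie in $W$.

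The paper gets this direction from outside the reduced formula. It cites \cite[Theorem 5.2]{lucas_P-adic_2026}, which uses the full $P$-adic class formula; there $L_P(E/A)$ is, up to nonzero factors, the $P$-adic regulator $[\LieE(A):U(E/A)]_{A,P}$ of the \emph{whole} unit module. In that setting the argument is immediate:
\begin{itemize}
\item $\ker\exp_E\cap\LieE(K_\infty)\subseteq U(E/A)$, so write a nonzero kernel element as $y=\sum_i\partial_E(a_i)u_i$ in an $A$-basis $(u_i)$ of $U(E/A)$, with the $a_i\in A$ not all zero.
\item $\Log_{E,P}\circ\exp_E$ is $A$-linear on $U(E/A)$, so $\sum_i\partial_E(a_i)\Log_{E,P}(\exp_E(u_i))=\Log_{E,P}(\exp_E(y))=0$.
\item This is a nontrivial $K_P$-linear relation among the columns of the regulator matrix, so the regulator, and hence $L_P(E/A)$, vanishes.
\end{itemize}
Combined with your first direction, this shows a posteriori that $\ker\exp_E$ does meet $W$. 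That fact is a consequence of the argument, not an input you can use.
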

\begin{proof}
    The implication from right to left is proven in greater generality in \cite[Theorem 5.2]{lucas_P-adic_2026} using the general $P$-adic class formula. 
    
    For the other implication, suppose $L_P(E/A)=0$. Since $E$ is of rank $1$, we have that $U(E/A)\cap W$ is a free $A$-module of rank $1$, so we can write $U(E/A)\cap W = \partial_E(A)u$ for some nonzero $u\in \Lie_E(K_\infty)$. Then by the preceding theorem, the fact that $L_P(E/A)=0$ implies that
    $$\Log_{E,P}(\exp_E(u))=0.$$
    By Proposition \ref{prop : kernel Log torsion points}, this in turn implies that 
    $$\exp_E(u)\in E(A)_{tors}.$$
    Therefore, for some nonzero $a\in A$ we have
    $$0=E_a(\exp_E(u))=\exp_E(\partial_E(a)u),$$
    which means that the exponential is not injective.
\end{proof}

\begin{remark}\label{rem : 1}
    The proof of Corollary \ref{cor : vanishing P-adic L} tells us that for such Anderson modules $E$, if $L_P(E/A)=0$, then $\exp_E(u)\in E(A)_{tors}$ for all $u\in U(E/A)\cap W$.
\end{remark}

%%%%%%%%%%%%%%%%%%%%%%%%%%%%%%%%%%%%%%%%%%%%%%%%%%%%%%%%%%%%%%%%%%%%%%%%%%%%%%%%%%%%%%%%%%%%%%%%%%%%%%%%%%%%%%%%%%
%%%%%%%%%%%%%%%%%%%%%%%%%%%%%%%%%%%%%%%%%%%%%%%%%%%%%%%%%%%%%%%%%%%%%%%%%%%%%%%%%%%%%%%%%%%%%%%%%%%%%%%%%%%%%%%%%%
%%%%%%%%%%%%%%%%%%%%%%%%%%%%%%%%%%%%%%%%%%%%%%%%%%%%%%%%%%%%%%%%%%%%%%%%%%%%%%%%%%%%%%%%%%%%%%%%%%%%%%%%%%%%%%%%%%

\section{Order of vanishing of $L_P(E/A)$} \label{sec : ord van L_P}
Let $v$ be either the $\infty$-adic or a $P$-adic place. Let $f\in \T(K_v)$. The order of vanishing at $z=1$ of $f$ is the largest integer $k_0\geq 0$ such that $(z-1)^{k_0} | f$.

Take $k\in \N$. The $k$-th hyperderivative with respect to $z$ is the $K_v$-linear map
$$\mathcal{D}_z^{(k)}:K_v[[z]] \longrightarrow K_v[[z]]$$
given by
$$\D_z^{(k)}(z^m)=\binom{m}{k}z^{m-k}, \quad m\geq 0,$$
where we say that $\binom{m}{k}=0$ if $k>m$.
Note that
$$\D_z:=\D_z^{(1)}$$
is just the standard derivative $d/dz$. We refer to \cite{jeong_calculus_2011} for more details about hyperderivatives. The main property we will use is the product formula
$$\D_z^{(k)}(fg) = \sum_{i+j=k}\D_z^{(i)}(f)\cdot\D_z^{(j)}(g),\qquad f,g\in K_v[[z]].$$
If $f\in \T_v(K)$, then the Taylor expansion
$$f=\sum_{k\geq 0} (\D_z^{(k)}f)|_{z=1}(z-1)^k$$
tells us that the order of vanishing of $f$ at $z=1$ is $k_0$ if and only if $k_0$ is the largest nonnegative integer such that $(\D_z^{(k_0)}f)|_{z=1}\neq 0$. 

\begin{defin}
    Let $E/\OF$ be an Anderson module. We define the order of vanishing of $L_P(E/\OF)$ as the order of vanishing at $z=1$ of $L_P(\widetilde{E}/\widetilde{\OF})$.
\end{defin}

We will make use of hyperderivatives to study the order of vanishing of $P$-adic $L$-series. For any $k\in \N$, set
$$L_P^{(k)}(E/A) := \left(\D_z^{(k)} L_P(\widetilde{E}/\widetilde{A})\right)|_{z=1}.$$

In this section, we will work with Anderson modules such that the space $W$ is of dimension $1$. More concretely, we will consider modules $E/A$ of rank $1$. In this case, the $P$-adic ratio of covolumes of Theorem \ref{th : P-adic formula W} is much easier to compute, since it is given by the single entry of a $1\times 1$ matrix.

Note that $\D_z^{(k)}$ can be naturally extended to a map
$$\D_z^{(k)}:\Mat_{n\times n}(K_v)[[z]] \rightarrow \Mat_{n\times n}(K_v)[[z]].$$
\begin{lemma} \label{lem : d(g)'=0 iff g'=0}
    Let $v$ denote either the $\infty$-adic or a $P$-adic valuation on $K$, and  let $E/A$ be an Anderson module. Then
    $$\D_z^{(k)}\partial_E(\widetilde{g})=\partial_E\left(\D_z^{(k)}\widetilde{g}\right)$$
    for any $\widetilde{g}\in \widetilde{K_v}$ and any $k\in \N$.
\end{lemma}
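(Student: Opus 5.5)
The key observation is that $\partial_E$ restricted to $A$ is a polynomial map with coefficients independent of $z$. Write $\partial_E(\theta)=D\in\Mat_{n\times n}(A)$. Since $\partial_E$ is an $\F_q$-algebra morphism, for any $a=\sum_j c_j\theta^j\in A$ we have $\partial_E(a)=\sum_j c_j D^j$. I will first reduce to the statement that $\partial_E$, extended to $\widetilde{K_v}$ (or to the relevant completion inside $K_v[[z]]$), acts coefficientwise in $z$.

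\textbf{Step 1: polynomials in $z$ with coefficients in $A$.} Take $\widetilde{g}=\sum_m a_m z^m$ with $a_m\in A$. By $\F_q(z)$-linearity of the twisted $\partial_E$ we get $\partial_E(\widetilde g)=\sum_m \partial_E(a_m)z^m$. Applying $\D_z^{(k)}$ and using its $K_v$-linearity (entrywise on matrices) gives
\[
\D_z^{(k)}\partial_E(\widetilde g)=\sum_m \binom{m}{k}\partial_E(a_m)z^{m-k}=\partial_E\Big(\sum_m\binom{m}{k}a_m z^{m-k}\Big)=\partial_E(\D_z^{(k)}\widetilde g).
\]

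\textbf{Step 2: extension by continuity to coefficients in $K_v$.} The action of $A$ on $\Lie_E(K_v)$ extends continuously to $K_v$, as recalled in the paper. Concretely, $\partial_E(a)=a\I_n+N(a)$ with $N$ nilpotent, and this extends to $K_v$ via the continuous map $x\mapsto \sum_{i<n} \frac{1}{i!}$-free Taylor-type expression. Rather than making that expression explicit, I will use that $\partial_E:K_v\to\Mat_{n\times n}(K_v)$ is a continuous $\F_q$-linear map. Define $\partial_E$ on $K_v[[z]]$ coefficientwise. Then Step 1 applies verbatim to $\widetilde g=\sum_m a_m z^m$ with $a_m\in K_v$, because $\D_z^{(k)}$ only reshuffles and scales coefficients by integers in $\F_p$, and $\partial_E$ is $\F_q$-linear. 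So the identity holds on all of $K_v[[z]]$, and in particular on $\T(K_v)$.

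\textbf{Step 3: rational functions in $z$.} Elements of $\widetilde{K_v}$ such as $1/(z-1)$ need more care, and I expect this to be the main point. I would view $\widetilde{K_v}$ as embedded in a ring of Laurent-type expansions on which $\D_z^{(k)}$ is defined; for example, expand in $z$ inside $K_v((z))$, where $\D_z^{(k)}(z^m)=\binom{m}{k}z^{m-k}$ makes sense for negative $m$. Every element of $\F_q(z)\otimes K_v$ embeds there, and so does the completion $\widetilde{K_v}$ for the Gauss norm after expanding each $\F_q(z)$-coefficient. Since $\partial_E$ is $\F_q(z)$-linear, and $\D_z^{(k)}$ is continuous for the Gauss valuation because it does not decrease it, the coefficientwise argument again gives the identity. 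Alternatively, I could avoid these expansions by using the structural fact that $\partial_E(\widetilde g)$ is a matrix whose entries are $\F_q$-linear combinations of $z$-independent continuous operators applied to $\widetilde g$. Then $\D_z^{(k)}$ commutes with each such operator because it commutes with multiplication by $z$-independent scalars in $K_v$.
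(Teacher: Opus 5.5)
Your Step 1 is the paper's first step. Step 2 is harmless but mostly off target: it proves the identity on $K_v[[z]]$ with a coefficientwise $\partial_E$, and $K_v[[z]]$ is not contained in $\widetilde{K_v}$ (think of $\sum_m\theta^m z^m$). So it only covers elements lying compatibly in both rings, such as those of $\T(K_v)$.

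The real problem is Step 3. The claim that the Gauss-norm completion $\widetilde{K_v}$ embeds into $K_v((z))$ ``after expanding each $\F_q(z)$-coefficient'' is false. Take $\widetilde g=\sum_{i\geq 0}z^{-i}\theta^{-i}$. It converges in $\widetilde{K_\infty}=\F_q(z)((1/\theta))$ since $v_\infty(z^{-i}\theta^{-i})=i$; it is just the rational function $z\theta/(z\theta-1)$. Yet its coefficientwise expansion has the nonzero coefficient $\theta^{-i}$ in front of $z^{-i}$ for every $i\geq 0$, so it is not a Laurent series in $z$. The Gauss topology and the $z$-adic one do not match, and your coefficientwise argument in $K_v((z))$ cannot be transported to $\widetilde{K_v}$ this way.

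Your fallback does not close the gap either. One has $\partial_E(x)=\sum_{j<n}\D_\theta^{(j)}(x)\,(\partial_E(\theta)-\theta\I_n)^j$. So the $z$-independent operators in question are the hyperderivatives $\D_\theta^{(j)}$ (suitably extended to $\widetilde{K_v}$), not multiplications by scalars. The commutation $\D_z^{(k)}\D_\theta^{(j)}=\D_\theta^{(j)}\D_z^{(k)}$ on $\widetilde{K_v}$ is a statement of exactly the same kind as the lemma, so invoking it just moves the problem.

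What is missing is a density argument carried out inside $\widetilde{K_v}$ itself, and you already have the ingredients. Both operators are continuous for the Gauss valuation. For $\partial_E$ this holds by construction. For $\D_z^{(k)}$ it holds because it acts on the coefficients (in $\F_q(z)$, resp.\ $\F_{q^{\deg(P)}}(z)$) of the $\theta^{-1}$-, resp.\ $P$-expansion, and hence never decreases $v$. The subspace $\F_q(z)\otimes_{\F_q}K_v$ is dense in $\widetilde{K_v}$ (truncate the expansion). For $c\in\F_q(z)$ and $x\in K_v$,
\[
\D_z^{(k)}\partial_E(cx)=\D_z^{(k)}(c)\,\partial_E(x)=\partial_E\bigl(\D_z^{(k)}(c)\,x\bigr)=\partial_E\bigl(\D_z^{(k)}(cx)\bigr),
\]
using only the $\F_q(z)$-linearity of $\partial_E$ and the $K_v$-linearity of $\D_z^{(k)}$. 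Additivity and continuity then give the lemma on all of $\widetilde{K_v}$, with no Laurent embedding needed.

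The paper proceeds a bit differently. It checks $K[z]$, then passes to $K(z)$ via multiplicativity of $\partial_E$ (tacitly combined with the Leibniz rule for $\D_z^{(k)}$ and an induction on $k$), and then invokes continuity. The bilinearity version is slightly cleaner, since it never has to handle quotients such as $1/(1-\theta z)$: the smaller subspace $\F_q(z)\otimes K$ is already dense.
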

\begin{proof}
    If $\widetilde{g}=\sum_{i=0}^lg_iz^i \in K[z]$, then
    $$\partial_E\left(\D_z^{(k)}\widetilde{g}\right) = \partial_E\left(\sum_{i=1}^l\binom{i}{k}g_iz^{i-k}\right) = \sum_{i=1}^l\binom{i}{k}\partial_E(g_i)z^{i-k} = \D_z^{(k)}\partial_E(\widetilde{g}).$$
    Using the fact that $\partial_E$ is also multiplicative, we see that 
    $$\partial_E\left(\D_z^{(k)}\frac{\widetilde{f}}{\widetilde{g}}\right)  = \D_z^{(k)}\partial_E\left(\frac{\widetilde{f}}{\widetilde{g}}\right)$$
    for $\widetilde{f},\widetilde{g} \in K[z]$. The fact that
    $$\D_z^{(k)}\partial_E(\widetilde{g})=\partial_E\left(\D_z^{(k)}\widetilde{g}\right)$$
    for any $\widetilde{g}\in \widetilde{K_v}$ is a consequence of the $v$-adic continuity of $\partial_E$ and $\D_z^{(k)}$. 
\end{proof}

\begin{prop}\label{prop : derivative P-adic zero iff torsion}
    Let $E/A$ be an $A$-finite Anderson module of rank $1$ such that $\sigma  N_E(L)\subseteq (t-\theta)N_E(L)$. Suppose $L_P(E/A)=0$. Take $\widetilde{w}\in U(\widetilde{E}/A[z])\cap\widetilde{W}$ such that $w:=\widetilde{w}|_{z=1}\in U(E/A)\cap W$ is nonzero. Then $\exp_E(w)$ is a torsion point for $E$ by Remark \ref{rem : 1}; take $a\in A\setminus\{0\}$ such that $E_a(\exp_{E}(w))=0$. Then 
    $$L_P^{(1)}(E/A)=0$$
    if and only if $$\left(\D_z(\widetilde{E_a}(\exp_{\widetilde{E}}(\widetilde{w})))\right)|_{z=1}$$
    is a torsion point for $E$. 
\end{prop}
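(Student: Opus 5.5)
Since $E$ has rank $1$, $\widetilde W$ has dimension $1$ and $U(\widetilde E/\widetilde A)\cap\widetilde W$ is free of rank $1$. By Theorem \ref{th : P-adic formula W} and Proposition \ref{prop : P-adic class formula integral level}, I would write
$$L_P(\widetilde E/\widetilde A)=\widetilde\alpha\cdot \mu(z),\qquad \Log_{\widetilde E,P}(\exp_{\widetilde E}(\widetilde u))=\partial_E(\mu(z))\,\widetilde b ,$$
where $\widetilde u$ and $\widetilde b$ are the bases from Lemma \ref{lem : ev(U cap W) is lattice}, and $\widetilde\alpha$ has neither a zero nor a pole at $z=1$. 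Thus $L_P^{(1)}(E/A)=0$ if and only if $\mu$ vanishes to order at least $2$ at $z=1$, given that $\mu(1)=0$. Write $\widetilde w=\partial_E(c(z))\widetilde u$ with $c\in\widetilde A$. Since $\widetilde u\in U(\widetilde E/A[z])$ does not vanish at $z=1$ while $w\neq 0$, we have $c(1)\neq0$. Replacing $\widetilde u$ by $\widetilde w$ and $\widetilde a:=a$ therefore changes $\mu$ only by the factor $a\,c(z)$, which is a unit at $z=1$. It is thus enough to study
$$\Phi(z):=\Log_{\widetilde E,P}\bigl(\widetilde E_a(\exp_{\widetilde E}(\widetilde w))\bigr)=\partial_E(a)\,\Log_{\widetilde E,P}(\exp_{\widetilde E}(\widetilde w)),$$
and to show that $\D_z\Phi|_{z=1}=0$ if and only if $Y:=\D_z(\widetilde E_a(\exp_{\widetilde E}(\widetilde w)))|_{z=1}$ is torsion. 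By Lemma \ref{lem : d(g)'=0 iff g'=0} and the product rule, $\D_z\Phi|_{z=1}$ differs from $\partial_E(\D_z\mu)|_{z=1}$ applied to $b$ only by the term $\partial_E(\mu(1))\D_z\widetilde b=0$, up to units. So it remains to compute $\D_z\Phi|_{z=1}$.

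Set $X(z):=\widetilde E_a(\exp_{\widetilde E}(\widetilde w))\in A[z]^n$. We have $X(1)=0$, so $X=(z-1)X_1(z)$ with $X_1(1)=Y$. The key step is a Taylor expansion of $\Log_{\widetilde E,P}$ at $z=1$. Write $\Log_{\widetilde E,P}(X)=\frac{1}{h(z)}\log_{\widetilde E,P}(\widetilde E_{h(z)}X)$. Both $\widetilde E_{h(z)}$ and $\log_{\widetilde E,P}=\sum P_kz^k\tau^k$ are $\F_q[z]$-linear, since $\tau$ fixes $z$ and $z$ commutes with everything. Hence
$$\Log_{\widetilde E,P}\bigl((z-1)X_1\bigr)=(z-1)\Log_{\widetilde E,P}(X_1),$$
where everything lies in $\frac1{h(z)}\LieEt(\T(F_P))$ and $h(1)\ne0$. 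Differentiating and evaluating at $z=1$ gives
$$\D_z\Phi|_{z=1}=\Log_{\widetilde E,P}(X_1)|_{z=1}=\Log_{E,P}(Y).$$
The last equality uses the continuity of evaluation at $z=1$ on the Tate algebra, together with the fact that $h(z)|_{z=1}=h(1)$, so the twisted $\Log$ specializes to $\Log_{E,P}$.

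Since $Y\in A^n$, Proposition \ref{prop : kernel Log torsion points} shows that $\Log_{E,P}(Y)=0$ if and only if $Y$ is torsion. This gives the equivalence.

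The main obstacle is justifying that $\Log_{\widetilde E,P}$ commutes with multiplication by $(z-1)$ and with evaluation at $z=1$. This requires checking that $X_1\in A[z]^n$, which holds because $X\in A[z]^n$ and $(z-1)$ is monic, so the division stays integral. It also requires the $P$-adic convergence of $\log_{\widetilde E,P}$ on $\widetilde E_{h(z)}(X_1)\in P\,A[z]^n$, so that $\Log$ is well defined on $\F_q[z]$-multiples, and the compatibility of hyperderivatives with $P$-adically convergent series in $\T(F_P)$, as in Lemma \ref{lem : d(g)'=0 iff g'=0}.
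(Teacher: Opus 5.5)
Your proposal is correct and follows essentially the same route as the paper. You write $\Log_{\widetilde{E},P}(\widetilde{E}_a(\exp_{\widetilde{E}}(\widetilde{w})))=\partial_E(\widetilde{g})\widetilde{b}$ with $L_P(\widetilde{E}/\widetilde{A})=\widetilde{g}\,\widetilde{\alpha}$, reduce $L_P^{(1)}(E/A)=0$ to the vanishing of $\D_z$ of this point at $z=1$, identify that value with $\Log_{E,P}(Y)$, and conclude by Proposition~\ref{prop : kernel Log torsion points}.

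The differences are minor. You factor $X=(z-1)X_1$ and use the $\F_q[z]$-linearity of $\Log_{\widetilde{E},P}$, instead of the paper's Leibniz rule $\D_z(\Log_{\widetilde{E},P}(X))=(\D_z\Log_{\widetilde{E},P})(X)+\Log_{\widetilde{E},P}(\D_z X)$. You also pass explicitly from the basis $\widetilde{u}$ to $\widetilde{w}=\partial_E(c)\widetilde{u}$, which the paper leaves implicit when it applies Proposition~\ref{prop : P-adic class formula integral level} directly to $\partial_E(a)\widetilde{w}$. In that step you should say that $c$ has no pole at $z=1$, which follows from $u\neq 0$.
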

\begin{proof}
    Take $\widetilde{b}\in\LieEt(A[z])$ such that $\LieEt(\widetilde{A})\cap \widetilde{W}=\partial_E(\widetilde{A})\widetilde{b}$ and $b:=\widetilde{b}|_{z=1}\neq 0$. Recall that by the functional equation of the exponential,
    $$\widetilde{E_a}(\exp_{\widetilde{E}}(\widetilde{w})) = \exp_{\widetilde{E}}(\partial_E(a)\widetilde{w}),$$
    and $\partial_E(a)\widetilde{w} \in U(\widetilde{E}/A[z])\cap\widetilde{W}$. By Proposition \ref{prop : P-adic class formula integral level}, if we write
    \begin{equation}\label{eq : aw = gb}
        \Log_{\widetilde{E},P}(\widetilde{E_a}(\exp_{\widetilde{E}}(\widetilde{w})))=\partial_E(\widetilde{g}) \widetilde{b},
    \end{equation}
    for some $\widetilde{g}\in \widetilde{K_P}$ that converges $P$-adically at $z=1$, then
    \begin{equation} \label{eq : LP=ga}
    L_P(\widetilde{E}/\widetilde{A})=\widetilde{g}\cdot\widetilde{\alpha}
    \end{equation}
    for some $\widetilde{\alpha}\in \widetilde{K}$ that does not have a pole or a zero at $z=1$.

    \begin{claim}
        We claim that 
    $$L_P^{(1)}(E/A) = 0 \iff \left(\D_z\left(\Log_{\widetilde{E},P}(\widetilde{E_a}(\exp_{\widetilde{E}}(\widetilde{w})))\right)\right)|_{z=1}=0.$$
    \end{claim}
    \begin{proof}
        Write $\alpha=\widetilde{\alpha}|_{z=1}$. Since $L_P(E/A)=0$, by Equation \eqref{eq : LP=ga} we must have $\widetilde{g}|_{z=1}=0$. Taking derivatives  in \eqref{eq : LP=ga} yields 
    $$\D_z  L_P(\widetilde{E}/\widetilde{A}) = \D_z\widetilde{g}\cdot \widetilde{\alpha} + \widetilde{g}\cdot  \D_z\widetilde{\alpha};$$
    hence
    $$L_P^{(1)}(E/A)= \left(\D_z\widetilde{g}\right)|_{z=1}\cdot \alpha.$$
    Therefore, we have
    $$L_P^{(1)}(E/A) = 0 \iff \left(\D_z\widetilde{g}\right)|_{z=1}=0.$$
    By Equation \eqref{eq : aw = gb}, we obtain
    $$\left(\D_z\left(\Log_{\widetilde{E},P}(\widetilde{E_a}(\exp_{\widetilde{E}}(\widetilde{w})))\right)\right)|_{z=1} =\left(\D_z\partial_E(\widetilde{g})\right)|_{z=1}\cdot b ,$$
    and by Lemma \ref{lem : d(g)'=0 iff g'=0}, 
    $$\left(\D_z\left(\Log_{\widetilde{E},P}(\widetilde{E_a}(\exp_{\widetilde{E}}(\widetilde{w})))\right)\right)|_{z=1}=0 \iff \left(\D_z\widetilde{g}\right)|_{z=1}=0 ,$$ 
    as desired.
    \end{proof}
    Next, note that
    \begin{align*}
        \D_z\left(\Log_{\widetilde{E},P}(\widetilde{E_a}(\exp_{\widetilde{E}}(\widetilde{w})))\right) &= \\ (\D_z\Log_{\widetilde{E},P})(\widetilde{E_a}(\exp_{\widetilde{E}}(\widetilde{w})&)) + \Log_{\widetilde{E},P}(\D_z\widetilde{E_a}(\exp_{\widetilde{E}}(\widetilde{w}))).
    \end{align*}
    Thus
    $$\left(\D_z\left(\Log_{\widetilde{E},P}(\widetilde{E_a}(\exp_{\widetilde{E}}(\widetilde{w})))\right)\right)|_{z=1} = \Log_{E,P}\left(\left(\D_z\widetilde{E_a}(\exp_{\widetilde{E}}(\widetilde{w}))\right)|_{z=1}\right).$$
    By Proposition \ref{prop : kernel Log torsion points}, this quantity vanishes if and only if 
    $$\left(\D_z\widetilde{E_a}(\exp_{\widetilde{E}}(\widetilde{w}))\right)|_{z=1}$$
    is a torsion point. This concludes the proof.
\end{proof}

\begin{theorem}\label{th : vanishing does not depend P}
     Let $E/A$ be an $A$-finite Anderson module of rank $1$ such that $\sigma  N_E(L)\subseteq (t-\theta)N_E(L)$. The order of vanishing of $L_P(\widetilde{E}/\widetilde{A})$ at $z=1$ does not depend on $P$.
\end{theorem}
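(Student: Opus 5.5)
The plan is to generalize the argument of Proposition \ref{prop : derivative P-adic zero iff torsion} from the first hyperderivative to all orders. The target is a criterion for the order of vanishing that makes no reference to $P$: a statement purely about torsion points of $E(A)$ built from $\exp_{\widetilde{E}}$. Fix, as in that proposition, $\widetilde{b}\in\LieEt(A[z])$ generating $\LieEt(\widetilde{A})\cap\widetilde{W}$ with $b\neq 0$, and $\widetilde{w}\in U(\widetilde{E}/A[z])\cap\widetilde{W}$ with $w\neq 0$. These choices depend only on $E$, via Theorem \ref{th : class formula mod K tilde} and Lemma \ref{lem : ev(U cap W) is lattice}, and not on $P$.

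If $L_P(E/A)\neq 0$, then by Corollary \ref{cor : vanishing P-adic L} this is equivalent to injectivity of $\exp_E$, so order $0$ is independent of $P$. Otherwise $\exp_E$ is not injective, hence $\exp_E(w)$ is torsion for every $P$ (Remark \ref{rem : 1}), and we fix $a\neq 0$ with $E_a(\exp_E(w))=0$. Set
$$\widetilde{y}:=\widetilde{E_a}(\exp_{\widetilde{E}}(\widetilde{w}))=\exp_{\widetilde{E}}(\partial_E(a)\widetilde{w})\in \widetilde{E}(A[z]).$$
By Proposition \ref{prop : P-adic class formula integral level}, $\Log_{\widetilde{E},P}(\widetilde{y})=\partial_E(\widetilde{g})\widetilde{b}$ and $L_P(\widetilde{E}/\widetilde{A})=\widetilde{g}\,\widetilde{\alpha}$, where $\widetilde{\alpha}$ has neither zero nor pole at $z=1$. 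Hence $\ord_{z=1}L_P=\ord_{z=1}\widetilde{g}$. By Lemma \ref{lem : d(g)'=0 iff g'=0} and $b\neq0$, this equals the smallest $k$ with $\bigl(\D_z^{(k)}\Log_{\widetilde{E},P}(\widetilde{y})\bigr)|_{z=1}\neq 0$.

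The key step is to expand $\D_z^{(k)}\Log_{\widetilde{E},P}(\widetilde{y})$ with the Leibniz rule. Writing $\Log_{\widetilde{E},P}=\sum_j \widetilde{\Lambda}_j$ with $z$-dependent coefficients, we get
$$\D_z^{(k)}\Log_{\widetilde{E},P}(\widetilde{y})=\sum_{i+j=k}(\D_z^{(i)}\Log_{\widetilde{E},P})(\D_z^{(j)}\widetilde{y}).$$
This needs care, since the Frobenius twist commutes with $\D_z^{(j)}$ only up to the $\F_q$-linear action on coefficients; one should verify that $\D_z^{(j)}(\tau^m x)=\tau^m(\D_z^{(j)}x)$ in characteristic $p$ when $z$ is fixed by $\tau$. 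I would aim to prove by induction on $k$ the following claim: if $\bigl(\D_z^{(j)}\Log_{\widetilde{E},P}(\widetilde{y})\bigr)|_{z=1}=0$ for all $j<k$, then $(\D_z^{(j)}\widetilde{y})|_{z=1}$ is torsion for $j<k$, and after replacing $a$ by a suitable multiple $a'$ (also independent of $P$), the $\D_z^{(j)}\widetilde{y}|_{z=1}$ vanish for $j<k$. The lower terms then disappear at $z=1$, leaving
$$\bigl(\D_z^{(k)}\Log_{\widetilde{E},P}(\widetilde{y})\bigr)|_{z=1}=\Log_{E,P}\bigl((\D_z^{(k)}\widetilde{y})|_{z=1}\bigr).$$
Proposition \ref{prop : kernel Log torsion points} then shows that this is zero if and only if $(\D_z^{(k)}\widetilde{y})|_{z=1}\in E(A)_{tors}$, a condition independent of $P$.

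The main obstacle is this induction. Multiplying $\widetilde{y}$ by $\widetilde{E}_c$ to kill the torsion of lower derivatives changes the higher derivatives through the cross terms $\D_z^{(i)}(\widetilde{E}_c)$. I would handle this by replacing $\widetilde{y}$ with $\widetilde{E}_{c}(\widetilde{y})$ for a constant $c\in A$ killing $E(A)_{tors}$ (a finite group), and using $\partial_E(c)\Log=\Log\circ\widetilde{E}_c$, so that $\widetilde{g}$ is simply multiplied by $c$. Then I would check that $\D_z^{(i)}\widetilde{E}_c$ evaluated at $z=1$ and applied to vectors that are already zero gives zero. Since all these modifications are defined without reference to $P$, the resulting criterion (the first $k$ for which $(\D_z^{(k)}\widetilde{y})|_{z=1}$ is non-torsion) computes $\ord_{z=1}L_P(\widetilde{E}/\widetilde{A})$ uniformly in $P$.
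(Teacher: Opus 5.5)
Your proposal is correct and follows essentially the same route as the paper. You write $\Log_{\widetilde{E},P}(\widetilde{E}_a(\exp_{\widetilde{E}}(\widetilde{w})))=\partial_E(\widetilde{g})\widetilde{b}$, expand the hyperderivatives by the Leibniz rule, and repeatedly replace $a$ by a multiple that kills the torsion value at $z=1$ of the previous derivative, so that only $\Log_{E,P}$ of the top derivative survives; Proposition \ref{prop : kernel Log torsion points} then gives a $P$-independent criterion. The differences are minor: you carry out the induction for all $k$, whereas the paper writes out only $k=2$, and you use a single constant killing the finite group $E(A)_{tors}$ instead of the paper's stage-dependent multiplier.
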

\begin{proof}
    By Corollary \ref{cor : vanishing P-adic L} the vanishing of $L_P(E/A)$ does not depend on $P$, and by Proposition \ref{prop : derivative P-adic zero iff torsion} the vanishing of $L_P^{(1)}(E/A)$ does not depend on $P$ either. We can repeat the method used in the cited proposition to obtain the result for higher hyperderivatives. For the sake of readability, we present the argument only for the second hyperderivative; the general case follows by the same reasoning.
    
    We keep the notations of the proof of Proposition \ref{prop : derivative P-adic zero iff torsion}. Suppose that 
    $$L_P^{(1)}(E/A)=0,$$ 
    which means that there exists a nonzero $b\in A$ such that 
    $$E_b\left(\left(\D_z\widetilde{E_a}(\exp_{\widetilde{E}}(\widetilde{w}))\right)|_{z=1}\right)=0.$$
    By a similar computation as in the previous proposition,
    $$L_P^{(2)}(E/A)=0 \iff \left(\D_z^{(2)}\left(\Log_{\widetilde{E},P}(\widetilde{E_{ba}}(\exp_{\widetilde{E}}(\widetilde{w})))\right)\right)|_{z=1}=0.$$
    Now
    \begin{align*}
        \D_z^{(2)}&\left(\Log_{\widetilde{E},P}(\widetilde{E}_{ba}(\exp_{\widetilde{E}}(\widetilde{w})))\right)=\left(\D_z^{(2)}\Log_{\widetilde{E},P}\right)(\widetilde{E}_{ba}(\exp_{\widetilde{E}}(\widetilde{w}))) \\
       & + 2\left(\D_z\Log_{\widetilde{E},P}\right)\left(\widetilde{E_b}\left(\D_z\widetilde{E}_{a}(\exp_{\widetilde{E}}(\widetilde{w}))\right) + \left(\D_z\widetilde{E}_{b}\right)(\widetilde{E}_{a}(\exp_{\widetilde{E}}(\widetilde{w})))\right) \\
        &+ \Log_{\widetilde{E},P}\left(\D_z^{(2)}(\widetilde{E}_{ba}(\exp_{\widetilde{E}}(\widetilde{w})))\right).
    \end{align*}
    When we evaluate at $z=1$, everything vanishes except the last term:
    $$ \left(\D_z^{(2)}\left(\Log_{\widetilde{E},P}(\widetilde{E_{ba}}(\exp_{\widetilde{E}}(\widetilde{w})))\right)\right)|_{z=1}= \Log_{E,P}\left(\left(\D_z^{(2)}(\widetilde{E}_{ba}(\exp_{\widetilde{E}}(\widetilde{w})))\right)|_{z=1}\right).$$
    Thus, $L_P^{(2)}(E/A) = 0$ if and only if
    $$\left(\D_z^{(2)}(\widetilde{E}_{ba}(\exp_{\widetilde{E}}(\widetilde{w})))\right)|_{z=1}$$
    is a torsion point for $E$. In particular, the vanishing of $L_P^{(2)}(E/A)$ does not depend on $P$. 
\end{proof}

%%%%%%%%%%%%%%%%%%%%%%%%%%%%%%%%%%%%%%%%%%%%%%%%%%%%%%%%%%%%%%%%%%%%%%%%%%%%%%%%%%%%%%%%%%%%%%%%%%%%%%%%%%%%%%%%%%
%%%%%%%%%%%%%%%%%%%%%%%%%%%%%%%%%%%%%%%%%%%%%%%%%%%%%%%%%%%%%%%%%%%%%%%%%%%%%%%%%%%%%%%%%%%%%%%%%%%%%%%%%%%%%%%%%%
%%%%%%%%%%%%%%%%%%%%%%%%%%%%%%%%%%%%%%%%%%%%%%%%%%%%%%%%%%%%%%%%%%%%%%%%%%%%%%%%%%%%%%%%%%%%%%%%%%%%%%%%%%%%%%%%%%

\section{$P$-adic Carlitz zeta values}
\subsection{Order of vanishing}
Define the \textit{Carlitz zeta value} at $n$ as
$$\zeta(n):= \sum_{a\in A_{+}} \frac{1}{a^n}\in K_\infty,$$
and the $z$-twisted Carlitz zeta value as
$$\zeta(n,z):= \sum_{d\geq 0}\sum_{a\in A_{+,d}} \frac{1}{a^n}z^d \in \T_\infty(K).$$

Let $P\in A$ be a monic irreducible element. Define the $P$-adic Carlitz zeta value as
$$\zeta_P(n):= \sum_{d\geq 0}\sum_{\substack{a\in A_{+,d}\\ P\nmid a}} \frac{1}{a^n} \in K_P,$$
where we are now looking at the $P$-adic topology. For convergence reasons, it is important in this case to first sum by degrees (see \cite[Section 5.5(b)]{thakur_function_2004}). Let also
$$\zeta_P(n,z):= \sum_{d\geq 0}\sum_{\substack{a\in A_{+,d}\\ P\nmid a}} \frac{1}{a^n}z^d\in \T_P(K).$$
Set
$$\zeta_P^{(1)}(n):=\left(\frac{d}{dz}\zeta_A(n,z)_P\right)|_{z=1}.$$

The $n$-th tensor power of the Carlitz module is the $\F_q$-algebra morphism
\begin{align*}
    \Cn  : A &\rightarrow \Mat_{n\times n}(K)\{\tau\} \\
    \theta & \mapsto \Cn_\theta = \partial_{\Cn}(\theta) + N\tau,
\end{align*}
where 
$$\partial_{\Cn}(\theta) = \begin{pmatrix}
    \theta & 1  & \cdots & 0 \\
    0 & \ddots & \ddots & \vdots \\
    \vdots & \ddots & \ddots & 1\\
    0 & \cdots & 0 & \theta
\end{pmatrix}\quad \text{ and } \quad N = \begin{pmatrix}
    0 & \cdots&\cdots& 0\\
    \vdots &&&\vdots \\
    0 &&& \vdots\\
    1 &  0 &\cdots& 0
\end{pmatrix}.$$
In the remaining of this section, we will denote $\partial=\partial_{\Cn}$.

The $L$-series attached to $\Cn$ is exactly the Carlitz zeta value at $n$. Indeed, by \cite[Section 4.2]{demeslay_class_2022} we have that 
$$[\Lie_{\Cn}(\widetilde{A}/Q\widetilde{A})]_{\widetilde{A}} = Q^n \quad \text{ and } \quad [\Cn(\widetilde{A}/Q\widetilde{A})]_{\widetilde{A}} = Q^n-z^{\deg_\theta(Q)}.$$
Therefore
\begin{align*}
    L_P(\widetilde{\Cn}/\widetilde{A})=\prod_{Q\neq P} \frac{[\Lie_{\Cn}(\widetilde{A}/Q\widetilde{A})]_{\widetilde{A}}}{[\Cn(\widetilde{A}/Q\widetilde{A})]_{\widetilde{A}}} = \prod_{Q\neq P} \frac{1}{1-z^{\deg_\theta(Q)}Q^{-n}} = \zeta_P(n,z),
\end{align*}
and hence $L_P(\Cn / A)=\zeta_P(n)$.

Let
$$e_n = \begin{pmatrix}
    0 \\
    \vdots \\
    0 \\
    1
\end{pmatrix}\in \Mat_{n\times 1}(K).$$
Recall that $L\subset \C_\infty$ is the perfection of $K$. One can check that $N_{\Cn}(L)=L[t]e_n^T$, and so $\Cn$ is an $A$-finite Anderson module of rank $1$. We also clearly have $\sigma N_{\Cn}(L)\subseteq (t-\theta)N_{\Cn}(L)$. Hence we can apply the results of Section \ref{sec : ord van L_P} to $\Cn$.

The \textit{Carlitz period} is defined as
$$\widetilde{\pi} = \theta\sqrt[q-1]{-\theta}\prod_{i=1}^\infty (1-\theta^{1-q^i})^{-1} \in \overline{K_\infty},$$
where $\sqrt[q-1]{-\theta}$ is any fixed $q-1$-th root of $-\theta$. Denote by 
$$\iota((x_1,\ldots,x_n)^T)=x_n$$
the projection onto the last component. We  have that
$$\ker(\exp_{\Cn}:\Lie_{\Cn}(\overline{K_\infty})\rightarrow \Cn(\overline{K_\infty})) = \partial(A)\pi_n,$$
for some $\pi_n\in \Lie_{\Cn}(\overline{K_\infty})$ such that $\iota(\pi_n)=\widetilde{\pi}^n$. Moreover, $\exp_{\Cn}$ is not injective on $\Lie_{\Cn}(K_\infty)$ if and only if $q-1\mid n$. As a consequence of Corollary \ref{cor : vanishing P-adic L}, we recover the following well-known fact:
$$\zeta_P(n)=0 \iff q-1 \mid n.$$
\begin{theorem}\label{th : order vanishing 1 riemann hyp}
   If $q-1\mid n$, then 
    $$\ord_{z=1}\zeta_P(n,z)=1.$$
\end{theorem}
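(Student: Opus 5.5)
The proof reduces to the single prime $P=\theta$ and then uses an explicit computation there. Since $q-1\mid n$, $\exp_{\Cn}$ is not injective on $\Lie_{\Cn}(K_\infty)$, so Corollary \ref{cor : vanishing P-adic L} gives $\zeta_P(n)=\zeta_P(n,1)=0$. Hence $\ord_{z=1}\zeta_P(n,z)\geq 1$ for every monic irreducible $P$. By Theorem \ref{th : vanishing does not depend P} applied to $E=\Cn$ (rank $1$, and $\sigma N_{\Cn}(L)\subseteq (t-\theta)N_{\Cn}(L)$ was checked above), this order does not depend on $P$. It therefore suffices to prove $\zeta_\theta^{(1)}(n)\neq 0$, i.e.\ that $z=1$ is a simple zero of $z\mapsto\zeta_\theta(n,z)$.

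At $P=\theta$ I will invoke the result of Diaz-Vargas and Polanco-Chi \cite{diaz-vargas_riemann_2016}: for the $\theta$-adic interpolation, $z\mapsto \zeta_\theta(n,z)$ has only simple zeros. I expect some care is needed in matching normalizations. Their statement concerns the $\theta$-adic zeta function $\sum_d z^d\sum_{a\in A_{+,d},\,\theta\nmid a} a^{-n}$, viewed as an entire function of $z$ over $\C_\theta$ with coefficients tending to $0$. Our $\zeta_\theta(n,z)\in\T_\theta(K)$ is exactly this power series. Since $\zeta_\theta(n,1)=0$, simplicity of the zeros gives $\D_z\zeta_\theta(n,z)|_{z=1}\neq 0$. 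Thus $\ord_{z=1}\zeta_\theta(n,z)=1$, and transporting this back via Theorem \ref{th : vanishing does not depend P} gives $\ord_{z=1}\zeta_P(n,z)=1$ for all $P$.

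The main obstacle is the identification step. One must check that the object in \cite{diaz-vargas_riemann_2016} really is our $\zeta_\theta(n,z)$, with $z$ in the same role and no reparametrization such as $z\mapsto z^{-1}$ or a Goss $s$-variable $x$ with $x^{-d}$. One must also check that their simplicity statement covers the point $z=1$ in the relevant "trivial zero" case $q-1\mid n$. If their result is phrased in terms of Goss's $\zeta_\theta(x,-y)$, I would translate by setting $x^{-1}=z$ and restricting to $y=-n$, using that the sum over degree $d$ is a polynomial in $z$. If that identification were problematic, the fallback is a direct computation at $P=\theta$ using Proposition \ref{prop : derivative P-adic zero iff torsion}. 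That means computing $\D_z$ of $\widetilde{\Cn}_a(\exp_{\widetilde{\Cn}}(\widetilde w))$ at $z=1$ for an explicit generator $\widetilde w$ (for instance one related to Pellarin's special point), and showing that the resulting point of $\Cn(A)$ is not torsion. Since $\Cn(A)_{tors}$ is small and explicit, this should reduce to a nonvanishing check of a single coordinate.
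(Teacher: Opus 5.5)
Your proposal is correct and matches the paper's proof. The paper likewise uses Theorem \ref{th : vanishing does not depend P} to reduce to $P=\theta$, and then invokes the result of Diaz-Vargas and Polanco-Chi that $z\mapsto\zeta_\theta(n,z)$ has only simple zeros. The paper also remarks that the statement is the case $s=0$ of Theorem \ref{th : order vanishing t1 ts}. That theorem's proof writes $\zeta_\theta(n,z)$, after transport to $K_\infty$, as $(1-z)$ times a series that is nonzero at $z=1$, which removes your normalization worry without relying on the external citation.
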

\begin{proof}
    By Theorem \ref{th : vanishing does not depend P}, it is enough to check that $\zeta_P(n,z)$ has a simple zero at $z=1$ for $P=\theta$. But this is a consequence of the $\theta$-adic version of the Riemann Hypothesis for function fields, which states, among other things, that $z\mapsto \zeta_\theta(n,z)$ only has simple zeros, and was proven by Diaz-Vargas and Polanco-Chi in \cite{diaz-vargas_riemann_2016}. Alternatively, we can see this result as a special case of Theorem \ref{th : order vanishing t1 ts}.
    \end{proof}

\subsection{Definitions and tools}
\paragraph{Logarithm convergence.} 
If $x=(x_1,\ldots,x_n)^T\in \Cn(K_\infty)$ such that $v_\infty(x_i) > n-i-\frac{qn}{q-1}$ for all $i=1,\ldots,n$, then $\log_{\Cn}(x)$ converges in $\Lie_{\Cn}(K_\infty)$ (\cite[Proposition 2.4.3]{anderson_tensor_1990}). We also have the same result with the twist by $z$.  If $x=(x_1,\ldots,x_n)^T\in \widetilde{\Cn}(\widetilde{K_\infty})$ verifies $v_\infty(x_i)> n-i-\frac{qn}{q-1}$ for all $i=1,\ldots,n$, then  $\log_{\widetilde{\Cn}}(x)$ converges in $\Lie_{\widetilde{\Cn}}(\widetilde{K_\infty})$.

\paragraph{Carlitz multilogarithm.} Define $D_0 = L_0=1$, and for $i\in \N^*$,
$$D_i:=(\theta^{q^i}-\theta)(\theta^{q^i}-\theta^q)\ldots (\theta^{q^i}-\theta^{q^{i-1}}),$$
$$L_i:=(-1)^i(\theta^{q^i}-\theta)(\theta^{q^{i-1}}-\theta)\ldots (\theta^{q}-\theta).$$
Define the Carlitz \textit{multilogarithm} as 
$$\mathcal{L}_n = \sum_{k\geq 0} \frac{1}{L_k^n}\tau^k.$$
Consider the operator $\Delta:K_\infty\{\{\tau\}\}\rightarrow K_\infty\{\{\tau\}\}$ given by
$$\Delta \left(\sum_{i\geq 0}a_i\tau^i\right) := \sum_{i\geq 1}(\theta^{q^i}-\theta)a_i\tau^i.$$
The last component of $\log_{\Cn}$ can be expressed in terms of the multilogarithm. If $x=(x_1,\ldots,x_n)^T\in \Cn(K_\infty)$ is such that $\log_{\Cn}(x)$ converges, then we have that (\cite[Remark 7.6.2(1)]{thakur_function_2004})
$$\log_{\Cn}\begin{pmatrix}
    x_1 \\ x_2 \\ \vdots \\x_n 
\end{pmatrix} = \begin{pmatrix}
    * \\ \vdots \\ * \\ \sum_{i=0}^{n-1}(-\Delta)^i\mathcal{L}_n(x_{n-i})
\end{pmatrix}.$$

\paragraph{Hyperderivatives with respect to $\theta$.}  Define the $j$-th hyperderivative with respect to $\theta$ as the $\F_q$-linear map $\D_\theta^{(j)}:K \rightarrow K$ given by
$$\D_\theta^{(j)}(\theta^m)=\binom{m}{j}\theta^{m-j},\quad m\geq 0.$$
These operators can be used to compute the matrix $\partial(a)$ for any $a\in A$. If $a\in A$, define
$$d[a]:=\begin{pmatrix}
    a & \D_\theta^{(1)}(a) & \cdots & \D_\theta^{(n-1)}(a) \\
    & a & \ddots & \vdots \\
    && a & \D_\theta^{(1)}(a) \\
    &&& a
\end{pmatrix}.$$
Note that $d[\theta] = \partial(\theta)$, and both $d$ and $\partial$ are $\F_q$-algebra morphisms. It follows that 
$$d[a] = \partial(a)\qquad \text{ for all }a\in A.$$

\paragraph{Bernoulli-Carlitz numbers.} If $q-1\mid n$, then $\zeta(n)/\widetilde{\pi}^n \in K$, and the $n$-th Bernoulli-Carlitz number is
$$BC_n:=\frac{\zeta(n)\Gamma_n}{\widetilde{\pi}^n} \in K,$$
where $\Gamma_n$ denotes the Carlitz factorial of $n$ (see \cite[Section 9.1]{goss_basic_1996}).

\paragraph{Anderson-Thakur's special point.} We follow  \cite{anderson_tensor_1990}. Let $t$ be a variable. Define $\gamma_j(t)\in A[t]$ by
$$\gamma_0(t)=1,\qquad \gamma_j(y)=\prod_{l=1}^j(\theta^{q^j}-t^{q^l}) \text{ for } j\geq 1.$$
Define the Anderson-Thakur polynomials $\alpha_n(t)$ by the generating series
$$\sum_{n\geq 1}\frac{\alpha_n(t)}{\Gamma_n}x^n=x\left(1-\sum_{j\geq 0}\frac{\gamma_j(t)}{D_j}x^{q^j}\right)^{-1}.$$
We have that $\alpha_n(t)\in A[t]$ due to \cite[Equation 3.5.1]{anderson_tensor_1990}. Moreover, one can check that $\alpha_n(t)\in A[t^q]$. Write
$$\alpha_n(t)=\sum_{j=0}^sh_{n,j}t^{qj},\quad h_{n,j}\in A.$$
Then 
$$\mathfrak{z}_n := \sum_{j=0}^s\partial(h_{n,j})\log_{\Cn}(\theta^{qj}e_n)$$
is Anderson and Thakur's special point. It verifies 
$$\iota(\mathfrak{z}_n) = \Gamma_n\zeta(n), \quad \text{ and } \quad \exp_{\Cn}(\mathfrak{z}_n) \in \Cn(A),$$
where we recall that $\iota$ is the projection onto the last component.
One can readily check that the point
$$\widetilde{\mathfrak{z}_n} := \sum_{j=0}^s\partial(h_{n,j})\log_{\widetilde{\Cn}}(\theta^{qj}e_n)$$
verifies
$$\iota(\widetilde{\mathfrak{z}_n}) = \Gamma_n\zeta(n,z), \quad \text{ and } \quad \exp_{\widetilde{\Cn}}(\widetilde{\mathfrak{z}_n}) \in \widetilde{\Cn}(\widetilde{A}).$$

\begin{theorem} [Yu]\label{th : Yu}
\begin{enumerate}
    \item Let $x,y\in \Lie_{\Cn}(\overline{K_\infty})$ be two nonzero vectors. Suppose $\exp_{\Cn}(x),\exp_{\Cn}(y)\in \Cn(\overline{K})$ and $\iota(x) = \iota(y)$. Then $x=y$.
    \item Let $x,y\in U(\widetilde{\Cn}/A[z])$ be two nonzero vectors such that $\iota(x) = \iota(y)$. Then $x=y$.
\end{enumerate}
\end{theorem}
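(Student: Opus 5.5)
The plan is to reduce both parts to Yu's sub-$t$-module theorem, using that $\Cn$ is simple. Part 2 then follows from Part 1 by specializing $z$ at roots of unity.

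\emph{Part 1.} Set $u:=x-y\in\Lie_{\Cn}(\overline{K_\infty})$. By additivity of $\exp_{\Cn}$ we have $\exp_{\Cn}(u)=\exp_{\Cn}(x)-\exp_{\Cn}(y)\in\Cn(\overline{K})$, and $\iota(u)=0$. So $u$ is a "logarithm of an algebraic point" lying in the hyperplane $\ker\iota$, which is defined over $K$.

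Yu's sub-$t$-module theorem (Yu, \emph{Transcendence and special zeta values}, Annals 1991) gives the following. For a $t$-module $E$ defined over $\overline{K}$ and $u\in\Lie_E(\C_\infty)$ with $\exp_E(u)\in E(\overline{K})$, the smallest $\overline{K}$-rational subspace of $\Lie_E$ containing $u$ is $\Lie_H$ for some sub-$t$-module $H\subseteq E$. Applying this to $E=\Cn$, we obtain a sub-$t$-module $H$ with $u\in\Lie_H\subseteq\ker\iota$.

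Next I would show that $\Cn$ has no connected sub-$t$-modules other than $0$ and $\Cn$. The dual $t$-motive $N_{\Cn}(L)=L[t]e_n^T$ has rank $1$, so a nonzero sub-$t$-module would correspond to a quotient motive of rank $0$, and this forces equality. Since $\iota$ is nonzero on $\Lie_{\Cn}$, we cannot have $H=\Cn$. Hence $H=0$, so $u=0$ and $x=y$.

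Nonzeroness of $x$ and $y$ is not really needed in this argument; it only matters in Yu's formulation, where periods such as $\pi_n$ with $\iota(\pi_n)=\widetilde{\pi}^n\neq0$ must be allowed.

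\emph{Part 2.} Again put $u:=x-y\in U(\widetilde{\Cn}/A[z])\subseteq\T(K_\infty)^n$, so that $\iota(u)=0$ and $\exp_{\widetilde{\Cn}}(u)\in\widetilde{\Cn}(A[z])$.

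Fix $\zeta\in\overline{\F_q}^\times$ and choose $c\in\overline{\F_q}^\times$ with $c^{q-1}=\zeta$. Conjugating by the scalar $c$ gives $c^{-1}\,\widetilde{\Cn}_\theta|_{z=\zeta}\,c=\Cn_\theta$, because $N\tau\,c=c^qN\tau$ and $\partial$ commutes with scalars. Consequently $\exp_{\Cn}(c^{-1}u(\zeta))=c^{-1}\exp_{\widetilde{\Cn}}(u)|_{z=\zeta}\in\Cn(\overline{K})$.

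Evaluating at $z=\zeta$ is legitimate, since elements of $\F_q[z]((1/\theta))$ specialize to $\overline{\F_q}((1/\theta))$ and the exponential is compatible with this specialization. Moreover $\iota(c^{-1}u(\zeta))=0$. The argument of Part 1 therefore gives $u(\zeta)=0$ for every $\zeta\in\overline{\F_q}^\times$.

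Writing $u=\sum_k a_k(z)\theta^{-k}$ with $a_k\in\F_q[z]^n$, each $a_k$ vanishes at infinitely many points, so $a_k=0$ for all $k$. Hence $u=0$ and $x=y$.

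\emph{Main obstacle.} The delicate step is the precise invocation of Yu's sub-$t$-module theorem together with the simplicity of $\Cn$. One has to identify sub-$t$-modules with quotients of the dual motive, and check that no nontrivial sub-$t$-module has its Lie algebra inside $\ker\iota$. The specialization step in Part 2 is routine by comparison; the only point to check there is that $\exp_{\widetilde{\Cn}}$ commutes with evaluation at $z=\zeta$, which follows from coefficientwise convergence.
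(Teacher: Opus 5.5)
Your Part 1 is correct, though heavier than needed. Yu's 1991 result, which the paper cites, already says that a nonzero logarithm of an algebraic point of $\Cn$ has transcendental last coordinate; applied to $x-y$, this gives Part 1 at once. The sub-$t$-module theorem you invoke comes from Yu's later paper (Ann.\ of Math.\ 1997), not the 1991 one, and it is stated for regular $t$-modules, which $\Cn$ is. Your simplicity argument for $\Cn$ is sound: if $H\neq 0$, the dual motive of $\Cn/H$ is $L[t]$-torsion, hence finite-dimensional over $L$, yet free of rank $n-\dim H$ over $L\{\sigma\}$.

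The genuine gap is in Part 2. For $\zeta\notin\F_q$, specialization at $z=\zeta$ does not intertwine the two Frobenius maps, so your untwisting fails. In $\exp_{\widetilde{\Cn}}=\sum_k Q_kz^k\tau^k$, the map $\tau$ is $\F_q[z]$-linear on $\T(K_\infty)=\F_q[z]((1/\theta))$, namely $\tau(\sum_i a_i(z)\theta^{-i})=\sum_i a_i(z)\theta^{-qi}$. Hence $(\tau u)|_{z=\zeta}=\sum_i a_i(\zeta)\theta^{-qi}$, whereas $(u|_{z=\zeta})^q=\sum_i a_i(\zeta^q)\theta^{-qi}$; already for $u=z$ these are $\zeta$ and $\zeta^q$. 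So $\exp_{\widetilde{\Cn}}(u)|_{z=\zeta}$ is the exponential of $\partial+\zeta N(\tau\otimes 1)$ on $(K_\infty\otimes_{\F_q}\F_q(\zeta))^n$, where $\tau\otimes 1$ fixes $\F_q(\zeta)$. This is not $\Cn$ conjugated by a scalar. Concretely, your identity $\exp_{\Cn}(c^{-1}u|_{z=\zeta})=c^{-1}\exp_{\widetilde{\Cn}}(u)|_{z=\zeta}$ is false: for $n=1$, testing it on $u=1$ and $u=z$ and comparing coefficients of $1/D_k$ forces $\zeta^q=\zeta$. Your exponent is also off: $c^{-1}\zeta N\tau\, c=\zeta c^{q-1}N\tau$, so you would need $c^{q-1}=\zeta^{-1}$. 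Your computation is valid only for $\zeta\in\F_q^\times$, and that gives just $q-1$ zeros of each $a_k$, which does not force $a_k=0$.

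The simplest repair uses only $z=1$.
\begin{itemize}
\item If $u\neq0$, write $u=(z-1)^ku'$ with $u'\in\T(K_\infty)^n$ and $u'|_{z=1}\neq 0$. This is possible because $(z-1)$ divides $\sum_i a_i\theta^{-i}$ if and only if it divides every $a_i$.
\item Since $\exp_{\widetilde{\Cn}}$ is $\F_q[z]$-linear, $(z-1)^k\exp_{\widetilde{\Cn}}(u')\in A[z]^n$. Comparing $\theta$-coefficients then gives $\exp_{\widetilde{\Cn}}(u')\in A[z]^n$.
\item Also $\iota(u')=0$, because $\T(K_\infty)$ is a domain.
\item At $z=1$, specialization does commute with the $q$-power map, since $a_i(1)\in\F_q$. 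So $\exp_{\Cn}(u'|_{z=1})=\exp_{\widetilde{\Cn}}(u')|_{z=1}\in A^n$, with $\iota(u'|_{z=1})=0$ and $u'|_{z=1}\neq0$. This contradicts Part 1 in the form you proved it.
\end{itemize}

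If you want to keep roots of unity, set $m=[\F_q(\zeta):\F_q]$ and let $Z\in\mathrm{GL}_m(\F_q)$ be multiplication by $\zeta$. Then $\partial+\zeta N(\tau\otimes 1)$ becomes the $nm$-dimensional $t$-module $\partial\otimes I_m+(N\otimes Z)\tau$ over $K$. Lang's theorem gives $T\in\mathrm{GL}_m(\overline{\F_q})$ with $T^{-1}ZT^{(1)}=I$. Conjugation by $I\otimes T$ identifies this module with $(\Cn)^m$ and preserves the block of last coordinates, so Part 1 applies factor by factor.
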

\begin{proof}
    For the first part, see \cite[Corollary 2.4]{yu_transcendence_1991}. For the second one, we refer to \cite[Lemma 5.4]{angles_special_2020}. 
\end{proof}

By Theorem \ref{th : class formula mod K tilde}, there exists some nonzero $a\in A[z]$ such that
    $$\partial(\zeta(n,z)a)\widetilde{b}\in U(\widetilde{\Cn}/A[z]),$$
    where $\widetilde{b}\in \LieEt(A[z])$ is taken to be an $\widetilde{A}$-generator of $\LieEt(\widetilde{A}) \cap \widetilde{W}$.
    By Yu's Theorem \ref{th : Yu},
    \begin{equation} \label{eq : zn <-> b}
        \widetilde{\mathfrak{z}_n} = \partial\left(\frac{\Gamma_n\zeta(n,z)}{\iota(\widetilde{b})}\right)\widetilde{b}.
    \end{equation}
It follows that $\widetilde{\mathfrak{z}_n} \in \widetilde{W}$, and $\mathfrak{z}_n \in W$.

\subsection{A formula for $\zeta_P^{(1)}(n)$}
Let 
$$n=l(q-1)$$
for some $l\in \N^+$. As discussed in the previous section, this means that $\zeta_P(n)=0$. Using a special point recently found by Pellarin \cite{pellarin_carlitz_2025} in combination with the work of the previous section, we will derive a formula for $\zeta_P^{(1)}(n)$. Set
    $$S_l(Y):= \sum_{i=0}^{l-1}(\theta-Y)^{l-1-i}(\theta^q-Y)^i = \frac{(\theta-Y)^l - (\theta^q-Y)^l}{\theta - \theta^q},$$
    and write
    $$S_l(Y) = \sum_{i=0}^{l-1}c_{l,i}Y^i.$$
Note that $\deg_\theta(c_{l,i})\leq q(l-i-1)$, and $\deg_\theta(c_{l,0})= q(l-1)$.
Set
$$J_l := \sum_{i=0}^{l-1} \Cn_{c_{l,i}}\begin{pmatrix}
    0\\
    \vdots\\
    0\\
    \theta^{qi}
\end{pmatrix} \quad \text{ and } \quad \widetilde{J_l} := \sum_{i=0}^{l-1} \widetilde{\Cn_{c_{l,i}}}\begin{pmatrix}
    0\\
    \vdots\\
    0\\
    \theta^{qi}
\end{pmatrix}.$$
Define also
$$j_l := \sum_{i=0}^{l-1} \partial(c_{l,i})\log_{\Cn}\begin{pmatrix}
    0\\
    \vdots\\
    0\\
    \theta^{qi}
\end{pmatrix}\quad \text{ and } \quad \widetilde{j_l} :=  \sum_{i=0}^{l-1} \partial(c_{l,i})\log_{\widetilde{\Cn}}\begin{pmatrix}
    0\\
    \vdots\\
    0\\
    \theta^{qi}
\end{pmatrix}.$$

\begin{theorem}[{\cite[Theorem C and Corollary D]{pellarin_carlitz_2025}}]\label{th : Pellarin point}
    Let $n=l(q-1)$, $l\geq 0$. 
    Then
    \begin{enumerate}
        \item $J_l$ is a nonzero $(\theta-\theta^q)$-torsion point for $\Cn$.
        \item For all $0\leq i\leq l-1$, we have
        $$BC_n = \frac{h_{n,i}}{c_{l,i}(\theta - \theta^q)},$$
        where we recall that the $h_{n,i}$ are the coefficients of the Anderson-Thakur polynomial.
    \end{enumerate}
\end{theorem}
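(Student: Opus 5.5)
The plan is to work entirely on the logarithmic side. Set $u:=\partial(\theta-\theta^q)\,j_l\in\Lie_{\Cn}(K_\infty)$. I will show $\exp_{\Cn}(u)=0$ by computing $\iota(u)$ explicitly and then invoking Yu's Theorem~\ref{th : Yu}. Three preliminary remarks:
\begin{itemize}
\item Each $\theta^{qi}e_n$ with $0\le i\le l-1$ satisfies the convergence bound of Anderson--Thakur, so $\log_{\Cn}(\theta^{qi}e_n)$ converges.
\item Since $\exp_{\Cn}\log_{\Cn}=\I_n$ and $\exp_{\Cn}\partial(c)=\Cn_c\exp_{\Cn}$, we get $\exp_{\Cn}(j_l)=J_l$, and hence $\exp_{\Cn}(u)=\Cn_{\theta-\theta^q}(J_l)$.
\item The torsion claim in part~1 is therefore exactly $\exp_{\Cn}(u)=0$.
\end{itemize}

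\textbf{Step 1: compute $\iota(u)$.} For vectors supported on the last coordinate, the multilogarithm formula gives $\iota(\log_{\Cn}(xe_n))=\mathcal{L}_n(x)$. Combined with $d[a]=\partial(a)$, the last coordinate of $\partial(c)y$ is $c\,\iota(y)$, so
$$\iota(j_l)=\sum_{k\ge0}\frac{S_l(\theta^{q^{k+1}})}{L_k^n}.$$
Rewrite the numerator using
$$S_l(\theta^{q^{k+1}})=\frac{(\theta-\theta^{q^{k+1}})^l-(\theta^q-\theta^{q^{k+1}})^l}{\theta-\theta^q},\qquad \theta^{q^{k+1}}-\theta^q=(\theta^{q^k}-\theta)^q,$$
together with $L_{k+1}=-(\theta^{q^{k+1}}-\theta)L_k$ and $n=l(q-1)$. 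In
$$(\theta-\theta^q)\,\iota(j_l)=\sum_{k\ge 0}\frac{(\theta-\theta^{q^{k+1}})^l-(\theta^q-\theta^{q^{k+1}})^l}{L_k^{n}},$$
each piece should then be of the form $\pm(\theta^{q^{m}}-\theta)^{ql}/L_{m}^{\,n}$, up to a power of $L_{m-1}$. So the series should telescope. I expect the outcome to be that $\iota(u)$ is an explicit element of $\iota(\partial(K)\pi_n)=K\widetilde{\pi}^n$, most plausibly $0$ or a boundary term from $k=0$.

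This telescoping is the main obstacle. The signs $(-1)^{kn}$ and the exact powers must line up, and if the sum does not collapse termwise I would first try regrouping consecutive indices $k$ and $k+1$.

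\textbf{Step 2: conclude part~1.} If $\iota(u)=0$, I would like $u=0$. Yu's theorem as stated needs two nonzero vectors, so I compare $u$ with an element of the period lattice rather than with $0$. Both $u$ and $\partial(a)\pi_n$ (for suitable $a\in K$) have exponential in $\Cn(\overline K)$. Since $q-1\mid n$, the period $\widetilde{\pi}^n$ lies in $K_\infty$, so $\pi_n\in\Lie_{\Cn}(K_\infty)$. Matching last coordinates, Yu's theorem forces $u\in\partial(A)\pi_n$ after clearing denominators, hence $\Cn_{\theta-\theta^q}(J_l)=0$.

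For $J_l\neq 0$, I would compute it directly. The leading term $\Cn_{c_{l,0}}(e_n)$ has $\deg c_{l,0}=q(l-1)$, and its last coordinate is $c_{l,0}$. For $i\ge1$ the contributions have strictly smaller degree by the bound $\deg c_{l,i}\le q(l-i-1)$, so $J_l$ cannot vanish. Equivalently, $j_l\notin\ker\exp_{\Cn}$, because its last coordinate is not a $K$-multiple of $\widetilde{\pi}^n$ of the required shape.

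\textbf{Step 3: part~2.} Run the same comparison with Anderson--Thakur's special point $\mathfrak{z}_n=\sum_j\partial(h_{n,j})\log_{\Cn}(\theta^{qj}e_n)$, which by \eqref{eq : zn <-> b} lies in the rank-one space $W$. Applying the twisted Yu theorem (part~2 of Theorem~\ref{th : Yu}) before specializing, I would identify $\mathfrak{z}_n$, modulo $\partial(A)\pi_n$, with $\partial(BC_n)\,u$. The natural candidate is $\mathfrak{z}_n-\partial(BC_n(\theta-\theta^q))j_l$, which lies in the period lattice because $\iota(\mathfrak{z}_n)=\Gamma_n\zeta(n)=BC_n\widetilde{\pi}^n$.

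Then I would compare coefficients of the two $\partial(A)$-combinations of the points $\log_{\Cn}(\theta^{qi}e_n)$ with $0\le i\le l-1$. This requires $\deg_t\alpha_n\le q(l-1)$, i.e.\ $s\le l-1$, and an independence statement for these $l$ logarithms modulo periods, which again should follow from Yu's theorem. The conclusion would be $h_{n,i}=BC_n\,c_{l,i}(\theta-\theta^q)$ for every $i$, which is part~2.
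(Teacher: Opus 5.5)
The paper does not prove this statement; it quotes it from Pellarin. Judged on its own, your plan has a genuine gap in Step~3, and Step~1 stops short of the decisive computation.

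\textbf{Step 3 (part 2) cannot work as designed.} Suppose you have the single vector identity $\mathfrak{z}_n=\partial(BC_n(\theta-\theta^q))j_l$; Yu's theorem does give this once $\iota(j_l)$ is known. It yields only one $K$-linear relation, $\sum_i\bigl(h_{n,i}-BC_n(\theta-\theta^q)c_{l,i}\bigr)\mathcal{L}_n(\theta^{qi})=0$. Splitting it into the $l$ identities $h_{n,i}=BC_n(\theta-\theta^q)c_{l,i}$ requires the vectors $\log_{\Cn}(\theta^{qi}e_n)$ to be $\partial(K)$-linearly independent.

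Yu's theorem does not give this. It only says that a logarithm of an algebraic point is determined by its last coordinate. So it turns $\partial(K)$-relations among these vectors into $K$-relations among the $\mathcal{L}_n(\theta^{qi})$, and rules none out.

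\emph{Independence modulo periods} is false outright, since part~1 says that the nontrivial combination $\partial(\theta-\theta^q)j_l$ is a period. Plain independence also fails in general. For $q=2$ and $n=l=2$ one has $C^{\otimes 2}_{\theta^2}(0,x)^T=(0,\theta^2x+x^2)^T$. Hence $\theta^2e_2$ is killed by $\theta^2$ and $e_2$ by $\theta^4+\theta^2$, so both logarithms lie on the line $\partial(K)\pi_2$.

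The underlying problem is what part~2 actually says. Together with $s\le l-1$, it is the polynomial identity $\alpha_n(t)=BC_n\bigl((\theta-t^q)^l-(\theta^q-t^q)^l\bigr)$. Since a polynomial is determined by its values at the infinitely many points $t=\theta^{q^k}$, this is equivalent to $\alpha_n(\theta^{q^k})=BC_n(\theta-\theta^q)S_l(\theta^{q^{k+1}})$ for every $k\ge 0$. Via $\iota(\widetilde{\mathfrak{z}_n})=\Gamma_n\zeta(n,z)$, that is one identity for each power sum $\sum_{a\in A_{+,k}}a^{-n}$. Equivalently, it is the coefficientwise equality $\iota(\widetilde{\mathfrak{z}_n})=BC_n(\theta-\theta^q)\,\iota(\widetilde{j_l})$ in $z$.

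An identity at $z=1$ only sees the sum over $k$. The twisted Yu theorem is no help, because its hypothesis is exactly this equality of last coordinates. Nor can you borrow $\widetilde{j_l}\in\widetilde W$ or the formula for $\iota(\widetilde{j_l})$ from the paper: both are derived there from part~2, so using them would be circular. The missing ingredient is Pellarin's actual argument.

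\textbf{Step 1 (part 1) is salvageable, but the decisive evaluation is missing and your predicted outcome is wrong.} Put $a_k=\theta^{q^k}-\theta$ and $b_k=a_k^{ql}/L_k^n$, so $b_0=0$. From $L_{k+1}^{q-1}=a_{k+1}^{q-1}L_k^{q-1}$ one gets $a_{k+1}^l/L_k^n=b_{k+1}$. Hence $(\theta-\theta^q)\iota(j_l)=(-1)^l\lim_K b_{K+1}$, with $b_{K+1}=(a_{K+1}/L_K^{q-1})^l$.

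This does not tend to $0$. It has constant valuation $-ql$ and, by the product formula for $\widetilde\pi$, converges to $(-\widetilde\pi^{q-1})^l$. So $\iota(u)=\widetilde\pi^n=\iota(\pi_n)$, which is neither $0$ nor a $k=0$ boundary term. Had it been $0$, Yu's theorem would force $u=0$ and hence $J_l=0$.

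The exact value also matters. Knowing only $\iota(u)\in K\widetilde\pi^n$ and \emph{clearing denominators} would show that $J_l$ is torsion, not that it is killed by $\theta-\theta^q$. With $\iota(u)=\iota(\pi_n)$, Yu's theorem gives $u=\pi_n$, hence $\Cn_{\theta-\theta^q}(J_l)=0$.

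Your degree argument for $J_l\neq 0$ is wrong. One has $c_{l,i}=(-1)^i\binom{l}{i}\frac{\theta^{l-i}-\theta^{q(l-i)}}{\theta-\theta^q}$, so each $c_{l,i}\theta^{qi}$ with $p\nmid\binom{l}{i}$ has degree $q(l-1)$, the same as $c_{l,0}$. For example, with $q=3$ and $l=2$, $J_2=\partial(\theta+\theta^3)e_4+\theta^3e_4$. Moreover, for $l\ge q$ the $\tau$-terms of $\Cn_{c_{l,0}}$ also contribute. Use your alternative instead: $\iota(j_l)=\widetilde\pi^n/(\theta-\theta^q)\notin A\widetilde\pi^n=\iota(\ker\exp_{\Cn})$.
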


\begin{prop}\label{prop : properties jl}
    Write $n=l(q-1)$. We have:
    \begin{enumerate}
    \item $\exp_{\Cn}(j_l)=J_l$ and $\exp_{\widetilde{\Cn}}(\widetilde{j_l})=\widetilde{J_l}.$
        \item $z_n = \partial(BC_n(\theta-\theta^q))j_l$ and $\widetilde{z_n} = \partial(BC_n(\theta-\theta^q))\widetilde{j_l}$.
        \item $j_l\in U(\Cn/A) \cap W$ and $\widetilde{j_l}\in U(\widetilde{\Cn}/\widetilde{A})\cap\widetilde{W}$.
        \item $\iota(j_l)=\frac{\widetilde{\pi}^n}{\theta-\theta^q}$ and $\iota(\widetilde{j_l)}=\frac{\Gamma_n\zeta(n,z)}{BC_n(\theta-\theta^q)}$.
    \end{enumerate}
\end{prop}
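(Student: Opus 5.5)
The proof follows the order of the four items, since each one feeds the next.

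For (1), I would use that $\exp_{\Cn}$ is $A$-linear in the sense $\exp_{\Cn}\partial(a)=\Cn_a\exp_{\Cn}$. Then
$$\exp_{\Cn}(j_l)=\sum_i \Cn_{c_{l,i}}\exp_{\Cn}\log_{\Cn}(\theta^{qi}e_n)=\sum_i \Cn_{c_{l,i}}(\theta^{qi}e_n)=J_l.$$
This requires each $\log_{\Cn}(\theta^{qi}e_n)$ to converge and to invert $\exp_{\Cn}$ there. I check this with the convergence criterion recalled above: only the last coordinate is nonzero, and we need $v_\infty(\theta^{qi})=-qi>-\frac{qn}{q-1}=-ql$. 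This holds since $i\le l-1$. On the domain of convergence, $\exp\log=\mathrm{id}$ holds as an identity of convergent series, which gives (1). The twisted statement is proved identically, using the twisted convergence criterion and $\exp_{\widetilde{\Cn}}\partial(a)=\widetilde{\Cn}_a\exp_{\widetilde{\Cn}}$.

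For (2), Theorem~\ref{th : Pellarin point}(2) gives $h_{n,i}=BC_n(\theta-\theta^q)c_{l,i}$ for $0\le i\le l-1$. I also need $\alpha_n(t)$ to have $t^q$-degree at most $l-1$, i.e.\ $h_{n,j}=0$ for $j\ge l$ (the sum over $j$ in $\mathfrak z_n$ runs only up to $s$). This is standard: $\deg_t\alpha_n<n\cdot q/(q-1)$, so $qj<ql$. If this is not immediate from the references, I would deduce it from Pellarin's result, which should express $\alpha_n$ as $BC_n(\theta-\theta^q)S_l(t^q)$. Since $\partial$ is a ring morphism,
$$\mathfrak z_n=\sum_i\partial(BC_n(\theta-\theta^q))\partial(c_{l,i})\log_{\Cn}(\theta^{qi}e_n)=\partial(BC_n(\theta-\theta^q))j_l,$$
and the twisted identity follows verbatim.

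For (3), part (1) together with $J_l\in\Cn(A)$ (since $c_{l,i}\in A$) gives $j_l\in U(\Cn/A)$, and similarly $\widetilde{j_l}\in U(\widetilde{\Cn}/\widetilde A)$. Membership in $W$ comes from (2): $\mathfrak z_n\in W$ and $W$ is a $\partial_E(K_\infty)$-subspace, so it suffices that $BC_n\neq0$. This holds because $\zeta(n)\neq 0$ (it is a convergent product of nonzero Euler factors, as noted in the introduction). Likewise $\widetilde{\mathfrak z_n}\in\widetilde W$ yields $\widetilde{j_l}\in\widetilde W$.

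For (4), I apply $\iota$ to (2). Because $\partial(x)$ is upper triangular with diagonal $x$, we have $\iota(\partial(x)v)=x\,\iota(v)$. Hence
$$\iota(\widetilde{j_l})=\frac{\Gamma_n\zeta(n,z)}{BC_n(\theta-\theta^q)},$$
and at $z=1$, using $BC_n=\zeta(n)\Gamma_n/\widetilde\pi^n$, we get $\iota(j_l)=\widetilde\pi^n/(\theta-\theta^q)$.

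The main obstacle is (2): checking that the index ranges of the Anderson--Thakur coefficients and of Pellarin's $c_{l,i}$ match exactly. Everything else is formal.
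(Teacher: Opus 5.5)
Your proof is correct and follows the paper's argument item by item. You use the functional equation $\exp_{\Cn}\partial(a)=\Cn_a\exp_{\Cn}$ for (1), the coefficient identity $h_{n,i}=BC_n(\theta-\theta^q)c_{l,i}$ for (2), (1) and (2) together with $\mathfrak{z}_n\in W$ for (3), and $\iota$ applied to (2) for (4).

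The extra checks you add are points the paper leaves implicit, and your justifications of them are sound:
\begin{itemize}
\item convergence of $\log_{\Cn}(\theta^{qi}e_n)$ for $i\le l-1$;
\item $h_{n,j}=0$ for $j\ge l$, so that the index ranges match;
\item $BC_n\neq 0$, so that $j_l$ can be recovered from $\mathfrak{z}_n$ inside $W$.
\end{itemize}
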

\begin{proof}
    The first point, follows directly from the functional equation of the exponential. 
    
    The second point is a direct consequence of the second point of Theorem \ref{th : Pellarin point}. Indeed,
    \begin{align*}
        z_n&=\sum_{i=0}^{l-1}\partial(h_{n,i})\log_{\Cn}(\theta^{qi}e_n) = \partial(BC_n(\theta-\theta^q))\sum_{i=0}^{l-1}\partial(c_{l,i})\log_{\Cn}(\theta^{qi}e_n) \\
        &= \partial(BC_n(\theta-\theta^q)) j_l.
    \end{align*}
    
    For the third point, it is clear that $\exp_{\Cn}(j_l)\in \Cn(A)$, and since $z_n\in W$, we have $j_l\in W$ by the previous point. The same arguments apply in the twisted case.
    
    Finally, using the second point, we have
    $$\iota(\widetilde{j_l})=\frac{1}{BC_n(\theta-\theta^q)}\cdot \iota(\widetilde{z_n}) = \frac{\Gamma_n\zeta(n,z)}{BC_n(\theta-\theta^q)}.$$
\end{proof}

\begin{remark} \label{rk : degree z}
Note that 
    $$\deg_\theta(c_{l,i})\leq q(l-1),$$
    with equality if and only if $i=0$.
    Therefore if $q\neq 2$, then
    $$\deg_\theta((\theta-\theta^q)c_{l,i}) \leq lq < 2l(q-1)=2n\qquad \text{ for all } 0\leq i\leq l-1.$$
    If $q=2$, then $lq = 2l(q-1)=2n$. We have
    $$\deg_\theta((\theta-\theta^q)c_{l,i}) < 2n\qquad \text{ for all } 1\leq i\leq l-1,$$
    and
    $$\deg_\theta((\theta-\theta^q)c_{l,0}) =2n.$$
\end{remark}

We will need the following combinatorial identities in our computations. 
\begin{lemma} \label{lem : combinatorial identities}
    The following identities involving binomial coefficients are true:
    \begin{enumerate}
        \item For any $a,b,c\in \N$, we have
        $$\binom{a}{b}\binom{a-b}{c}=\binom{a}{c}\binom{a-c}{b}.$$ \label{lem : combinatorial identities part 1}
        \item For any $a,b\in \N$ we have
        $$\sum_{i=0}^a(-1)^i\binom{a}{i}\binom{qi}{b}\equiv \begin{cases}
            (-1)^{a} \pmod{p} & b=qa\\
            0 \pmod{p} & b\neq qa
        \end{cases} .$$ \label{lem : combinatorial identities part 2}
    \end{enumerate}
\end{lemma}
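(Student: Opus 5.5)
Part 1 is a pure counting identity, and I would prove it by rewriting both sides in factorials. The case $b+c>a$ needs a separate check. There both sides are $0$: the left-hand side vanishes either because $\binom{a}{b}=0$ or because $a-b<c$, and the right-hand side vanishes symmetrically. When $b+c\le a$, both sides equal the multinomial coefficient
$$\frac{a!}{b!\,c!\,(a-b-c)!}.$$
This is the number of ways to choose disjoint subsets of sizes $b$ and $c$ from an $a$-element set, counted in either order. The identity holds in $\mathbb{Z}$, so it holds modulo $p$ as well.

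For part 2, I would use a generating function in an auxiliary variable $X$. Consider
$$\sum_{b\ge 0}\left(\sum_{i=0}^a(-1)^i\binom{a}{i}\binom{qi}{b}\right)X^b=\sum_{i=0}^a(-1)^i\binom{a}{i}(1+X)^{qi}=\bigl(1-(1+X)^q\bigr)^a.$$
Since $q$ is a power of $p$, in $\F_p[X]$ we have $(1+X)^q=1+X^q$. Hence the right-hand side reduces modulo $p$ to $(-X^q)^a=(-1)^aX^{qa}$. Comparing coefficients of $X^b$ gives exactly the stated congruence: $(-1)^a$ when $b=qa$, and $0$ otherwise.

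The only point needing care is reducing the integer identity modulo $p$ coefficientwise. The polynomial identity holds in $\mathbb{Z}[X]$, and reduction $\mathbb{Z}[X]\to\F_p[X]$ is a ring map, so Frobenius can be applied after reducing. No step here is a genuine obstacle.
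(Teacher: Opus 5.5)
Your proof is correct and follows the paper's argument essentially verbatim. For part 1 you reduce both sides to the multinomial coefficient $\frac{a!}{b!\,c!\,(a-b-c)!}$ and handle the degenerate case $b+c>a$ separately. For part 2 you read the sum as the coefficient of $X^b$ in $(1-(1+X)^q)^a \equiv (-X^q)^a \pmod p$.
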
 

\begin{proof}
    \begin{enumerate}
        \item Assume $a\geq b+c$; otherwise both sides of the equality are zero. By a simple calculation, we see that both sides equal
        $$\frac{a!}{b!c!(a-b-c)!}.$$
        \item The sum we want to compute is the coefficient of $t^b$ in $(1-(1+t)^q)^a$. But $(1-(1+t)^q)^a \equiv (-t^q)^{a}\pmod{p}$, so we obtain the desired result. \qedhere
    \end{enumerate}
\end{proof}

Let 
$$\mathcal{X}_l:= \left(-\binom{l}{n-i}(\theta-\theta^q)^{l-(n-i)}\right)_{1\leq i \leq n} = -\begin{pmatrix}
    0 \\
    \vdots \\
    0 \\
    1 \\
    l (\theta-\theta^q)\\
    \vdots \\
    l(\theta-\theta^q)^{l-1}\\
    (\theta-\theta^q)^l
\end{pmatrix}\in \Mat_{n\times 1}(K).$$
We recall that $e_n=(0,0,\ldots,1)^T\in \Mat_{n\times 1}(K)$.

\begin{lemma} \label{lem : derivative J_l}
    Let $n=l(q-1)$. If $q\neq2$, then
    $$\left(\frac{d}{dz} \widetilde{\Cn_{\theta-\theta^q}}(\widetilde{J_l})\right)|_{z=1} = \mathcal{X}_l.$$
    If $q=2$,
    $$\left(\frac{d}{dz} \widetilde{\Cn_{\theta-\theta^q}}(\widetilde{J_l})\right)|_{z=1} = \mathcal{X}_l + e_n.$$
\end{lemma}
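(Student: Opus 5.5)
The plan is to write the twisted action explicitly as a polynomial in $z$, differentiate, and reduce everything to a single $\tau$-coefficient that can be computed with hyperderivatives. First, since $\widetilde{\Cn}$ is a morphism of $\F_q(z)$-algebras, I rewrite
$$\widetilde{\Cn_{\theta-\theta^q}}(\widetilde{J_l})=\sum_{i=0}^{l-1}\widetilde{\Cn_{b_i}}(\theta^{qi}e_n),\qquad b_i:=(\theta-\theta^q)c_{l,i}\in A.$$
For any $b\in A$, write $\Cn_b=\sum_{k\ge 0}(\Cn_b)_k\tau^k$, so that $\widetilde{\Cn_b}=\sum_k(\Cn_b)_k z^k\tau^k$. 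Differentiating and evaluating at $z=1$ then gives
$$\Big(\tfrac{d}{dz}\widetilde{\Cn_{\theta-\theta^q}}(\widetilde{J_l})\Big)\Big|_{z=1}=\sum_{i=0}^{l-1}\sum_{k\ge1}k\,(\Cn_{b_i})_k\,\theta^{q^{k+1}i}e_n .$$

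The second step bounds which $k$ can occur. Expanding $\Cn_{\theta^m}=(\partial(\theta)+N\tau)^m$, a word containing $k$ copies of $N\tau$ contains blocks $N\partial(\theta)^{(\cdot)}N$. Such a block vanishes unless the power of the nilpotent part $\partial(\theta)-\theta\I_n$ between the two copies of $N$ is at least $n-1$. Hence $(\Cn_b)_k=0$ unless roughly $\deg_\theta b\ge (k-1)n+1$, and certainly $(\Cn_b)_k=0$ for $k\ge 2$ when $\deg_\theta b<2n$. By Remark \ref{rk : degree z}, for $q\neq 2$ every $b_i$ has degree $<2n$, so only $k=1$ survives.

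For $q=2$ the same is true for $i\geq 1$. The term $i=0$ has $\deg b_0=2n$ and may have a nonzero $\tau^2$-coefficient. Its contribution in the formula above carries the factor $k=2$, which is $0$ in characteristic $2$, so the term $k=2$ itself contributes nothing. The extra $e_n$ must therefore come from the top-degree part of the $\tau^1$-coefficient of $b_0$, which behaves differently when $\deg b_0=2n$. I will isolate that top monomial and treat it separately at the end.

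The main computation is the $\tau^1$-coefficient. For $b=\sum_m\beta_m\theta^m$,
$$(\Cn_b)_1=\sum_m\beta_m\sum_{j=0}^{m-1}\partial(\theta^{j})\,N\,\partial(\theta^{m-1-j})^{(1)},$$
where $(\cdot)^{(1)}$ means raising entries to the $q$-th power. Using $\partial(a)=d[a]$ and $Nv=v_1e_n$, applying this to $x\,e_n$ gives the vector whose $r$-th entry is
$$\sum_m\beta_m\sum_j \D_\theta^{(n-r)}(\theta^j)\,\big(\D_\theta^{(n-1)}(\theta^{m-1-j})\big)^q x^q ,$$
with $x=\theta^{qi}$.

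I then plan to substitute $b_i=(\theta-\theta^q)c_{l,i}$ and sum over $i$. The cleanest route is to sum over $i$ first, using that $\sum_i c_{l,i}Y^i=S_l(Y)$. This recombines into expressions in $(\theta-Y)^l-(\theta^q-Y)^l$ evaluated at $Y=\theta^{q^2}$ or $Y=\theta^q$. The binomial sums that appear should then be evaluated with Lemma \ref{lem : combinatorial identities}: part 1 to exchange the order of the hyperderivative binomials, and part 2 to kill every term except the one with $b=qa$. The aim is to recover exactly the entries $-\binom{l}{n-r}(\theta-\theta^q)^{l-(n-r)}$ of $\mathcal{X}_l$.

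This bookkeeping, namely matching the $\D_\theta^{(n-r)}$ and $\D_\theta^{(n-1)}$ binomials with $n=l(q-1)$ and showing that the sum collapses to a single binomial coefficient in each entry, is the step I expect to be the main obstacle. Finally, for $q=2$, I will separately compute the contribution of the leading monomial of $b_0$ (of degree $2n$) and check that it adds exactly $e_n$ to the last coordinate.
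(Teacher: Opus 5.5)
Your reduction to the $\tau^1$-coefficient is correct in spirit, but the degree bound is misstated. As a matrix, $(\Cn_b)_2$ can be nonzero as soon as $\deg_\theta b\ge n+1$; for example $(C^{\otimes 2}_{\theta^3})_2=N$. What you need, and what is true, is $(\Cn_b)_k\tau^k(xe_n)=0$ for $k\ge 2$ when $\deg_\theta b<2n$: the rightmost $N$ also needs $n-1$ nilpotent factors to its right in order to see the input $xe_n$.

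\textbf{The main gap.} The step you defer as ``the main obstacle'' is the whole content of the lemma, and you give no mechanism for it. Your entries are convolutions $\sum_m\beta_m\sum_j\binom{j}{n-r}\binom{m-1-j}{n-1}\theta^{(\cdots)}$ in which the $\theta$-exponent varies with $j$. The two identities of Lemma \ref{lem : combinatorial identities} do not by themselves collapse such sums.

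The idea you are missing is Theorem \ref{th : Pellarin point}(1): $J_l$ is $(\theta-\theta^q)$-torsion. Since $\sum_i\Cn_{b_i}(\theta^{qi}e_n)=\Cn_{\theta-\theta^q}(J_l)=0$, the sum of your $\tau^1$-contributions equals $-\sum_i\partial(b_i)(\theta^{qi}e_n)$ when $q\neq 2$. The $(n-r)$-th entry of this is the single hyperderivative $-\D_\theta^{(r)}\big((\theta-Y)^l-(\theta^q-Y)^l\big)|_{Y=\theta^q}$. That is exactly where parts 1 and 2 of the lemma apply. Without this you are, in effect, reproving the torsion of $J_l$ by hand.

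\textbf{The $q=2$ plan fails.} The extra $e_n$ does not come from the top monomial of $b_0$. Take $q=2$, $l=n=1$, so $b_0=\theta+\theta^2$. The $\tau$-coefficient of $C_\theta$, applied to $1$, gives $1=e_1$. The $\tau$-coefficient of $C_{\theta^2}$ gives $\theta+\theta^2=\mathcal{X}_1$. So the top monomial produces $\mathcal{X}_1$, and the lower one produces $e_1$.

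In the paper, $e_n$ enters through the $\tau^2$-term $\lc(b_0)\tau^2(e_n)=e_n$ of $\Cn_{b_0}(e_n)$. This term is invisible in the derivative because of the factor $2=0$. It appears only when the torsion relation is used to eliminate $\sum_i\Cn_{b_i}(\theta^{qi}e_n)$.

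\textbf{How to rescue your direct route.} One mechanism that works is a divided difference. The $r$-th entry of your sum equals $\D_u^{(n-r)}\D_w^{(n-1)}$ applied to $\frac{G(u)-G(w)}{u-w}$, where $G(X)=(X-Y)^l-(X^q-Y)^l$, evaluated at $u=\theta$, $w=\theta^q$, $Y=\theta^{q^2}$.
\begin{itemize}
\item The $(X^q-Y)^l$ half equals $-(u-w)^{q-1}H(u^q,w^q)$, where $H$ is the divided difference of $(X-Y)^l$. By Leibniz and Lucas, only one term survives $\D_w^{(n-1)}$. Because $w^q=Y$, that term collapses to $-(u-\theta^q)^l$, which yields $\mathcal{X}_l$.
\item The $(X-Y)^l$ half has $w$-degree $l-1$. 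It therefore survives $\D_w^{(n-1)}$ only when $n=l$, i.e.\ $q=2$, and then it contributes exactly $e_n$.
\end{itemize}
This would prove the lemma without Pellarin's theorem, and would in fact reprove the torsion of $J_l$. It is not, however, what your proposal describes.
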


\begin{proof}
    \underline{Case $q\neq 2$.} By Remark \ref{rk : degree z}, we have that $\deg_\theta(\theta-\theta^q)c_{l,i}<2n$ for all $i$. Observe that if $a\in A$ is such that $\deg_\theta(a) < 2n$, then
    $$\widetilde{\Cn_a}(x e_n) = \partial(a)(xe_n) + z(\Cn_a(xe_n)-\partial(a)(xe_n))\quad \text{ for all } x\in K_\infty.$$
    We can therefore write
    \begin{align*}
        \widetilde{\Cn_{\theta-\theta^q}}(\widetilde{J_l}) &= \sum_{i=0}^{l-1}\left(\partial((\theta-\theta^q)c_{l,i})(\theta^{qi}e_n) + z \left(\Cn_{(\theta-\theta^q)c_{l,i}}(\theta^{qi}e_n) - \partial((\theta-\theta^q)c_{l,i})(\theta^{qi}e_n)\right) \right) .
    \end{align*} 
Hence
\begin{align*}
\frac{d}{dz} \widetilde{\Cn_{\theta-\theta^q}}(\widetilde{J_l}) &=  \sum_{i=0}^{l-1}\Cn_{(\theta-\theta^q)c_{l,i}}(\theta^{qi}e_n) - \sum_{i=0}^{l-1}\partial((\theta-\theta^q)c_{l,i}) (\theta^{qi}e_n) \\
& = \Cn_{\theta-\theta^q}(J_l) - \sum_{i=0}^{l-1}\partial((\theta-\theta^q)c_{l,i}) (\theta^{qi}e_n) \\
&= -\sum_{i=0}^{l-1}\partial((\theta-\theta^q)c_{l,i}) (\theta^{qi}e_n).
\end{align*}

We compute this element using hyperderivatives. Note that the $(n-r)$-th entry is equal to $-\D_\theta^{(r)}((\theta-\theta^q)S_l(Y))|_{Y=\theta^q}$.
\begin{align*}
    \D_\theta^{(r)}((\theta-\theta^q&)S_l(Y)) = \D_\theta^{(r)}((\theta-Y)^l) - \D_\theta^{(r)}((\theta^q-Y)^l) \\
    &= \sum_{i=0}^l(-1)^{i}\binom{l}{i}\D_\theta^{(r)}(\theta^{l-i})Y^i -  \sum_{i=0}^l(-1)^{l-i}\binom{l}{i}\D_\theta^{(r)}(\theta^{qi})Y^{l-i} \\
    &= \sum_{i=0}^l(-1)^i\binom{l}{i}\binom{l-i}{r}\theta^{l-i-r}Y^i -  \sum_{i=0}^l(-1)^{l-i}\binom{l}{i}\binom{qi}{r}\theta^{qi-r}Y^{l-i}.
\end{align*}
Hence
\begin{align*}
    \D_\theta^{(r)}((\theta-\theta^q)S_l(Y))|_{Y=\theta^q} &= \sum_{i=0}^l(-1)^i\binom{l}{i}\binom{l-i}{r}\theta^{l-i-r}\theta^{qi} - \sum_{i=0}^l(-1)^{l-i}\binom{l}{i}\binom{qi}{r}\theta^{ql-r}.
\end{align*}
We compute each sum separately. Using Lemma \ref{lem : combinatorial identities}.\ref{lem : combinatorial identities part 1}, we get for the first sum
\begin{align*}
    \sum_{i=0}^l(-1)^i\binom{l}{i}\binom{l-i}{r}\theta^{l-i-r}\theta^{qi} &= \sum_{i=0}^l(-1)^i\binom{l}{r}\binom{l-r}{i}\theta^{l-r-i}\theta^{qi}\\
    &= \binom{l}{r}(\theta-\theta^q)^{l-r}.
\end{align*}
Next, we want to use Lemma \ref{lem : combinatorial identities}.\ref{lem : combinatorial identities part 2} for the second sum. We obtain
\begin{align*}
    \sum_{i=0}^l(-1)^{l-i}\binom{l}{i}\binom{qi}{r}\theta^{ql-r} = \theta^{ql-r} \sum_{i=0}^l(-1)^{l-i}\binom{l}{i}\binom{qi}{r} = 0,
\end{align*}
since $r < l(q-1) \leq ql$. Thus, we have proven that 
$$\D_\theta^{(r)}((\theta-\theta^q)S_l(Y))|_{Y=\theta^q} = \binom{l}{r}(\theta-\theta^q)^{l-r}.$$
Therefore
$$\left(\frac{d}{dz} \widetilde{\Cn_{\theta-\theta^q}}(\widetilde{J_l})\right)|_{z=1} = \left(-\binom{l}{n-i}(\theta-\theta^q)^{l-(n-i)}\right)_{1\leq i \leq n} = \mathcal{X}_l.$$

\underline{Case $q=2$.} The proof is very similar, but we will get a small extra term when we compute the derivative. By Remark \ref{rk : degree z}, $\deg_\theta(\theta+\theta^q)c_{l,0}=2n$, while $\deg_\theta(\theta+\theta^q)c_{l,i}<2n$ for all $i\neq 0$. We note that, for a general $q$, if we take $a\in A$ with $\deg_\theta(a)=2n$, then for all $x\in K_\infty$ we have
$$\widetilde{\Cn_a}(x e_n) = \partial(a)(xe_n) + z(\Cn_a(xe_n)-\partial(a)(xe_n)-\lc(a)x^{q^2}e_n)+ z^2\lc(a)x^{q^2}e_n,$$
where $\lc(a)$ is the leading coefficient of $a$.
Returning to the case $q=2$, we can thus write
    \begin{align*}
        \widetilde{\Cn_{\theta+\theta^q}}(\widetilde{J_l}) &= \partial((\theta+\theta^q)c_{l,0})(e_n)
 + z \left(\Cn_{(\theta+\theta^q)c_{l,0}}(e_n) + \partial((\theta+\theta^q)c_{l,0})(e_n) + e_n\right) + z^2e_n \\ 
 &+\sum_{i=1}^{l+1}\left(\partial((\theta+\theta^q)c_{l,i})(\theta^{qi}e_n)
 + z \left(\Cn_{(\theta+\theta^q)c_{l,i}}(\theta^{qi}e_n) + \partial((\theta+\theta^q)c_{l,i})(\theta^{qi}e_n)\right)\right) .
    \end{align*} 
Hence
\begin{align*}
\frac{d}{dz} \widetilde{\Cn_{\theta+\theta^q}}(\widetilde{J_l}) &=  
\left(\sum_{i=0}^{l+1}\partial((\theta+\theta^q)c_{l,i}) (\theta^{qi}e_n)\right) + e_n + 2ze_n,
\end{align*}
and 
\begin{align*}
    \left(\frac{d}{dz} \widetilde{\Cn_{\theta-\theta^q}}(\widetilde{J_l})\right)|_{z=1} = \left(\sum_{i=0}^{l-1}\partial((\theta+\theta^q)c_{l,i}) (\theta^{qi}e_n)\right) + e_n.
\end{align*}
We conclude in the same way as in the case $q\neq2$.
\end{proof}

\begin{theorem}\label{th : formula derivative Cn}
    Suppose $n=l(q-1)$. If $q\neq 2$, then
    $$\zeta_P^{(1)}(n) = \frac{(P^n-1)BC_n}{P^n\Gamma_n}\iota(\Log_{\Cn,P}(\mathcal{X}_l)).$$
    If $q=2$,
    $$\zeta_P^{(1)}(n) = \frac{(P^n+1)BC_n}{P^n\Gamma_n}\iota(\Log_{\Cn,P}(\mathcal{X}_l+e_n)).$$
\end{theorem}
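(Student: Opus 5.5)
We will compute $\zeta_P^{(1)}(n)$ by working with the explicit twisted point $\widetilde{j_l}$ instead of an abstract generator of $U(\widetilde{\Cn}/\widetilde{A})\cap\widetilde{W}$. The key is to express $\zeta_P(n,z)$ exactly, not just up to $\widetilde{K}^*$.

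\textbf{Step 1: an exact formula for $\zeta(n,z)$.} By Proposition \ref{prop : properties jl}, $\iota(\widetilde{j_l})=\Gamma_n\zeta(n,z)/(BC_n(\theta-\theta^q))$. We have $\exp_{\widetilde{\Cn}}(\widetilde{j_l})=\widetilde{J_l}\in\widetilde{\Cn}(A[z])$, so formally in $F((z))$ we get $\widetilde{j_l}=\Log_{\widetilde{\Cn},P}(\widetilde{J_l})$. Hence
$$\zeta(n,z)=\frac{BC_n(\theta-\theta^q)}{\Gamma_n}\,\iota\bigl(\Log_{\widetilde{\Cn},P}(\widetilde{J_l})\bigr)$$
as an identity of formal series in $K[[z]]$.

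Both sides of this identity should be compared in $K[[z]]$, as in the proof of Proposition \ref{prop : P-adic class formula integral level}. The left side is $\zeta(n,z)=\zeta_P(n,z)\,Z_P$, where
$$Z_P=\frac{P^n}{P^n-z^{\deg P}}.$$
The right side converges $P$-adically near $z=1$. Therefore
$$\zeta_P(n,z)=\frac{P^n-z^{\deg P}}{P^n}\cdot\frac{BC_n(\theta-\theta^q)}{\Gamma_n}\,\iota\bigl(\Log_{\widetilde{\Cn},P}(\widetilde{J_l})\bigr)$$
holds in $\T_P(K)$. The step needing most care is justifying that this formal identity transfers to a $P$-adic one. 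I would follow the same argument as the paper: both sides lie in $K[[z]]$ and agree coefficientwise, and the right side lies in $\frac{1}{h(z)}\T_P(K)$.

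\textbf{Step 2: differentiate at $z=1$.} Write $\widetilde{\Cn}_{\theta-\theta^q}$ for the twisted action. Since $\partial(\theta-\theta^q)$ is invertible on $\Lie$, I rewrite
$$\Log_{\widetilde{\Cn},P}(\widetilde{J_l})=\partial(\theta-\theta^q)^{-1}\Log_{\widetilde{\Cn},P}\bigl(\widetilde{\Cn}_{\theta-\theta^q}(\widetilde{J_l})\bigr),$$
using the functional equation of $\Log$.

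At $z=1$, the point $\Cn_{\theta-\theta^q}(J_l)=0$ by Theorem \ref{th : Pellarin point}, so $\Log$ of it vanishes, consistent with $\zeta_P(n)=0$. Now differentiate with the product rule, exactly as in the proof of Proposition \ref{prop : derivative P-adic zero iff torsion}:
$$\Bigl(\D_z\Log_{\widetilde{\Cn},P}(Y(z))\Bigr)\big|_{z=1}=\Log_{\Cn,P}\bigl(\D_zY|_{z=1}\bigr)\quad\text{when } Y(1)=0,$$
since the term $(\D_z\Log)(Y)$ vanishes at $Y(1)=0$. Lemma \ref{lem : derivative J_l} gives $\D_zY|_{z=1}=\mathcal{X}_l$, or $\mathcal{X}_l+e_n$ when $q=2$.

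\textbf{Step 3: assemble the formula.} The factor $\zeta_P(n,z)$ is a product whose second factor vanishes at $z=1$, so only the derivative of that factor contributes. Evaluating the prefactor at $z=1$ gives $(P^n-1)/P^n$, and the factor $(\theta-\theta^q)$ cancels. This yields
$$\zeta_P^{(1)}(n)=\frac{(P^n-1)BC_n}{P^n\Gamma_n}\,\iota\bigl(\Log_{\Cn,P}(\mathcal{X}_l)\bigr),$$
with $\mathcal{X}_l+e_n$ in place of $\mathcal{X}_l$ when $q=2$.

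For $q=2$ the prefactor $(P^n-1)/P^n$ is the same thing as $(P^n+1)/P^n$, since $-1=1$ in characteristic $2$. This matches the stated form.

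The main obstacle is Step 1: making rigorous the passage from the formal $K((z))$ identity to a $P$-adic identity in $\T_P(K)$, including the convergence of $\Log_{\widetilde{\Cn},P}$ on $\widetilde{J_l}\in A[z]^n$. Compatibility of hyperderivatives with the matrix $\partial(\cdot)$ is handled by Lemma \ref{lem : d(g)'=0 iff g'=0}.
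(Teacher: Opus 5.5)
Your argument is correct, and it is a more direct route than the paper's.

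\textbf{The paper's route.} The paper runs the computation through the machinery of Proposition~\ref{prop : derivative P-adic zero iff torsion}. It writes $\Log_{\widetilde{\Cn},P}(\widetilde{\Cn}_{\theta-\theta^q}(\widetilde{J_l}))=\partial(\widetilde{g})\widetilde{b}$, with $\widetilde{b}$ a generator of $\Lie_{\widetilde{\Cn}}(\widetilde{A})\cap\widetilde{W}$, and uses the factorization $\zeta_P(n,z)=\widetilde{g}\,\widetilde{\alpha}$ supplied by the $P$-adic reduced class formula. It then computes two quantities separately: $(\frac{d}{dz}\widetilde{g})|_{z=1}=\iota(\Log_{\Cn,P}(\mathcal{X}_l))/\iota(b)$ and $\alpha=BC_n\iota(b)/(\Gamma_n Z_P(\Cn/A))$. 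The second goes through Equation~\eqref{eq : zn <-> b}, which relies on Yu's theorem and the twisted reduced class formula. The factor $\iota(b)$ then cancels.

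\textbf{Your route.} You go straight from $\iota(\widetilde{j_l})=\Gamma_n\zeta(n,z)/(BC_n(\theta-\theta^q))$ to an exact formula for $\zeta_P(n,z)$. That identity needs only Anderson--Thakur and Pellarin's relation $h_{n,i}=BC_n c_{l,i}(\theta-\theta^q)$. So $\widetilde{W}$, $\widetilde{b}$, the class formula and Yu's theorem never enter. Your only analytic input is the formal-to-$P$-adic transfer already used in Proposition~\ref{prop : P-adic class formula integral level}, together with $h(1)\neq 0$.

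\textbf{Comparison.} If you substitute the paper's values of $\alpha$ and $(\frac{d}{dz}\widetilde{g})|_{z=1}$ back in, you get exactly your identity, so the two proofs compute the same quantity. The paper's packaging presents the formula as an instance of the general rank-one mechanism of Section~\ref{sec : ord van L_P}. Yours is shorter and self-contained for $\Cn$.

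\textbf{A simplification for Step 2.} Since $\tau$ fixes $z$, the map $\Log_{\widetilde{\Cn},P}$ is $\F_q[z]$-linear. Write $\widetilde{\Cn}_{\theta-\theta^q}(\widetilde{J_l})=(z-1)Y_1$ with $Y_1\in A[z]^n$; then $Y_1|_{z=1}$ is the value given in Lemma~\ref{lem : derivative J_l}. This gives the derivative at $z=1$ immediately, without differentiating the operator itself.
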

\begin{proof}
    We give the proof for $q\neq 2$; the case $q=2$ is completely analogous. We will follow the notation of the proof of Proposition \ref{prop : derivative P-adic zero iff torsion}, where we take $\widetilde{w} = \widetilde{j_l}$ and $a=\theta-\theta^q$. Taking the derivative of Equation \eqref{eq : LP=ga} and evaluating at $z=1$ gives us
    \begin{equation}\label{eq : explicit computation derivative}
        \zeta_P^{(1)}(n) = \left(\frac{d}{dz}\widetilde{g}\right)|_{z=1}\cdot\alpha,
    \end{equation}
    where we have used that $\widetilde{g}|_{z=1}=0$ whenever $q-1 \mid n$. \vspace{2mm}
    
    $\bullet$ \emph{Computation of $ \left(\frac{d}{dz}\widetilde{g}\right)|_{z=1}.$}
    Taking the derivative of Equation \eqref{eq : aw = gb} and evaluating at $z=1$ gives us
    $$\Log_{\Cn,P}\left(\left(\frac{d}{dz}\widetilde{\Cn_a}(\exp_{\widetilde{\Cn}}(\widetilde{w})\right)|_{z=1}\right) = \left(\frac{d}{dz}\partial(\widetilde{g})\right)|_{z=1} \cdot b.$$
    Note that by Lemma \ref{lem : derivative J_l}, the quantity inside $\Log_{\Cn,P}$ is just $\mathcal{X}_l$. Now looking at the last coordinate, we obtain
    $$\iota(\Log_{\Cn,P}(\mathcal{X}_l))= \left(\frac{d}{dz}\widetilde{g}\right)|_{z=1} \cdot \iota(b),$$
    and since $\iota(b)\neq 0$ (because $\iota:W\rightarrow K_\infty$ is injective), we get
    $$\left(\frac{d}{dz}\widetilde{g}\right)|_{z=1} = \frac{\iota(\Log_{\Cn,P}(\mathcal{X}_l))}{\iota(b)}.$$ \vspace{1mm}
    
    $\bullet$ \emph{Computation of $\alpha$.} 
    By Equation \eqref{eq : zn <-> b} and Proposition \ref{prop : properties jl} part 2,
    $$\partial(\theta-\theta^q)\widetilde{j_l} = \partial\left(\frac{\Gamma_n\zeta(n,z)}{BC_n\iota(\widetilde{b})}\right)\widetilde{b}.$$
    Formally in $K((z))$, Equation \eqref{eq : aw = gb} reads as $\partial(\theta-\theta^q)\widetilde{j_l} = \partial(\widetilde{g})\widetilde{b}$. Therefore
    $$\widetilde{g} = \frac{\Gamma_n\zeta(n,z)}{BC_n\iota(\widetilde{b})},$$
    and by Equation \eqref{eq : LP=ga},
    $$\widetilde{\alpha} = \frac{\zeta_P(n,z)}{\widetilde{g}} = \frac{BC_n\iota(\widetilde{b})}{\Gamma_n Z_P(\widetilde{\Cn}/\widetilde{A})},$$
    $$\alpha = \frac{BC_n\iota(b)}{\Gamma_n Z_P(\Cn/A)}.$$
    Substituting the computed values of $\alpha$ and the derivative of $\widetilde{g}$ back into Equation \eqref{eq : explicit computation derivative} gives us the desired result.
\end{proof}

\begin{remark}
    Write 
    $$\Cn_{P^n-1}(\mathcal{X}_l) = \begin{pmatrix}
        y_1 \\
        \vdots \\
        y_n
    \end{pmatrix}.$$
    Since 
    $$\Log_{\Cn,P}(\mathcal{X}_l) =  \partial(P^n-1)^{-1} \log_{\Cn,P}(\Cn_{P^n-1}(\mathcal{X}_l)),$$
    we have
    $$\iota (\Log_{\Cn,P}(\mathcal{X}_l)) = (P^n-1)^{-1} \sum_{i=1}^{n} (-\Delta)^{n-i}\mathcal{L}_n(y_i).$$
    Thus, if $q\neq 2$, then we can write
    $$\zeta_P^{(1)}(n) = \frac{BC_n}{P^n \Gamma_n}\sum_{i=1}^n (-\Delta)^{n-i}\mathcal{L}_n(y_i),$$
    and we can do the same thing for $q=2$.
\end{remark}

%%%%%%%%%%%%%%%%%%%%%%%%%%%%%%%%%%%%%%%%%%%%%%%%%%%%%%%%%%%%%%%%%%%%%%%%%%%%%%%%%%%%%%%%%%%%%%%%%%%%%%%%%%%%%%%%%%
%%%%%%%%%%%%%%%%%%%%%%%%%%%%%%%%%%%%%%%%%%%%%%%%%%%%%%%%%%%%%%%%%%%%%%%%%%%%%%%%%%%%%%%%%%%%%%%%%%%%%%%%%%%%%%%%%%
%%%%%%%%%%%%%%%%%%%%%%%%%%%%%%%%%%%%%%%%%%%%%%%%%%%%%%%%%%%%%%%%%%%%%%%%%%%%%%%%%%%%%%%%%%%%%%%%%%%%%%%%%%%%%%%%%%

\section{$P$-adic Pellarin $L$-series} \label{sec : Pellarin L-series}
We introduce some new notation. Let $t_1,\ldots,t_s$ be new variables. Then
\begin{itemize}
    \item $\underline{t}$ denotes the set of variables $t_1,\ldots,t_s$;
    \item with this notation, $\F_q(\underline{t})=\F_q(t_1,\ldots,t_s)$;
    \item $K_s:=K(\underline{t}),\quad A_s:=\F_q(\underline{t})[\theta],\quad \widetilde{K_s}:=K_s(z),\quad \widetilde{A_s}:=\F_q(z)A_s$;
    \item $K_{s,\infty}:=\Fqs ((\frac{1}{\theta})), \quad K_{s,P}:=\F_{q^{\deg(P)}}(\underline{t})((P));$
    \item $\widetilde{K_{s,\infty}}:=\Fq(\underline{t},z)((\frac{1}{\theta})), \quad \widetilde{K_{s,P}}:=\F_{q^{\deg(P)}}(\underline{t},z)((P));$
    \item $F_s:=F(\underline{t}),\quad  \OFs:=\Fqs\OF, \quad\widetilde{F_s}:=F(\underline{t})(z),\quad  \widetilde{\OFs}:=\Fq(\underline{t},z)\OF$;
    \item $F_{s,\infty}:=F_s\otimes_{K_s} K_{s,\infty},\quad F_{s,P}:=F_s\otimes_{K_s} K_{s,P};$
    \item $\widetilde{F_{s,\infty}}:=\widetilde{F_s}\otimes_{\widetilde{K_s}} \widetilde{K_{s,\infty}},\quad \widetilde{F_{s,P}}:=\widetilde{F_s}\otimes_{\widetilde{K_s}} \widetilde{K_{s,P}};$
    \item $\T(K_{s,v}):= \left\{\sum_{i=0}^\infty a_iz^i \in K_{s,v}[[z]] : v(a_i)\rightarrow \infty\right\},\quad v=v_\infty,v_P\,;$
    \item $ \T(F_{s,v}):= \left\{\sum_{i=0}^\infty a_iz^i \in F_{s,v}[[z]] : v(a_i)\rightarrow \infty\right\},\quad v=v_\infty,v_P\,;$
    \item for $v=v_\infty,v_P$, we extend the valuation $v$ from $\widetilde{K_v}$ to $\widetilde{K_{s,v}}$ by means of the Gauss valuation on the variables $t_1,\ldots,t_s$.
\end{itemize}

 For each $i=1,\ldots,s$, let $\chi_{t_i}:A\rightarrow \Fq[t_1,\ldots,t_s]$ be the $\F_q$-algebra morphism defined by $\chi_{t_i}(\theta)=t_i.$ Consider the following versions of Pellarin's $L$-series:
$$L(\chi_{t_1},\ldots,\chi_{t_s},n,z):=\sum_{d\geq 0}\sum_{a\in A_{+,d} }\frac{\chi_{t_1}(a)\ldots\chi_{t_s}(a)}{a^n}z^d \in \T(\Ksi),$$
$$L_P(\chi_{t_1},\ldots,\chi_{t_s},n,z):=\sum_{d\geq 0}\sum_{\substack{a\in A_{+,d} \\ P\nmid a}}\frac{\chi_{t_1}(a)\ldots\chi_{t_s}(a)}{a^n} z^d \in \T(K_{s,P}),$$
$$L(\chi_{t_1},\ldots,\chi_{t_s},n):=L(\chi_{t_1},\ldots,\chi_{t_s},n,1),$$
$$L_P(\chi_{t_1},\ldots,\chi_{t_s},n):=L_P(\chi_{t_1},\ldots,\chi_{t_s},n,1).$$
We refer to \cite[Section 6]{lucas_P-adic_2026} for more details about the convergence of the sums. Demeslay showed in \cite[Section 4.2]{demeslay_class_2022} that these $L$-series are in fact the $L$-series of a certain multivariable deformation of the tensor power of the Carlitz module.

An Anderson $A_s$-module $E/\OFs$ of dimension $n$ is a morphism of $\F_q(\underline{t})$-algebras
\begin{align*}
    E  : A_s&\rightarrow \Mat_{n\times n}(\OFs)\{\tau\} \\
    \theta & \mapsto E_\theta = \partial_E(\theta) + \sum_{i=1}^{D} E_{i}\tau^i
\end{align*}
such that $(\partial_E(\theta)-\theta\I_n)^n=0$. Its $z$-deformation is the morphism of $\F_q(\underline{t},z)$-algebras
\begin{align*}
    \widetilde{E}  : \widetilde{A_s} &\rightarrow \Mat_{n\times n}(\widetilde{\OFs})\{\tau\} \\
\theta  &\mapsto \widetilde{E}_\theta = \partial_E(\theta) + \sum_{i=1}^{D} E_{i}\tau^iz^i.
\end{align*}
The exponential and logarithm are defined in the usual way. Set
$$U(E/\OFs):=\{x\in \LieE(F_{s,\infty}) : \exp_E(x)\in E(\OFs)\},$$
$$U(\widetilde{E}/\widetilde{\OFs}):=\{x\in \Lie_E(\widetilde{F_{s,\infty}}):\expEt(x)\in \widetilde{E}(\widetilde{\OFs})\}.$$

The results of Section \ref{sec : ord van L_P} still hold in this setting. The proofs are completely analogous. As before, denote by $L$ the perfection of $F$, and let $L_s=L(\underline{t})$. We say that a module $E/\OFs$ is $A_s$-finite of rank $r$ if the corresponding dual motif $N_E(L_s)$ is a free $L_s[t]$-module of rank $r$.

\begin{theorem}\label{th : class formula mod K tilde with s}
    Let $E/\OF$ be an $A_s$-finite Anderson module of rank $r$ such that $\sigma  N_E(L_s)\subseteq (t-\theta)N_E(L_s)$. Then there exists a sub-$\widetilde{K_{s,\infty}}$-vector space $\widetilde{W}_s$ of $\LieEt(\widetilde{\Fsi})$ of dimension $rm$ such that 
    \begin{enumerate}
        \item $U(\widetilde{E}/\widetilde{O_{F,s}})\cap \widetilde{W}_s$ and $\LieEt(\widetilde{O_{F,s}})\cap \widetilde{W}_s$ are $\widetilde{A_s}$-lattices in $\widetilde{W}_s.$
        \item We have $$L(\widetilde{E}/\widetilde{O_{F,s}}) = [\LieEt(\widetilde{O_{F,s}})\cap \widetilde{W}_s : U(\widetilde{E}/\widetilde{O_{F,s}})\cap \widetilde{W}_s]_{\widetilde{A_s}}\cdot \widetilde{\alpha}$$
        for some $\alpha \in \widetilde{K_s}^*$.
    \end{enumerate}
    Moreover, $W_s:=\left(\widetilde{W}_s \,\cap\, \LieEt(\T(F_{s,\infty}))\right)|_{z=1}$ is a sub-$K_{s,\infty}$-vector space  of $\LieE(\Fsi)$ of dimension $rm$ such that 
    \begin{enumerate}
        \item $U(E/O_{F,s})\cap W_s$ and $\LieE(O_{F,s})\cap W_s$ are $A_s$-lattices in $W_s.$
        \item We have $$L(E/O_{F,s}) = [\LieE(O_{F,s})\cap W_s : U(E/O_{F,s})\cap W_s]_{A_s}\cdot \alpha$$
        for some $\alpha \in K_s^*$.
    \end{enumerate}
\end{theorem}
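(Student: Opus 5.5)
The plan is to transport the proof of Theorem \ref{th : class formula mod K tilde} (and, underneath it, the reduced class formula of \cite{angles_special_2020}) to the base field $\F_q(\underline{t})$ in place of $\F_q$. The constructions there use only three inputs: the dual $t$-motive, the maps $\varphi_j$ of Definition \ref{def : inverse Frobenius}, and the Fang--Demeslay class formula. Each of these has an analogue over $\F_q(\underline{t})$. Demeslay's class formula is proved in \cite{demeslay_class_2022} for Anderson modules over $A_s$, with $\F_q(\underline{t})$ as the constant field. Twisting by $z$ simply enlarges the constant field further to $\F_q(\underline{t},z)$. So the first step is to record that the twisted module $\widetilde{E}$ is an Anderson $\widetilde{A_s}$-module over the constant field $\F_q(\underline{t},z)$. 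Its class formula then reads
$$L(\widetilde{E}/\widetilde{\OFs}) = [\LieEt(\widetilde{\OFs}):U(\widetilde{E}/\widetilde{\OFs})]_{\widetilde{A_s}}\,[H(\widetilde{E}/\widetilde{\OFs})]_{\widetilde{A_s}}.$$

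Next, I would define $\widetilde{\varphi}_j$ on $N_{\widetilde{E}}(\widetilde{L_s})$ by the rules i)--iii) of Definition \ref{def : inverse Frobenius} together with $\widetilde{\varphi}_j(h)=z^jh^{(j)}P_j^T$. Here $\cdot^{(j)}$ raises coefficients in $L$ to the $q^j$-th power and fixes $t_1,\ldots,t_s,z$. Choosing an $\widetilde{L_s}[t]$-basis $(v_j)$ of $N_{\widetilde{E}}(\widetilde{L_s})$, I would set $\widetilde{W}_s$ to be the sum of the lines $\partial_E(\widetilde{K_{s,\infty}})x_l^{q^k}\widetilde{\varphi}_k(v_j)$, handling the inseparable case as in Definition \ref{def : W}. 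The argument of \cite{angles_special_2020} then goes through verbatim, since it only uses the following ingredients:
\begin{enumerate}
\item The hypothesis $\sigma N\subseteq (t-\theta)N$. This forces the $\widetilde{\varphi}_k(v_j)$ to stabilize and makes $\exp_{\widetilde{E}}$ map $\widetilde{W}_s$ into a space where the logarithm converges.
\item The injectivity of $\iota$-type projections on $\widetilde{W}_s$.
\item Independence of $k$ for $k\gg0$.
\end{enumerate}
None of these uses finiteness of the constant field. The one place where finiteness enters is the finiteness of the class module $H$. Over $\F_q(\underline{t})$, Demeslay shows that $H$ is a finitely generated torsion $A_s$-module, which is all that is needed. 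This yields points 1 and 2 for $\widetilde{W}_s$, with $\widetilde{\alpha}\in\widetilde{K_s}^*$ absorbing $[H]$ and the change-of-basis factors. The same argument without $z$ gives a space $W'_s$ satisfying the second pair of assertions.

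The remaining task is to identify $W_s:=(\widetilde{W}_s\cap\LieEt(\T(F_{s,\infty})))|_{z=1}$ with $W'_s$, and to check that it has dimension $rm$. The approach is to choose the basis with $v_j|_{z=1}=\nu_j$. Then $\widetilde{\varphi}_k(v_j)|_{z=1}=\varphi_k(\nu_j)$, so evaluation at $z=1$ sends the spanning vectors of $\widetilde{W}_s$ to those of $W'_s$, and these are $K_{s,\infty}$-independent. For the lattices, I would repeat Lemma \ref{lem : ev(U cap W) is lattice}: clear denominators and divide out powers of $(z-1)$ to get a basis in $U(\widetilde{E}/\OFs[z])$. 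The same divisibility argument shows that the specialized vectors remain independent. Finally, Proposition \ref{prop : class formula mod K integral level} shows that neither the determinant nor $\widetilde{\alpha}$ has a zero or pole at $z=1$. Specializing $z=1$ then gives the untwisted formula with $\alpha=\widetilde{\alpha}|_{z=1}\in K_s^*$.

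I expect the main obstacle to be the convergence bookkeeping in $\T(K_{s,\infty})$. One has to justify that evaluation at $z=1$ commutes with the relevant determinants and exponentials. It is also not automatic that the Gauss valuation in $\underline{t}$ preserves the convergence estimates of \cite{angles_special_2020} for $\log_{\widetilde{E}}$ on $\exp_{\widetilde{E}}(\widetilde{W}_s)$. I would first try to show that all coefficients of $\exp_{\widetilde{E}}$ and $\log_{\widetilde{E}}$ lie in $\Mat_{n\times n}(F[\underline{t}])$. The estimates would then be uniform in $\underline{t}$, and the case $s=0$ would transfer directly.
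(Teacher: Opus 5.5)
Your proposal is correct and follows the paper's route. The paper's proof is simply ``same as Theorem \ref{th : class formula mod K tilde}'', i.e.\ rerun the argument of \cite{angles_special_2020} with $\F_q(\underline{t})$ (resp.\ $\F_q(\underline{t},z)$) as constant field and identify $W_s$ with the untwisted space via $v_j|_{z=1}=\nu_j$, which is exactly what you do; note only that in your last paragraph the coefficients of $\exp_{\widetilde{E}}$ and $\log_{\widetilde{E}}$ need not lie in $\Mat_{n\times n}(F[\underline{t}])$ when $E$ has denominators in $\F_q[\underline{t}]$, but this is unnecessary, since the Gauss valuation is trivial on $\F_q(\underline{t})^*$ and the $s=0$ estimates transfer verbatim.
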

\begin{proof}
    Same as Theorem \ref{th : class formula mod K tilde}.
\end{proof}

\begin{theorem}\label{th : P-adic formula W with ts}
    Let $E/\OF$ be an $A_s$-finite Anderson module such that $\sigma  N_E(L_s)\subseteq (t-\theta)N_E(L_s)$. We have the following equality over $\widetilde{K_{s,P}}$:
    $$L_P(\widetilde{E}/\widetilde{\OFs})= [\LieEt(\widetilde{\OFs})\cap \widetilde{W}_s : U(\widetilde{E}/\widetilde{\OFs})\cap \widetilde{W}_s]_{\widetilde{A_s},P} \cdot \widetilde{\alpha}$$
    for some $\alpha\in \widetilde{K_s}^*$. Over $K_{s,P}$, we have
    $$L_P(E/\OFs)= [\LieE(\OFs)\cap W_s : U(E/\OFs)\cap W_s]_{A_s,P}\cdot \alpha$$
    for some $\alpha\in K_s^*$.
\end{theorem}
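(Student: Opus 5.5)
The proof transfers the argument for Theorem \ref{th : P-adic formula W} to coefficients in $\F_q(\underline{t})$, with Theorem \ref{th : class formula mod K tilde with s} as the starting point. Every earlier construction only uses $\F_q$-linear structure and completeness of the coefficient field in the relevant valuations. We therefore work throughout over $\F_q(\underline{t})$, with the Gauss valuation in $t_1,\ldots,t_s$, and note that nothing changes formally.

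\textbf{Step 1: integral bases.} First we prove the analogue of Lemma \ref{lem : ev(U cap W) is lattice}. Since $U(\widetilde{E}/\widetilde{\OFs})=\F_q(\underline{t},z)\,U(\widetilde{E}/\OFs[z])$, by the same argument as \cite[Proposition 1]{angles_arithmetic_2021} with $\F_q$ replaced by $\F_q(\underline{t})$, we can choose $\widetilde{A_s}$-bases $(\widetilde{u_i})$ of $U\cap\widetilde{W}_s$ and $(\widetilde{b_i})$ of $\LieEt(\widetilde{\OFs})\cap\widetilde{W}_s$. We take them with entries in $\OFs[z]$ and not divisible by $z-1$. The linear independence argument at $z=1$ is verbatim: a relation $\sum\partial_E(a_i)u_i=0$ with $a_i\in K_{s,\infty}$ forces $(z-1)\mid a_i$, hence $a_i=0$.

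\textbf{Step 2: the $\infty$-adic integral formula.} Next we repeat Proposition \ref{prop : class formula mod K integral level}. Take a $\partial_E(K_s)$-basis $\mathcal{B}$ of $\LieE(F_s)$ and let $\delta(z)=\det\Mat_{\mathcal{B}}(\widetilde{b_i})$, which satisfies $\delta(1)\neq0$. Then $\det(M)$ has neither a zero nor a pole at $z=1$, and so $\widetilde{\alpha}\in\widetilde{K_s}^*$ is a unit at $z=1$.

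\textbf{Step 3: the $P$-adic formula.} Now we pass to $P$-adic convergence. We need the $P$-adic logarithm $\Log_{\widetilde{E},P}$ on points of nonnegative valuation in $\T(F_{s,P})^n$. It is built as before from $h(z)=g(z)^{q^{s'}}$, where $g(z)=[\widetilde{E}(\widetilde{\OFs}/P\widetilde{\OFs})]_{\widetilde{A_s}}$ and $q^{s'}\geq n$. The convergence statements from \cite[Sections 4, 6]{lucas_P-adic_2026} hold over $\F_q(\underline{t})$ with the Gauss norm, and Lucas treats exactly this multivariable setting in his Section 6. We must also check that $g(1)\neq0$, which is true because $g(1)=[E(\OFs/P\OFs)]_{A_s}$.

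With this in place, we factor in $K_s((z))$:
$$L(\widetilde{E}/\widetilde{\OFs})=L_P(\widetilde{E}/\widetilde{\OFs})\,Z_P(\widetilde{E}/\widetilde{\OFs}).$$
Formally $\widetilde{u_i}=\Log_{\widetilde{E},P}(\exp_{\widetilde{E}}(\widetilde{u_i}))$, so the formal identity from Step 2 reads
$$L_P(\widetilde{E}/\widetilde{\OFs})=\det(M_P)\cdot Z_P^{-1}\widetilde{\beta},\qquad M_P=\Mat_{(\widetilde{b_i})}\bigl(\Log_{\widetilde{E},P}(\exp_{\widetilde{E}}(\widetilde{u_i}))\bigr).$$
Since $M_P\in\frac{1}{h(z)\delta(z)}\Mat(\T_P(K_s))$, both sides are honest elements of $\widetilde{K_{s,P}}$. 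Dividing by the sign gives the first equality.

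For the second equality we evaluate at $z=1$; this is legitimate because nothing has a pole there. We then change bases from $(b_i),(u_i)$ to $A_s$-bases $(c_i),(w_i)$ of $\LieE(\OFs)\cap W_s$ and $U(E/\OFs)\cap W_s$. This absorbs $\det_{(b_i)}(c_i)$, $\det_{(w_i)}(u_i)$ and the sign into $\alpha\in K_s^*$, exactly as in the proof of Theorem \ref{th : P-adic formula W}.

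\textbf{Main obstacle.} The main difficulty is the justification in Step 3 that formal equalities in $K_s((z))$ may be read as equalities in $\widetilde{K_{s,P}}$. This requires $L_P(\widetilde{E}/\widetilde{\OFs})$, $\det(M_P)$ and $Z_P$ to lie in the Tate algebra $\T(K_{s,P})$ (up to the denominators $h\delta$), together with the identity $\Log_{\widetilde{E},P}\circ\exp_{\widetilde{E}}=\mathrm{id}$ holding coefficientwise in $z$. The plan is to argue coefficient by coefficient in $z$: each $z^k$-coefficient is a finite computation in $F_s$, so identity as formal series plus $P$-adic convergence of both sides gives equality. The only new input compared with the untwisted-in-$\underline{t}$ case is that the Gauss valuation in $\underline{t}$ is multiplicative, which we would check for the estimates of \cite{lucas_P-adic_2026}.
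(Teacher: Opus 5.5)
Your proposal is correct and follows the paper's approach. The paper proves this theorem only by saying it is the same as Theorem \ref{th : P-adic formula W}: you rerun Lemma \ref{lem : ev(U cap W) is lattice}, Propositions \ref{prop : class formula mod K integral level} and \ref{prop : P-adic class formula integral level}, and the final change of bases over $\F_q(\underline{t})$, starting from Theorem \ref{th : class formula mod K tilde with s}, which is exactly your Steps 1--3. One small slip: the unit basis vectors $\widetilde{u_i}$ should be taken in $U(\widetilde{E}/\OFs[z])$, that is, with entries in $\T(F_{s,\infty})$ and $\exp_{\widetilde{E}}(\widetilde{u_i})\in\OFs[z]^n$, not with entries in $\OFs[z]$ as Step 1 says; the former is what you actually use in Step 3.
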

\begin{proof}
    Same as Theorem \ref{th : P-adic formula W}.
\end{proof}

\begin{theorem}\label{th : vanishing LP ts}
    Let $E/A_s$ be an $A_s$-finite Anderson module of rank $1$ such that $\sigma  N_E(L_s)\subseteq (t-\theta)N_E(L_s)$. The following statements are true. 
    \begin{enumerate}
        \item $L_P(E/A_s)=0$
    if and only if
    $\exp_E:\LieE(K_{s,\infty})\rightarrow E(K_{s,\infty})$
    is not injective.
        \item The order of vanishing of $L_P(\widetilde{E}/\widetilde{A_s})$ at $z=1$ does not depend on $P$.
    \end{enumerate}
\end{theorem}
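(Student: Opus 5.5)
The plan is to transport, word for word, the arguments of Corollary \ref{cor : vanishing P-adic L}, Proposition \ref{prop : derivative P-adic zero iff torsion} and Theorem \ref{th : vanishing does not depend P} to the base ring $A_s$. The constants are now $\F_q(\underline{t})$ instead of $\F_q$. The inputs are Theorem \ref{th : P-adic formula W with ts}, the $A_s$-analogue of Lemma \ref{lem : ev(U cap W) is lattice} and Proposition \ref{prop : P-adic class formula integral level}, and an $A_s$-analogue of Proposition \ref{prop : kernel Log torsion points}. The last one says that for $x\in A_s^n$, $\Log_{E,P}(x)=0$ if and only if $x$ is torsion in $E(A_s)$. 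I would first check that Lucas's construction of $\Log_{\widetilde E,P}$ goes through over $\F_{q^{\deg P}}(\underline t)((P))$. Here $g(z)=[\widetilde E(\widetilde{A_s}/P\widetilde{A_s})]_{\widetilde{A_s}}$ lies in $\F_q(\underline t)[\theta][z]$, $g(1)\neq 0$, and one again uses $h=g^{q^s}$. Convergence of $\log_{\widetilde E,P}$ on elements of positive $P$-adic Gauss valuation is unaffected, since the $t_i$ only enter as constants with Gauss valuation $0$.

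\textbf{Step 1, part 1.} The direction ``$\exp_E$ not injective $\Rightarrow L_P(E/A_s)=0$'' follows Lucas's argument via the general $P$-adic class formula, which holds over $A_s$ (cf. \cite[Section 6]{lucas_P-adic_2026}). Alternatively it can be read off from Theorem \ref{th : P-adic formula W with ts}. Take $0\neq x$ in the kernel; since $\exp_E(x)=0\in E(A_s)$, $x$ lies in $U(E/A_s)$. Since the rank is $1$, I would then argue that $x\in W_s$ (this is the one place requiring care; alternatively cite Lucas). Then $\partial_E(a)x=\partial_E(c)u$ for a generator $u$ of $U(E/A_s)\cap W_s$, and $\Log_{E,P}(\exp_E(u))$ is killed by $\partial_E(c)$ up to a nonzero scalar, hence vanishes. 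For the converse, suppose $L_P(E/A_s)=0$. Write $U(E/A_s)\cap W_s=\partial_E(A_s)u$. The $1\times1$ determinant of Theorem \ref{th : P-adic formula W with ts} forces $\Log_{E,P}(\exp_E(u))=0$. Hence $\exp_E(u)$ is torsion, so $\exp_E(\partial_E(a)u)=0$ with $\partial_E(a)u\neq 0$.

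\textbf{Step 2, part 2.} Choose $\widetilde w\in U(\widetilde E/A_s[z])\cap\widetilde W_s$ with $w=\widetilde w|_{z=1}\neq0$, and $\widetilde b$ generating $\LieEt(\widetilde{A_s})\cap\widetilde W_s$. Write $\Log_{\widetilde E,P}(\widetilde E_a(\exp_{\widetilde E}(\widetilde w)))=\partial_E(\widetilde g)\widetilde b$, so that $L_P(\widetilde E/\widetilde{A_s})=\widetilde g\widetilde\alpha$ with $\widetilde\alpha\in\widetilde{K_s}$ having neither zero nor pole at $z=1$. Then prove by induction on $k$ the following. If $L_P^{(j)}(E/A_s)=0$ for all $j<k$, there is a nonzero $a_k\in A_s$ such that $L_P^{(k)}(E/A_s)=0$ if and only if $\bigl(\D_z^{(k)}\widetilde E_{a_k}(\exp_{\widetilde E}(\widetilde w))\bigr)|_{z=1}$ is torsion in $E(A_s)$. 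The inductive step uses the Leibniz rule and Lemma \ref{lem : d(g)'=0 iff g'=0}, valid verbatim over $K_{s,P}$. Expanding $\D_z^{(k)}$ of $\Log_{\widetilde E,P}\circ\widetilde E_{a_k}\circ\exp_{\widetilde E}(\widetilde w)$, every cross term evaluates at $z=1$ to $(\D_z^{(i)}\Log)$ applied to an element that is $0$ by the torsion choices made at lower levels. Here $a_k=b_{k-1}a_{k-1}$ with $b_{k-1}$ killing the previous torsion point, and one uses $\widetilde E_{b}\circ\widetilde E_{a}=\widetilde E_{ba}$ with the product rule for $\D_z$. The torsion condition involves only $E$, $\widetilde w$ and the $a_j$, not $P$. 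Since the $a_j$ can be chosen independently of $P$, the order of vanishing is independent of $P$.

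\textbf{Main obstacle.} I expect the main obstacle to be the bookkeeping for general $k$ in Step 2. Concretely, one must verify that the cross terms vanish at $z=1$, i.e. that each lower-order derivative of $\widetilde E_{a_k}(\exp_{\widetilde E}(\widetilde w))$ evaluates to $0$ at $z=1$ and not merely to a torsion point. I would handle this by choosing $a_k$ as the product of all previous annihilators, so that every lower-order derivative at $z=1$ is $E_{c}$ applied to a torsion point killed by $c$, and hence is $0$.
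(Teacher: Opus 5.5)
Your proposal is correct and follows the paper's route: the paper's proof simply states that the arguments of Corollary~\ref{cor : vanishing P-adic L} (citing Lucas for the direction ``$\exp_E$ not injective $\Rightarrow L_P=0$'') and of Theorem~\ref{th : vanishing does not depend P} carry over verbatim to $A_s$. Your induction with $a_k=b_{k-1}a_{k-1}$ correctly supplies the general-$k$ bookkeeping that the paper writes out only for $k=2$, and your alternative argument requiring $x\in W_s$ is not needed once you rely on the Lucas citation, as the paper does.
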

\begin{proof}
    Identic to the proofs of Corollary \ref{cor : vanishing P-adic L} and Theorem \ref{th : vanishing does not depend P}.
\end{proof}

For any nonzero $\alpha\in A_s$, consider the Anderson module 
$$E_\alpha:A_s\rightarrow \Mat_{n\times n}(A_s),\qquad (E_\alpha)_\theta=\partial(\theta)+N_\alpha\tau,$$
where
$$\partial(\theta)=\begin{pmatrix}
    \theta &1 & 0 & \ldots & 0 \\
    0 & \theta & 1 & \ldots & 0 \\
    \vdots&&\vdots&&\vdots \\
    0 & \ldots & 0 &\theta &1 \\
    0 &\ldots &\ldots &0& \theta
\end{pmatrix}, \qquad N_\alpha = \begin{pmatrix}
    0 &\ldots&\ldots &0\\
    \vdots&&&\vdots\\
    0 &\ldots&\ldots &0\\
    \alpha & 0 & \ldots & 0
\end{pmatrix}.$$

We extend our module $\Cn$ $\F_q(\underline{t})$-linearly, so that  $E_1=\Cn$. In the same way as for $\Cn$, one can check that $N_{E_\alpha}(L_s)=L_s[t]e_n^T$, and so $E_\alpha$ is $A_s$-finite of rank $1$. We also clearly have $\sigma N_E(L_s)\subseteq (t-\theta)N_E(L_s)$. Thus, the module $E_{\alpha}$ verifies the conditions of the preceding theorem.

In the following, we will consider 
$$\alpha:=(t_1-\theta)\ldots(t_s-\theta).$$
As shown in \cite[Section 4.2]{demeslay_class_2022}, the $L$-series attached to $E_\alpha$ are exactly the Pellarin $L$-series: 
    $$L(E_\alpha/A_s)=\prod_{P}\frac{[\Lie_{E_\alpha}(A_s/PA_s)]_{A_s}}{[E_\alpha(A_s/PA_s)]_{A_s}} = L(\chi_{t_1},\ldots,\chi_{t_s},n),$$
    $$L_P(E_\alpha/A_s)=\prod_{Q \neq P}\frac{[\Lie_{E_\alpha}(A_s/QA_s)]_{A_s}}{[E_\alpha(A_s/QA_s)]_{A_s}} = L_P(\chi_{t_1},\ldots,\chi_{t_s},n),$$
and the same is true with a twist by $z$.                 

Let
$$\omega_\alpha := \sqrt[q-1]{(-\theta)^s}\prod_{i\geq 0} \tau^i\left(\frac{(-\theta)^s}{(t_1-\theta)\ldots(t_s-\theta)}\right).$$
Note that $v_\infty(\tau^i(1-(-\theta)^s/\alpha))\rightarrow\infty$, so the infinite product converges. One can check that
$$\tau(\omega_\alpha)=\alpha\omega_\alpha.$$
\begin{prop} \label{prop : properties E_a}
We have the following properties about $E_\alpha$.
    \begin{enumerate}
        \item $\Cn_a \omega_\alpha = \omega_\alpha (E_\alpha)_a$ for all $a\in A_s$.
        \item $\exp_{E_\alpha} = \omega_\alpha^{-1} \exp_{\Cn}\omega_\alpha$.
        \item $\ker \exp_{E_\alpha}$ is a free $A_s$-module of rank $1$, generated by $\pi_n / \omega_\alpha$, whose last component is $\widetilde{\pi}^n/\omega_\alpha$.
        \item Over $K_{s,\infty}$, the exponential 
        $$\exp_{E_\alpha}:\Lie_{E_\alpha}(K_{s,\infty})\rightarrow E_\alpha(K_{s,\infty})$$ 
        is not injective if and only if 
        $$n\equiv s \pmod{q-1}.$$
    \end{enumerate}
\end{prop}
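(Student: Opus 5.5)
Everything rests on the single identity $\tau(\omega_\alpha)=\alpha\omega_\alpha$, together with the fact that $\omega_\alpha$ is a scalar commuting with $\partial(a)$ and with the variables $t_i$. I prove the four points in order, each feeding the next.

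\emph{Point 1.} Since both sides are $\F_q(\underline{t})$-algebra morphisms in $a$, it suffices to take $a=\theta$. Then
$$\Cn_\theta\,\omega_\alpha=\partial(\theta)\omega_\alpha+N\tau\omega_\alpha=\omega_\alpha\partial(\theta)+N\tau(\omega_\alpha)\tau=\omega_\alpha\partial(\theta)+\alpha\omega_\alpha N\tau=\omega_\alpha\left(\partial(\theta)+N_\alpha\tau\right),$$
using $\tau\omega_\alpha=\tau(\omega_\alpha)\tau$ in the twisted polynomial ring and $\alpha N=N_\alpha$.

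\emph{Point 2.} Consider the series $\omega_\alpha^{-1}\exp_{\Cn}\omega_\alpha$. It lies in $\I_n+\tau\Mat_{n\times n}\{\{\tau\}\}$, and by point 1 it satisfies
$$\omega_\alpha^{-1}\exp_{\Cn}\omega_\alpha\,\partial(a)=\omega_\alpha^{-1}\Cn_a\exp_{\Cn}\omega_\alpha=(E_\alpha)_a\,\omega_\alpha^{-1}\exp_{\Cn}\omega_\alpha .$$
The exponential is the unique such series, so this equals $\exp_{E_\alpha}$. The coefficients are $Q_k\,\omega_\alpha^{q^k-1}=Q_k\prod_{j<k}\tau^j(\alpha)\in K_s$. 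For convergence I would note that the map $x\mapsto\omega_\alpha x$ identifies the relevant spaces over the completion of $K_{s,\infty}(\omega_\alpha)$, since $\omega_\alpha$ is a unit of Gauss norm $1$ up to a $(q-1)$-th root of $\theta^s$.

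\emph{Point 3.} By point 2, $\exp_{E_\alpha}(x)=0$ if and only if $\exp_{\Cn}(\omega_\alpha x)=0$, that is, $\omega_\alpha x\in\ker\exp_{\Cn}$. The kernel of $\exp_{\Cn}$ should be $\partial(A_s)\pi_n$; this is its $\F_q(\underline{t})$-linear extension, and it requires a short argument that the kernel does not grow after tensoring with $\F_q(\underline{t})$, for instance via the lattice statement over $\C_\infty$ with coefficients in $\F_q(\underline{t})$. Hence $\ker\exp_{E_\alpha}=\partial(A_s)\pi_n/\omega_\alpha$, and its last component is $\widetilde{\pi}^n/\omega_\alpha$ because $\omega_\alpha$ is a scalar.

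\emph{Point 4, the main step.} The exponential is non-injective on $\Lie_{E_\alpha}(K_{s,\infty})$ if and only if some nonzero $\partial(a)\pi_n/\omega_\alpha$ has all coordinates in $K_{s,\infty}$.

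First suppose $n\equiv s\pmod{q-1}$. I would show that $\widetilde{\pi}^n/\omega_\alpha\in K_{s,\infty}$. The $(q-1)$-th root factors are $\sqrt[q-1]{-\theta}^{\,n}$ and $\sqrt[q-1]{(-\theta)^s}^{-1}$, whose ratio is $(-\theta)^{(n-s)/(q-1)}$ up to $\F_q^\times$; the infinite products already lie in $K_{s,\infty}$. For the other coordinates of $\pi_n$, the components $\pi_n/\widetilde{\pi}^n$ are expected to lie in $K_\infty$. This holds when $q-1\mid n$, because then $\pi_n\in\Lie(K_\infty)$. For general $n$, I would use that $\pi_n=\widetilde{\pi}^n\cdot v$ with $v\in K_\infty^n$, which is visible from Anderson–Thakur's description of $\pi_n$ through $\partial$ applied to the series $\widetilde{\pi}$.

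Conversely, suppose $\partial(a)\pi_n/\omega_\alpha\in K_{s,\infty}^n$ for some nonzero $a$. Taking the last coordinate gives $a\widetilde{\pi}^n/\omega_\alpha\in K_{s,\infty}$. Applying the generator of $\mathrm{Gal}$ acting on $(q-1)$-th roots multiplies this quantity by $\zeta^{\,n-s}$, so $\zeta^{\,n-s}=1$ for all $\zeta\in\F_q^\times$, and hence $q-1\mid n-s$.

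I expect the main obstacle to be the rationality of the full vector $\pi_n/\widetilde{\pi}^n$ in the forward direction. If that rationality resists, I would fall back to checking only the last coordinate and then invoke the fact that, in rank $1$, $\iota$ is injective on the period lattice.
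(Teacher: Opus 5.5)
Your proposal is correct and takes the paper's route. Points 1--3 are the direct computations from $\tau(\omega_\alpha)=\alpha\omega_\alpha$ that the paper delegates to Demeslay, and point 4 is reduced, as in the paper, to whether $\sqrt[q-1]{(-\theta)^{n-s}}$ lies in $K_{s,\infty}$. That reduction uses $\pi_n/\widetilde{\pi}^n\in K_\infty^n$, which the paper leaves implicit and which does follow from the Anderson--Thakur description of $\pi_n$, so your fallback is not needed.

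The only slip is notational: in point 2 the coefficient should be written $Q_k\,\omega_\alpha^{-1}\tau^k(\omega_\alpha)=Q_k\prod_{j<k}\tau^j(\alpha)$ rather than $Q_k\,\omega_\alpha^{q^k-1}$, since $\tau$ fixes the $t_i$.
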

\begin{proof}
    The first three points are direct computations (\cite[Proposition 4.7]{demeslay_class_2022}). For the last point, note that $\pi_n/\omega_\alpha\in \Lie_{E_\alpha}(K_{s,\infty})$ if and only if $\sqrt[q-1]{(-\theta)^{n-s}}\in K_{\infty}$.
\end{proof}
\begin{cor} \label{cor : vanishing LP t1 ts}
    $L_P(E_\alpha/A_s)=0$ if and only if $n\equiv s \pmod{q-1}$.
\end{cor}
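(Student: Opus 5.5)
The corollary follows by combining part 1 of Theorem \ref{th : vanishing LP ts} with part 4 of Proposition \ref{prop : properties E_a}. First I check that $E_\alpha$ satisfies the hypotheses of Theorem \ref{th : vanishing LP ts}. As noted just before the definition of $\omega_\alpha$, $N_{E_\alpha}(L_s)=L_s[t]e_n^T$, so $E_\alpha$ is $A_s$-finite of rank $1$, and $\sigma N_{E_\alpha}(L_s)\subseteq (t-\theta)N_{E_\alpha}(L_s)$. The coefficients of $(E_\alpha)_\theta$ lie in $A_s$, so $E_\alpha$ is defined over $A_s$ (here $F=K$, $m=1$). Theorem \ref{th : vanishing LP ts} part 1 then gives
$$L_P(E_\alpha/A_s)=0 \iff \exp_{E_\alpha}:\Lie_{E_\alpha}(K_{s,\infty})\to E_\alpha(K_{s,\infty}) \text{ is not injective}.$$

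Second, Proposition \ref{prop : properties E_a} part 4 says this non-injectivity holds exactly when $n\equiv s \pmod{q-1}$. For completeness, the argument there runs as follows. By part 3, $\ker\exp_{E_\alpha}$ over $\C_\infty$ is $\partial(A_s)\pi_n/\omega_\alpha$. Any nonzero kernel element in $\Lie_{E_\alpha}(K_{s,\infty})$ has the form $\partial(a)\pi_n/\omega_\alpha$ with $a\in A_s$ nonzero. Since $\partial(a)$ is invertible over $K_{s,\infty}$, such an element exists if and only if $\pi_n/\omega_\alpha$ itself lies in $\Lie_{E_\alpha}(K_{s,\infty})$. Up to factors in $K_{s,\infty}^*$, namely the convergent products $\prod(1-\theta^{1-q^i})^{-1}$ and $\prod\tau^i((-\theta)^s/\alpha)^{-1}$, this reduces to whether $\sqrt[q-1]{(-\theta)^{n}}/\sqrt[q-1]{(-\theta)^{s}}$ lies in $K_\infty$. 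Valuation considerations show this holds exactly when $q-1\mid n-s$.

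The only delicate point is the rationality check on the other components of $\pi_n/\omega_\alpha$, not just the last one. I would handle it as in the case $s=0$: $\pi_n$ is $\widetilde\pi^n$ times a vector with entries in $K_\infty(\widetilde\pi)$, equivalently one uses that the $\partial(K_\infty)$-line through the period is determined by its last coordinate. With that, the corollary is just the composition of the two equivalences.
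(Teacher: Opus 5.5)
Your proposal is correct and follows the paper's proof: after checking that $E_\alpha$ is $A_s$-finite of rank $1$ with $\sigma N_{E_\alpha}(L_s)\subseteq (t-\theta)N_{E_\alpha}(L_s)$, the corollary is part 1 of Theorem~\ref{th : vanishing LP ts} composed with part 4 of Proposition~\ref{prop : properties E_a}. Your re-derivation of part 4 is not needed; if you keep it, settle the ``delicate point'' about the other components by Galois descent rather than by the vague appeal to $K_\infty(\widetilde\pi)$: for $\sigma$ fixing $K_{s,\infty}$, the vector $\sigma(\pi_n/\omega_\alpha)$ again lies in $\ker\exp_{E_\alpha}=\partial(A_s)\pi_n/\omega_\alpha$ with the same last coordinate, and elements of this kernel are determined by their last coordinate because $\iota(\partial(a)x)=a\,\iota(x)$.
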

\begin{proof}
    It is a consequence of the first part of Theorem \ref{th : vanishing LP ts} and the fourth point of Proposition \ref{prop : properties E_a}.
\end{proof}

Denote by $\chi_{t_i}':A\rightarrow \F_q(\underline{t})$ the $\F_q$-algebra morphism given by $\chi_{t_i}'(\theta)=1/t_i$. Let $\chi_s:=\chi_{t_1}\ldots\chi_{t_s}$ and $\chi_s':=\chi_{t_1}'\ldots\chi_{t_s}'$.

\begin{lemma} \label{lem : L chi entire}
    For any $(\eta_1,\ldots,\eta_s)\in \overline{\F_q}^s$ and any $x\in K_{s,\infty}$, the series
    $$L(\chi_{t_1},\ldots,\chi_{t_s},n,z)|_{t_i=\eta_i,z=x}$$
    converges. The same is then true for $L(\chi'_{t_1},\ldots,\chi'_{t_s},n,z)$.
\end{lemma}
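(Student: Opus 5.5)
The plan is to write the series degree by degree and prove that the degree-$d$ block decays super-exponentially in $d$, uniformly for $|t_i|\le 1$. Set
$$S_d(\underline{t}) := \sum_{a\in A_{+,d}} \frac{\chi_{t_1}(a)\cdots\chi_{t_s}(a)}{a^n}\in K_{s,\infty},$$
so that $L(\chi_{t_1},\ldots,\chi_{t_s},n,z)=\sum_{d\geq 0} S_d z^d$. After specializing $t_i=\eta_i$ and $z=x$, convergence in $\C_\infty$ (or in $\overline{\F_q}K_{s,\infty}$) amounts to showing that $v_\infty(S_d|_{t_i=\eta_i}) + d\,v_\infty(x)\to\infty$ for every fixed $x$. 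A bound of the form $v_\infty(S_d)\geq n d$ is not enough, because $v_\infty(x)$ may be arbitrarily negative. What I need is that $v_\infty(S_d)$ grows faster than linearly in $d$, for instance like $q^{d/s'}$ for a constant $s'$ depending only on $s$. This is the standard ``entireness'' phenomenon for Goss/Pellarin $L$-functions.

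\textbf{Key step: the vanishing of power sums.} Expand $a^{-n}$ for $a=\theta^d+\sum_{j<d} a_j\theta^j$ as a $1/\theta$-adic series:
$$a^{-n}=\theta^{-nd}\,(1+u)^{-n}=\theta^{-nd}\sum_{k\geq 0}\binom{-n}{k}u^k,\qquad u=\sum_{j<d}a_j\theta^{j-d}.$$
The product $\chi_{t_1}(a)\cdots\chi_{t_s}(a)$ is a polynomial in $(a_0,\ldots,a_{d-1})$ of degree at most $s$ in each $a_j$. Hence the coefficient of $\theta^{-nd-m}$ in $S_d$ is a sum, over $(a_j)\in\F_q^d$, of a polynomial in the $a_j$ with coefficients in $\F_q[\underline{t}]$. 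I will use the classical fact that $\sum_{c\in\F_q} c^e=0$ unless $e>0$ and $(q-1)\mid e$. A monomial $\prod_j a_j^{e_j}$ therefore contributes only if each $e_j\geq 1$ with $e_j\geq q-1$, or $e_j=0$ contributes $q=0$. So every variable $a_j$ must appear with exponent at least $q-1$. Any individual monomial has its $a_j$-exponent split as $s_j+k_j$, with $s_j\le s$ coming from the character and $k_j$ coming from $u^k$.

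\textbf{Main obstacle: converting this into a super-linear bound.} This is where I expect the real work to lie. The character supplies at most $s$ per variable, so the remaining exponent $\geq q-1-s$ must come from $u^k$; when $s\geq q-1$, however, this crude count gives nothing. To get around this, I will instead use Lucas/Sheats-style digit arguments, carried out as in the classical proof that Goss zeta is entire. The argument uses the multinomial expansion of $(1+u)^{-n}$ in characteristic $p$: write $-n$ in a $p$-adic expansion, so that $(1+u)^{-n}=\prod_i(1+u^{p^i})^{c_i}$. A variable $a_j$ raised from $u^{p^i}$ carries weight $p^i(d-j)$ in $\theta$-degree. Requiring all $d$ variables $a_0,\ldots,a_{d-1}$ to be hit, with at most $s$ of them covered ``cheaply'' per digit, forces the total $\theta$-degree $m$ to be large. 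Concretely, I aim for $m\gg q^{\lfloor d/(s+1)\rfloor}$, up to constants. In the final writeup I would rely on the known result that for fixed $s,n$ one has $S_d=0$ or $-v_\infty(S_d)\le -c\,q^{d/(s+1)}$. A convenient alternative is to cite Goss's entireness theorem for $L(n,\chi)$ (\cite[Chapter 8]{goss_basic_1996}) specialized at the $\eta_i$, or the Tate-algebra entireness of Pellarin series in \cite[Section 6]{lucas_P-adic_2026}.

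\textbf{Conclusion.} Once the bound $v_\infty(S_d)\ge c\,q^{d/(s+1)}$ is in place, it holds uniformly under any specialization $t_i=\eta_i\in\overline{\F_q}$, since this only multiplies the $\F_q[\underline t]$-coefficients by units or zero and cannot decrease valuation. The general term then satisfies $v_\infty(S_d x^d)\geq c\,q^{d/(s+1)}+d\,v_\infty(x)\to\infty$, so the series converges. For $\chi'$, note that $\chi'_{t_i}(a)=t_i^{-d}\chi_{t_i^{-1}}\ldots$. More precisely, $\chi'_{t_1}\cdots\chi'_{t_s}(a)$ is obtained from $\chi_s(a)$ by the substitution $t_i\mapsto 1/t_i$, so the specialization at $\eta_i$ becomes the specialization of $L(\chi_{t_1},\ldots,\chi_{t_s},n,z)$ at $1/\eta_i$. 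If some $\eta_i=0$, I would instead use the same power-sum bound directly, which only depends on the degree of the character polynomial in the $a_j$.
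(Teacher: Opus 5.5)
The paper gives no argument of its own here; it simply cites \cite[Proposition 6]{angles_functional_2014}. Your proposal also ends by deferring the decisive estimate to a citation, so as written it is on a similar footing. Two things differ, however. The specific bound $v_\infty(S_d)\geq c\,q^{d/(s+1)}$ that you call ``known'' comes with no source. And your self-contained sketch stops exactly at the step you flag as the obstacle.

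That step is not actually an obstacle; it comes from a miscount. The factor
\[
\chi_{t_1}(a)\cdots\chi_{t_s}(a)=\prod_{i=1}^s\bigl(t_i^d+\textstyle\sum_{j<d}a_jt_i^j\bigr)
\]
is a product of $s$ affine-linear forms in $(a_0,\ldots,a_{d-1})$. Hence it has \emph{total} degree at most $s$, not merely degree at most $s$ in each $a_j$. So in any monomial $\prod_j a_j^{e'_j}$ coming from the character, at least $d-s$ indices $j$ have $e'_j=0$. For those indices, power-sum vanishing forces the exponent $k_j$ coming from $u^k$ to be a positive multiple of $q-1$. The variable $a_j$ carries $\theta^{-1}$-weight $d-j$, and these weights are distinct integers in $\{1,\ldots,d\}$. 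Therefore the coefficient of $\theta^{-nd-m}$ in $S_d$ vanishes unless $m=\sum_j k_j(d-j)\geq (q-1)(1+2+\cdots+(d-s))$. In other words,
\[
v_\infty(S_d)\;\geq\; nd+(q-1)\binom{d-s+1}{2}\qquad (d>s),
\]
and this already holds in $\F_q[\underline{t}]((1/\theta))$.

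This quadratic growth is all you need: $v_\infty(S_d x^d)\to\infty$ for every nonzero $x$, with no digit analysis of $(1+u)^{-n}$ and no $q^{d/(s+1)}$ bound. Because the bound holds before specialization, it gives both the statement at $t_i=\eta_i$ and the version with the $t_i$ left as indeterminates in $K_{s,\infty}$. The latter is what the proof of Theorem~\ref{th : order vanishing t1 ts} actually uses, with $x=\chi_s(\theta)\theta^n$ of valuation $-n$. A citation of Goss's theorem for Dirichlet $L$-series would only address the specialized case.

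For $\chi'$ the same count applies verbatim, since $\chi'_{t_i}(a)=a(1/t_i)$ is again affine-linear in the $a_j$. Your separate remark about $\eta_i=0$ is moot, because $\chi'$ cannot be specialized there.
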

\begin{proof}
    See \cite[Proposition 6]{angles_functional_2014}.
\end{proof}

\begin{theorem} \label{th : order vanishing t1 ts}
    If $n\equiv s \pmod{q-1}$, then 
        $$\ord_{z=1}L_P(\chi_{t_1},\ldots,\chi_{t_s},n,z) = 1.$$
\end{theorem}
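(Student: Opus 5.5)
By the second part of Theorem \ref{th : vanishing LP ts}, applied to $E_\alpha$ with $\alpha=(t_1-\theta)\cdots(t_s-\theta)$, the order of vanishing of $L_P(\chi_{t_1},\ldots,\chi_{t_s},n,z)=L_P(\widetilde{E_\alpha}/\widetilde{A_s})$ at $z=1$ does not depend on $P$. By Corollary \ref{cor : vanishing LP t1 ts}, this order is at least $1$ when $n\equiv s\pmod{q-1}$. So it is enough to take $P=\theta$ and show that $\D_z L_\theta(\chi_{t_1},\ldots,\chi_{t_s},n,z)|_{z=1}\neq 0$.

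\textbf{Step 1: pass from the $\theta$-adic to the $\infty$-adic side.} For $P=\theta$ the condition $\theta\nmid a$ means $a(0)\neq 0$. For such $a\in A_{+,d}$, write $a(\theta)=\theta^d a^*(1/\theta)$, where $a^*$ is the reciprocal polynomial. Then $a^*$ has constant term $1$, degree exactly $d$, and leading coefficient $a(0)$. Similarly
$$\chi_{t_i}(a)=t_i^d\,a^*(1/t_i)=t_i^d\,\chi'_{t_i}(a^*).$$
Write $a^*=c\,b$ with $c=a(0)\in\F_q^*$ and $b\in A_{+,d}$ satisfying $b(0)=c^{-1}$. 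This gives
$$\frac{\chi_s(a)}{a^n}z^d=c^{s-n}\,\frac{\chi'_s(b)}{b(1/\theta)^n}\,\bigl(t_1\cdots t_s\,\theta^{-n}z\bigr)^d .$$
Since $n\equiv s\pmod{q-1}$, the factor $c^{s-n}$ equals $1$. The $\theta$-adic topology in $\theta$ becomes the $\infty$-adic topology in the variable $1/\theta$. Hence $L_\theta(\chi_{t_1},\ldots,\chi_{t_s},n,z)$ becomes a partial Dirichlet-type sum, in the variable $1/\theta$, of a Pellarin series for the characters $\chi'_{t_i}$, evaluated at
$$z'=t_1\cdots t_s\,\theta^{-n}z .$$
The remaining constraint is $b(0)=c^{-1}$, summed over all $c\in\F_q^*$, which is the same as $b(0)\neq0$.

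\textbf{Step 2: remove the constant-term condition.} I would strip off the condition $b(0)\neq 0$ by inclusion–exclusion. Monic $b$ with $\theta\mid b$ are $\theta b_1$, which contribute $\chi'_s(\theta)\,\theta^{n}\,z'$ times the full series. This expresses $L_\theta(\cdot,z)$ as an explicit factor times $L(\chi'_{t_1},\ldots,\chi'_{t_s},n,z')$, in the appropriate variable. Lemma \ref{lem : L chi entire} ensures that this is an entire function of $z'$ that can be specialised at $t_i=\eta_i$, which justifies the manipulations and the evaluation at $z=1$.

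\textbf{Step 3: prove the zero is simple.} The main obstacle is showing that the derivative at $z=1$ does not vanish. Because the whole expression is a rational factor times an entire series evaluated at $z'=t_1\cdots t_s\theta^{-n}$, I would compute $\sum_{d} d\,S_d$, where $S_d$ is the degree-$d$ contribution, and show that a single term has strictly smaller $\theta$-adic valuation than all the others. Here the Gauss valuation in $t_1,\ldots,t_s$ helps: the factor $(t_1\cdots t_s)^d$ separates the degrees $d$ in $\underline t$. It then suffices to find one $d$ for which $d\,S_d\neq0$ with $p\nmid d$ and $S_d$ nonzero, using the vanishing and non-vanishing patterns of power sums $\sum_{b\in A_{+,d}}\chi'_s(b)/b^n$.

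If the $\underline t$-degree argument does not separate the terms cleanly, I would specialise $t_i$ to suitable $\eta_i\in\overline{\F_q}$, again using Lemma \ref{lem : L chi entire}. This reduces the question to a single-variable series in $z'$ whose zeros are known to be simple, in the spirit of the Diaz-Vargas–Polanco-Chi result used in Theorem \ref{th : order vanishing 1 riemann hyp}. A nonzero specialisation of the derivative forces the derivative itself to be nonzero.
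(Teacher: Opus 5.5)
Your reduction to $P=\theta$, the substitution $\theta\mapsto 1/\theta$ (including the observation $c^{s-n}=1$), and the inclusion--exclusion of Step 2 are exactly the paper's argument. The gap is Step 3. That is where the simplicity of the zero actually has to be proved, and you leave it as a plan whose proposed mechanisms fail.

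The valuations involved are trivial on $\F_q(\underline{t})^*$, so the factor $(t_1\cdots t_s)^d$ does not separate the degrees $d$. In general no single $d$ dominates: several degree-$d$ coefficients $S_d$ have valuation $0$, and their reductions modulo $\theta$ share $\underline{t}$-monomials. For instance, take $q$ odd, $s=q-1$ and $q-1\mid n$. Then $S_1\equiv-(1+t_1\cdots t_{q-1})$ and $S_2\equiv t_1\cdots t_{q-1}$ modulo $\theta$, so $d=1$ and $d=2$ both contribute to the coefficient of $\theta^0t_1\cdots t_{q-1}$ in $\sum_d dS_d$. Your fallback is not an argument either. For a general specialisation $t_i=\eta_i$, the simplicity of the zero is exactly the open question at the end of the paper, and you do not identify a specialisation for which it is known.

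The missing idea is already in your Step 2. The factor there is $1-\chi'_s(\theta)\theta^nz'$, and since $\chi'_s(\theta)=(t_1\cdots t_s)^{-1}$ and $z'=t_1\cdots t_s\theta^{-n}z$, it is exactly $1-z$. So, after $\Psi$, $L_\theta(\chi_{t_1},\ldots,\chi_{t_s},n,z)$ becomes $(1-z)\sum_{d\ge0}\sum_{a\in A_{+,d}}\chi'_s(a)a^{-n}(\chi_s(\theta)\theta^nz)^d$. The order of vanishing is then $1$ if and only if this full series is nonzero at $z=1$; no derivative, and no worry about $p\mid d$, is needed.

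The paper proves that nonvanishing by reading off the $\theta$-degree-$0$ part. This is $\sum_{d=0}^{\lfloor s/(q-1)\rfloor}R_d(s)$ with $R_d(s)=\chi_s(\theta)^d\sum_{a\in A_{+,d}}\chi'_s(a)$. It is nonzero because $R_0(s)=1$ and $R_d(s)|_{t_1=\cdots=t_s=0}=0$ for $d\ge1$ (Lemma~\ref{lem : character sums t1 ts}).

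Alternatively, your direct approach can be salvaged by extracting the coefficient of $\theta^0t_1^0\cdots t_s^0$ in $\sum_d dS_d$ in the $\theta$-adic picture. Since $a(0)^{s-n}=1$, this coefficient is $\sum_{d\ge1}d(q-1)q^{d-1}$ read in $\F_q$. Only $d=1$ survives, giving $-1\neq0$. Some explicit computation of this kind is required, and your proposal stops before it.
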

\begin{proof}
    By Theorem \ref{th : vanishing LP ts}, the order of vanishing does not depend on $P$, so it is enough to check that $L_P(\chi_{t_1},\ldots,\chi_{t_s},n,z)$ has a simple zero at $z=1$ for $P=\theta$. Consider the continuous $\Fqs$-linear map given by
    \begin{align*}
        \Psi : K_{s,\theta} &\longrightarrow K_{s,\infty}\\
        \theta &\longmapsto 1/\theta,
    \end{align*}
    which is an isomorphism of valued fields. We have
    \begin{multline*}
        \Psi(  L_\theta(\chi_{t_1},\ldots,\chi_{t_s},n,z)) = \Psi\left(\sum_{d\geq 0}\sum_{\substack{a\in A_{+,d}\\ \theta\nmid a}} \frac{\chi_s(a)}{a^n}z^d\right) \\
        = \sum_{d\geq 0}\sum_{\substack{a\in A_{+,d}\\ \theta\nmid a}} \frac{\chi_s(a)}{a(\frac{1}{\theta})^n}z^d 
        = \sum_{d\geq 0}\sum_{\substack{a\in A_{+,d} \\\theta\nmid a}} \frac{\theta^{dn}(\chi_s(a)\chi_s(\theta)^{-d})\chi_s(\theta)^{d}}{(\theta^d a(\frac{1}{\theta}))^n}z^d \\
        =\sum_{d\geq 0}\sum_{\substack{b\in A_d \\b(0)=1}} \frac{\chi'_s(b)}{ b^n}(\chi_s(\theta)\theta^nz)^d 
        = \sum_{d\geq 0}\sum_{\substack{a\in A_{+,d} \\ \theta \nmid a}} \frac{\chi'_s(a)}{ a^n}(\chi_s(\theta)\theta^nz)^d.
    \end{multline*}
The last equality is a consequence of the fact that if $b\in A$, $\lc(b)\in \F_q^*$ denotes the leading coefficient of $b$, and $n\equiv s \pmod{q-1}$, then
$$\frac{\chi_s(b)}{b^n} = \frac{\chi_s(\lc(b))\,\chi_s\left(\frac{b}{\lc(b)}\right)}{\lc(b)^n\left(\frac{b}{\lc(b)}\right)^n} = \frac{\lc(b)^{s}}{\lc(b)^n}\cdot\frac{\chi_s\left(\frac{b}{\lc(b)}\right)}{\left(\frac{b}{\lc(b)}\right)^n}
=\frac{\chi_s\left(\frac{b}{\lc(b)}\right)}{\left(\frac{b}{\lc(b)}\right)^n}.$$
Formally in $K_s[[y]]$, we have that
$$\sum_{d\geq 0}\sum_{\substack{a\in A_{+,d}\\ \theta \nmid a}} \frac{\chi'_s(a)}{ a^n}y^d = \left(1-y\frac{\chi'_s(\theta)}{\theta^n}\right)\sum_{d\geq 0}\sum_{\substack{a\in A_{+,d} }} \frac{\chi'_s(a)}{ a^n}y^d,$$
and therefore
\begin{align*}
    \Psi(L_P(\chi_{t_1},\ldots,\chi_{t_s},n,z)) &= (1-z)\sum_{d\geq 0}\sum_{\substack{a\in A_{+,d} }} \frac{\chi'_s(a)}{ a^n}(\chi_s(\theta)\theta^nz)^d.
\end{align*}
We note that the sum on the right hand-side converges at $z=1$ thanks to Lemma \ref{lem : L chi entire}. Hence, we just have to check that the sum does not vanish at $z=1$. 
We have
\begin{align*}
    \sum_{\substack{a\in A_{+,d} }} \frac{\chi'_s(a)}{ a^n}(\chi_s(\theta)\theta^n)^d &= \frac{\chi_s(\theta)^d\theta^{dn}\sum_{a\in A_{+,d}}\chi'_s(a)\prod_{b\in A_{+,d}\setminus\{a\}}b^n}{\prod_{a\in A_{+,d}}a^n} \\
    &=\frac{\chi_s(\theta)^d \theta^{q^ddn} \sum_{a\in A_{+,d}}\chi'_s(a) +  \text{ terms of lower degree}}{\theta^{q^ddn} + \text{ terms of lower degree}} \\
    &=\frac{\chi_s(\theta)^d  \sum_{a\in A_{+,d}}\chi'_s(a) +  \text{ terms of lower degree}}{1 + \text{ terms of lower degree}}.
\end{align*}
We conclude thanks to Lemma \ref{lem : character sums t1 ts}, which we state down below:
\begin{align*}
    \sum_{d\geq 0} \sum_{\substack{a\in A_{+,d} }}& \frac{\chi'_s(a)}{ a^n}(\chi_s(\theta)\theta^n)^d \\
    &= \underbrace{\left(\sum_{d=0}^{\lfloor\frac{s}{q-1}\rfloor} \chi_s(\theta)^d  \sum_{a\in A_{+,d}}\chi'_s(a)\right)}_\text{$\neq 0$} + \text{ terms of lower degree}\\& \neq 0.
\end{align*}
\end{proof}

\begin{lemma}\label{lem : character sums t1 ts}
    Write $\chi_s=\chi_{t_1},\ldots,\chi_{t_s}$. Let 
    $$R_d(s):=\chi_s(\theta)^d\sum_{a\in A_{+,d}}\chi'_s(a).$$
    Then 
    $$R_d(s)=0 \iff d(q-1)>s.$$
    Moreover, for any $D\in \N^*$ we have
    $$\sum_{d=0}^D R_d(s) \neq 0.$$
\end{lemma}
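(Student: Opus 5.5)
Write $\chi_s(\theta)=t_1\cdots t_s$ and $\chi'_s(a)=\prod_{i=1}^s a(1/t_i)$. The plan is to turn $R_d(s)$ into an explicit polynomial in $t_1,\ldots,t_s$ using reciprocal polynomials, and then to read off its vanishing from character sums over $\F_q$.

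\emph{Reduction to a polynomial identity.} For $a=\theta^d+c_1\theta^{d-1}+\cdots+c_d\in A_{+,d}$ we have $t^d a(1/t)=1+c_1t+\cdots+c_dt^d$. Hence
$$R_d(s)=\sum_{(c_1,\ldots,c_d)\in\F_q^d}\ \prod_{i=1}^s\bigl(1+c_1t_i+c_2t_i^2+\cdots+c_dt_i^d\bigr)\in\F_q[t_1,\ldots,t_s].$$
Expand the product. A monomial $t_1^{j_1}\cdots t_s^{j_s}$ with $0\le j_i\le d$ arises from exactly one choice of term in each factor: the term $c_{j_i}t_i^{j_i}$, with the convention $c_0=1$. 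Therefore its coefficient is
$$\sum_{c\in\F_q^d}\prod_{j=1}^d c_j^{e_j},\qquad e_j:=\#\{i : j_i=j\}.$$
This factors as $\prod_{j=1}^d\sum_{c\in\F_q}c^{e_j}$. Recall that $\sum_{c\in\F_q}c^e$ equals $-1$ when $e>0$ and $(q-1)\mid e$, and equals $0$ otherwise; the case $e=0$ gives $q=0$. So the coefficient is $(-1)^d$ if every $e_j$ is a positive multiple of $q-1$, and $0$ otherwise.

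\emph{First claim.} Since $\sum_j e_j\le s$, a nonzero coefficient forces $s\ge\sum_j e_j\ge d(q-1)$. Hence $R_d(s)=0$ whenever $d(q-1)>s$. Conversely, suppose $d(q-1)\le s$. Choose $j_i=j$ for the $q-1$ indices $i\in\{(j-1)(q-1)+1,\ldots,j(q-1)\}$, for each $j=1,\ldots,d$, and set $j_i=0$ for all remaining $i$. Then $e_j=q-1$ for every $j$, so this monomial appears with coefficient $(-1)^d\neq0$, and $R_d(s)\neq0$. The case $d=0$ is trivial, since $R_0=1$.

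\emph{Second claim.} Let $D'=\min(D,\lfloor s/(q-1)\rfloor)$. Then $\sum_{d=0}^D R_d(s)=\sum_{d=0}^{D'}R_d(s)$, because the other terms vanish by the first claim. I separate the summands by the largest exponent occurring in a monomial. Every monomial of $R_d(s)$ with nonzero coefficient has all exponents $\le d$. It also has some exponent exactly equal to $d$, because $e_d\ge q-1>0$. So the monomials of $R_{D'}(s)$ cannot occur in any $R_d(s)$ with $d<D'$. Since $R_{D'}(s)\neq0$ by the first claim, the sum is nonzero.

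The only delicate point is checking that distinct choices give distinct monomials, so that no cancellation occurs inside a single $R_d$. This holds because the exponent $j_i$ of $t_i$ determines which term was selected in the $i$-th factor.
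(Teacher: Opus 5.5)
Your proof is correct. The first claim is argued exactly as in the paper: you expand $\prod_{i=1}^s(1+c_1t_i+\cdots+c_dt_i^d)$, factor each coefficient as $\prod_{j=1}^d\sum_{c\in\F_q}c^{e_j}$, and apply the power-sum criterion over $\F_q$.

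For the second claim you take a different route from the paper. You grade monomials by their largest exponent and note that every monomial of $R_d(s)$ has largest exponent exactly $d$. You then isolate the top summand $R_{D'}(s)$, whose nonvanishing comes from the first claim. The paper looks only at the constant term: $R_0(s)=1$, while $R_d(s)|_{t_1=\cdots=t_s=0}=q^d=0$ for $d\ge 1$. Hence $\sum_{d=0}^D R_d(s)$ takes the value $1$ at $t_1=\cdots=t_s=0$.

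The paper's argument is one line and does not depend on the first claim. Yours is slightly longer but proves more: the $R_d(s)$ for distinct $d$ have pairwise disjoint monomial supports, so no cancellation between different degrees can occur. Your grading also contains the paper's observation, since the constant monomial has largest exponent $0$ and so occurs only in $R_0(s)$.
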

\begin{proof}
    We follow very closely the proof of \cite[Lemma 30]{angles_universal_2013}. Recall that 
    $$\sum_{\lambda\in \F_q}\lambda^l = \begin{cases}
        -1 & l\equiv 0 \pmod{q-1} \text{ and } l\geq 1 \\
        0 & \text{otherwise}
    \end{cases}.$$
    Since
    $$R_d(s) = \sum_{a_1,\ldots,a_d\in\F_q}\prod_{i=1}^s(1 + a_1t_i+ \ldots + a_dt_i^d),$$
    the coefficient of $t_1^{v_1}\ldots t_s^{v_s}$, $0\leq v_i \leq d$, is
    $$c_{v_1,\ldots,v_s}=\sum_{a_1,\ldots,a_d\in\F_q}a_{v_1}\ldots a_{v_s}$$
    if we define $a_0=1$. Set
    $$\mu_i:=\#\{j: v_j=i\}$$
    for $i=0,\ldots,d$. Note that $\sum_{i=0}^d\mu_i=s$, and therefore $\sum_{i=1}^d\mu_i\leq s$.
    We can now rewrite the coefficients as
    $$c_{v_1,\ldots,v_s} = \left(\sum_{a_1\in\F_q}a_1^{\mu_1}\right)\ldots\left(\sum_{a_d\in\F_q}a_d^{\mu_d}\right). $$
    By the remark made at the beginning of the proof, $c_{v_1,\ldots,v_s}$ does not vanish if and only if all $\mu_i$ are nonzero multiples of $q-1$. If $s<d(q-1)$, then for all $0\leq v_1,\ldots,v_s\leq d$, we have always at least one $\mu_i<q-1$, $i>0$, since $\sum_{i=1}^d\mu_i\leq s < d(q-1)$. Therefore $R_d(s)=0$. On the other hand, if $s\geq d(q-1)$, then any coefficient $c_{v_1,\ldots,v_s}$ such that $\mu_1=\ldots=\mu_d=q-1$ does not vanish. 
    
    To prove the second part of the lemma, it is enough to note that $R_0(s)=1$ and $R_d(s)|_{t_1=0,\ldots t_s=0}=0$ for $d\geq 1$.
\end{proof}

%%%%%%%%%%%%%%%%%%%%%%%%%%%%%%%%%%%%%%%%%%%%%%%%%%%%%%%%%%%%%%%%%%%%%%%%%%%%%%%%%%%%%%%%%%%%%%%%%%%%%%%%%%%%%%%%%%
%%%%%%%%%%%%%%%%%%%%%%%%%%%%%%%%%%%%%%%%%%%%%%%%%%%%%%%%%%%%%%%%%%%%%%%%%%%%%%%%%%%%%%%%%%%%%%%%%%%%%%%%%%%%%%%%%%
%%%%%%%%%%%%%%%%%%%%%%%%%%%%%%%%%%%%%%%%%%%%%%%%%%%%%%%%%%%%%%%%%%%%%%%%%%%%%%%%%%%%%%%%%%%%%%%%%%%%%%%%%%%%%%%%%%

\section{$P$-adic Dirichlet-Goss $L$-series} \label{sec : Dirichlet characters}

Let $\eta  \in \overline{\F_q}$ and let $P\in A_+$ be an irreducible polynomial such that $P(\eta)=0$. The morphism of $\F_q$-algebras $\chi_\eta:A \rightarrow\overline{\F_q}$, $a\mapsto a(\eta)$ induces an injective morphism of groups
$$\chi_\eta: \left(\frac{A}{PA}\right)^\times  \rightarrow\overline{\F_q}^*.$$
In general, a Dirichlet character is a morphism of groups
$$\chi:\left(\frac{A}{fA}\right)^\times \rightarrow\overline{\F_q} ^*$$
for some $f\in A_+$. If $(a,f)\neq 1$, we set $\chi(a)=0$. For every Dirichlet character $\chi$, there exist unique elements (up to permutation) $\eta_1,\ldots,\eta_s \in \overline{\F_q}$ such that
$$\chi(a)=\prod_{k=1}^s\chi_{\eta_k}(a)\quad \forall a\in A.$$
We say that $s$ is the \textit{type} of $\chi$. 
\begin{comment}
    Let $P_k\in A_+$ be the prime of $A$ such that $P_k(\zeta_k)=0$. We define the conductor of $\chi$ as $\cond(\chi)=\lcm(P_1,\ldots,P_s)$. We will assume that all characters are primitive, meaning that they are defined modulo their conductor. In other words, we assume $f=\cond(\chi)$.
\end{comment}

As usual, fix $P\in A_+$ a monic irreducible polynomial. If $\chi$ is a Dirichlet character, then the $P$-adic $L$-series associated to $\chi$ is 
$$L_P(n,\chi):=\sum_{d\geq 0}\sum_{\substack{a\in A_{+,d}\\P\nmid a}}\frac{\chi(a)}{a^n},$$
and the twisted $L$-series is
$$L_P(n,\chi,z):=\sum_{d\geq 0}\sum_{\substack{a\in A_{+,d}\\P\nmid a}}\frac{\chi(a)}{a^n}z^d.$$
If $\chi$ is the character of type $s$ associated to the values $\eta_1,\ldots,\eta_s \in \overline{\F_q}$, then
$$L_P(n,\chi,z) = L_P(\chi_{t_1},\ldots,\chi_{t_s},n,z)|_{t_1=\eta_1,\ldots , t_s=\eta_s}.$$

Let $\F_q(\chi):=\F_q(\eta_1,\ldots,\eta_s)$. Define
$$a(\chi):= (\eta_1-\theta)\ldots(\eta_s-\theta) \in A \otimes \F_q(\chi).$$
The map 
\begin{align*}
    E_{\alpha(\chi)}:A\otimes \Fqchi &\rightarrow\Mat_{n \times n}(K\otimes \F_q(\chi)) \\
    \theta &\mapsto \partial(\theta)+ N_{\alpha(\chi)}\tau,
\end{align*}
endows $(K\otimes \F_q(\chi))^n$ with an $A\otimes \Fqchi$-module structure, where $\partial(\theta)$ and $N_{\alpha(\chi)}$ are defined as in Section $\ref{sec : Pellarin L-series}$, and where the Frobenius only acts on the first component: $\tau(x\otimes y)=\tau(x)\otimes y$ for $x\in K,y\in \Fqchi$. Clearly
$$E_{\alpha(\chi)} = E_\alpha|_{t_1=\eta_1,\ldots,t_s=\eta_s}.$$
We set 
$$L_P(E_{\alpha(\chi)}/A\otimes\Fqchi):=L_P(E_\alpha/A_s)|_{t_1=\eta_1,\ldots,t_s=\eta_s},$$ so that
$$L_P(n,\chi)=L_P(E_{\alpha(\chi)}/A\otimes\Fqchi).$$
Let 
$$\exp_{E_{\alpha(\chi)}}:= (\exp_{E_\alpha})|_{t_1=\eta_1,\ldots,t_s=\eta_s}.$$

If $v$ is either the $\infty$-adic or a $P$-adic place, then we let $K_v\hat{\otimes}\Fqchi$ be the $v$-adic completion of $K_v\otimes\Fqchi$.
\begin{prop} \label{prop : expenential chi}
    The exponential
    $$\exp_{E_{\alpha(\chi)}}:\Lie_{E_{\alpha(\chi)}}(K_\infty\hat{\otimes}\Fqchi) \rightarrow E_{\alpha(\chi)}(K_\infty\hat{\otimes}\Fqchi)$$
    is not injective if and only if $s(\chi)\equiv n \pmod{q-1}$.
\end{prop}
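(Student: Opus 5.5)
We specialize Proposition \ref{prop : properties E_a} at $t_i=\eta_i$, transporting the description of the kernel of $\exp_{E_\alpha}$ via the element $\omega_\alpha$. Set
$$\omega_{\alpha(\chi)}:=\omega_\alpha|_{t_1=\eta_1,\ldots,t_s=\eta_s}=\sqrt[q-1]{(-\theta)^s}\prod_{i\geq 0}\tau^i\!\left(\frac{(-\theta)^s}{(\eta_1-\theta)\cdots(\eta_s-\theta)}\right),$$
where $\tau$ acts only on the $K$-component. Each factor is $1+O(\theta^{-q^i})$ in $K_\infty\hat{\otimes}\Fqchi$, so the product converges and is a unit there. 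Hence $\omega_{\alpha(\chi)}\in\sqrt[q-1]{(-\theta)^s}\,(K_\infty\hat{\otimes}\Fqchi)^\times$, and it satisfies $\tau(\omega_{\alpha(\chi)})=\alpha(\chi)\omega_{\alpha(\chi)}$.

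The identities $\Cn_a\omega_{\alpha(\chi)}=\omega_{\alpha(\chi)}(E_{\alpha(\chi)})_a$ and $\exp_{E_{\alpha(\chi)}}=\omega_{\alpha(\chi)}^{-1}\exp_{\Cn}\omega_{\alpha(\chi)}$ follow by specializing points 1 and 2 of Proposition \ref{prop : properties E_a}. These are identities of coefficients in $\tau$, and the coefficients of $\exp_{E_\alpha}$ lie in $K\otimes\F_q[\underline t]$, so specialization is legitimate. Over $\C_\infty\hat\otimes\Fqchi$, which is a finite product of copies of $\C_\infty$ because $\Fqchi$ is finite, the kernel of $\exp_{\Cn}$ is $\partial(A\otimes\Fqchi)\pi_n$. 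Therefore the kernel of $\exp_{E_{\alpha(\chi)}}$ is $\partial(A\otimes\Fqchi)\,\pi_n/\omega_{\alpha(\chi)}$.

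Non-injectivity on $K_\infty\hat{\otimes}\Fqchi$ is equivalent to some nonzero element $\partial(a)\pi_n/\omega_{\alpha(\chi)}$ having all coordinates in $K_\infty\hat{\otimes}\Fqchi$. Since $\partial(a)$ has coefficients in $K\otimes\Fqchi$ and is invertible for $a\neq0$, this reduces to the question of whether $\pi_n/\omega_{\alpha(\chi)}$ itself is rational over $K_\infty\hat{\otimes}\Fqchi$. It is known that $\pi_n\in\widetilde{\pi}^n\cdot\Lie_{\Cn}(K_\infty)$; more precisely, the coordinates of $\pi_n$ are $\widetilde\pi^n$ times elements of $K_\infty$, and $\widetilde\pi\in\sqrt[q-1]{-\theta}\,K_\infty^\times$. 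Consequently $\pi_n/\omega_{\alpha(\chi)}$ equals $\sqrt[q-1]{(-\theta)^{n-s}}$ times a nonzero vector with entries in $K_\infty\hat{\otimes}\Fqchi$. The problem is thus reduced to showing that $\sqrt[q-1]{(-\theta)^{n-s}}\in K_\infty\hat{\otimes}\Fqchi$ if and only if $q-1\mid n-s$.

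The main obstacle is this last step, because $\Fqchi$ contains more roots of unity than $\F_q$. If $q-1\mid n-s$, the root is a power of $-\theta$ times a $(q-1)$-th root of unity in $\F_q^\times$, so it is rational. Conversely, suppose $x^{q-1}=(-\theta)^{n-s}$ with $x\in K_\infty\hat\otimes\Fqchi\cong\prod\F_{q^f}((1/\theta))$. Comparing $1/\theta$-adic valuations in any factor gives $(q-1)v(x)=-(n-s)$, so $q-1\mid n-s$. This holds regardless of how large the constant field is, since the valuation group is unaffected by the constant field extension.
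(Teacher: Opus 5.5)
Your proposal is correct and takes the same route as the paper, whose proof is simply ``same as Proposition~\ref{prop : properties E_a}'': specialize $\exp_{E_\alpha}=\omega_\alpha^{-1}\exp_{\Cn}\omega_\alpha$, identify the kernel as $\partial(A\otimes\Fqchi)\,\pi_n/\omega_{\alpha(\chi)}$, and reduce to whether $\sqrt[q-1]{(-\theta)^{n-s}}$ is rational, all of which you carry out in more detail. One small correction: the kernel of $\exp_{\Cn}$ on $(\C_\infty\hat\otimes\Fqchi)^n$ is $\partial(A\otimes\Fqchi)\pi_n$ because $\exp_{\Cn}$ is $\Fqchi$-linear, so its kernel is $\ker(\exp_{\Cn})\otimes_{\F_q}\Fqchi$; it does not follow from the decomposition into copies of $\C_\infty$, since $\tau\otimes 1$ does not act factorwise there, and likewise your last step is immediate because $x\otimes 1\in K_\infty\otimes\Fqchi$ with $x\in\C_\infty$ forces $x\in K_\infty$ ($\Fqchi$ being free over $\F_q$ on a basis containing $1$).
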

\begin{proof}
    Same proof as Proposition \ref{prop : properties E_a}.
\end{proof}
Now Corollary \ref{cor : vanishing P-adic L} still holds in this context, which gives us the following result.
\begin{cor} \label{cor : vanishing L Dirichlet}
    Let $\chi$ be a Dirichlet character. Then 
    $$L_P(n,\chi)=0$$
    if and only if 
    $$n\equiv s(\chi) \pmod{q-1}.$$
\end{cor}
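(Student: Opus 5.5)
The plan is to deduce the statement by combining Proposition \ref{prop : expenential chi} with the Dirichlet-character version of Corollary \ref{cor : vanishing P-adic L}, as the sentence preceding the statement suggests. The real content is to check that the whole chain behind Corollary \ref{cor : vanishing P-adic L} survives the specialization $t_i=\eta_i$. Concretely, I will work with the Anderson $A\otimes\Fqchi$-module $E_{\alpha(\chi)}$ over the coefficient ring $\F_q(\chi)$. This is a finite field extension of $\F_q$, on which $\tau$ acts trivially, exactly as $\F_q(\underline{t})$ did in Section \ref{sec : Pellarin L-series}. The relevant chain consists of Theorem \ref{th : class formula mod K}, its twisted version Theorem \ref{th : class formula mod K tilde}, Theorem \ref{th : P-adic formula W} and Proposition \ref{prop : kernel Log torsion points}.

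\textbf{Step 1: the module is admissible.} I will verify that $E_{\alpha(\chi)}$ has dual $t$-motive $N_{E_{\alpha(\chi)}}=(L\otimes\Fqchi)[t]e_n^T$. This gives rank $1$ and $\sigma N\subseteq (t-\theta)N$, by the same one-line computation as for $\Cn$ and $E_\alpha$; the only change is that the constant $\alpha$ becomes $a(\chi)$, which is nonzero in $A\otimes\Fqchi$. Since $A\otimes\Fqchi=\Fqchi[\theta]$ is again a polynomial ring over a finite field (after base change), the reduced class formula and the $P$-adic formula hold verbatim over $\Fqchi[\theta]$. The only adjustment is that $P$ may split in $A\otimes\Fqchi$, so I will either work with $K_P\otimes\Fqchi$ as a finite product of fields, or argue factor by factor. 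With this, $U(E_{\alpha(\chi)})\cap W$ is free of rank $1$, generated by some $u$, and
$$L_P(n,\chi)=\iota\text{-free scalar}\cdot\det_{(b)}\bigl(\Log_{E_{\alpha(\chi)},P}(\exp_{E_{\alpha(\chi)}}(u))\bigr)$$
up to a unit of $K\otimes\Fqchi$.

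\textbf{Step 2: the two implications.} I will first prove the direction ``$\Leftarrow$''. If $n\equiv s\pmod{q-1}$, then Proposition \ref{prop : expenential chi} gives a nonzero $x$ in the kernel of $\exp$ on $\Lie(K_\infty\hat\otimes\Fqchi)$. The kernel is a rank-one module containing $x$, and $\partial(a)x\in U\cap W$ for suitable $a$ (by Yu-type uniqueness, or simply because $x$ satisfies $\exp(x)=0\in E(A\otimes\Fqchi)$ and $U\cap W$ has finite index in the rank-one saturation). Hence $\exp(u)$ is torsion, so $\Log_P(\exp(u))=0$ and $L_P(n,\chi)=0$. Alternatively, this direction follows by citing Lucas's theorem \cite[Theorem 5.2]{lucas_P-adic_2026}, applied after specialization.

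For ``$\Rightarrow$'', I will follow the proof of Corollary \ref{cor : vanishing P-adic L}. If $L_P(n,\chi)=0$, then $\Log_P(\exp(u))=0$. By Proposition \ref{prop : kernel Log torsion points} for this module, whose proof in \cite{lucas_P-adic_2026} is insensitive to a finite constant field extension, $\exp(u)$ is torsion. Thus $\exp(\partial(a)u)=0$ with $\partial(a)u\neq 0$, so the exponential is not injective, and Proposition \ref{prop : expenential chi} then yields $n\equiv s\pmod{q-1}$.

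\textbf{Main obstacle.} The delicate point is the claim that $L_P(n,\chi)$ equals the specialization of the $P$-adic $L$-series of $E_\alpha$, rather than merely the Euler product of the specialized module. I must ensure that the $P$-adic determinant identity of Theorem \ref{th : P-adic formula W with ts} can be specialized at $t_i=\eta_i$ without the unit $\alpha\in K_s^*$ acquiring a zero or pole there. To avoid this issue, I would rather redo Theorem \ref{th : P-adic formula W} directly for $E_{\alpha(\chi)}$ over the constant field $\Fqchi$. In that setting the local factors are $[\Lie]/[E]$ computed over $\Fqchi[\theta]$, and one checks, as in \cite[Section 4.2]{demeslay_class_2022}, that their product over $Q\neq P$ recovers $\sum \chi(a)a^{-n}$ with $P\nmid a$. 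This reduces everything to the general machinery with no specialization subtleties.
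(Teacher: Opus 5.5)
Your proposal follows the paper's route: the paper simply asserts that Corollary \ref{cor : vanishing P-adic L} carries over to $E_{\alpha(\chi)}$ over the constant field $\Fqchi$ and combines it with Proposition \ref{prop : expenential chi}. There, Lucas's theorem gives one direction, and the kernel of $\Log_{E,P}$ being the torsion gives the other, exactly as you do; your remarks on $\Fqchi$ and the possible splitting of $P$ are more care than the paper takes.

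The one piece to discard is your first argument for ``$\Leftarrow$''. The relation $\exp(x)=0$ only gives $x\in U$, not $x\in W$. The finite-index remark presupposes that $x$ already lies on the line $W$, and no Yu-type statement for $E_{\alpha(\chi)}$ is available to place the period there. So use your fallback, \cite[Theorem 5.2]{lucas_P-adic_2026}, which goes through the general $P$-adic class formula built on all of $U(E/\OF)$ and therefore sees the period directly.
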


\begin{defin}
    We say that a property holds for \textit{almost all} characters of type $s$ if there exists a nonzero polynomial $G\in {\F_q}[X_1,\ldots,X_s]$ such that the property holds for all characters corresponding to $(\eta_1,\ldots,\eta_s)\in \overline{\F_q}^s$ with $G(\eta_1,\ldots,\eta_s)\neq 0$.
\end{defin}
\begin{theorem} \label{th : ord vanishing dirichlet characters}
    Take $s\in \N$ such that $n\equiv s \pmod{q-1}$. For almost all Dirichlet characters $\chi$ of type $s$, we have
    $$\ord_{z=1}L_P(n,\chi,z) = 1.$$
    If $s<q-1$, then this is true for all characters of type $s$.
\end{theorem}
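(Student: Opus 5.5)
We will mimic the proof of Theorem \ref{th : order vanishing t1 ts}, now specialized at $t_i=\eta_i$. First, by the analogue of Theorem \ref{th : vanishing LP ts} for $E_{\alpha(\chi)}$ (the results of Section \ref{sec : ord van L_P} carry over to $A\otimes\Fqchi$ exactly as for Corollary \ref{cor : vanishing L Dirichlet}), the order of vanishing of $L_P(n,\chi,z)$ at $z=1$ does not depend on $P$. So it suffices to treat $P=\theta$. We also know from Corollary \ref{cor : vanishing L Dirichlet} that the order is at least $1$.

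For $P=\theta$ we apply the map $\Psi$ ($\theta\mapsto 1/\theta$), now $\Fqchi$-linear. The computation in the proof of Theorem \ref{th : order vanishing t1 ts} is purely formal and commutes with the specialization $t_i=\eta_i$. It uses only $n\equiv s \pmod{q-1}$ and the identity $\chi_s(\lambda)=\lambda^s$ for $\lambda\in\F_q^*$, which still hold. It gives
$$\Psi(L_\theta(n,\chi,z))=(1-z)\sum_{d\geq 0}\sum_{a\in A_{+,d}}\frac{\chi'(a)}{a^n}(\chi(\theta)\theta^n z)^d,$$
where $\chi'$ is the specialization of $\chi'_s$, i.e.\ $\chi'(\theta)=\prod 1/\eta_i$. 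Here we must have all $\eta_i\neq 0$ for $\chi'$ to make sense. If some $\eta_i=0$, then $\chi_{\eta_i}(a)=a(0)$, and the factor $(1-y\chi'_s(\theta)/\theta^n)$ must be handled separately. This is a closed condition that we can put into $G$, namely $G$ divisible by $X_1\cdots X_s$; alternatively, clear denominators by working with $\prod t_i^{d}\chi'_s$, which is polynomial. The sum converges at $z=1$ by Lemma \ref{lem : L chi entire}, so it remains to show that it is nonzero at $z=1$.

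As before, the $d$-th term has leading part $R_d(s)|_{t=\eta}$ in the $1/\theta$-expansion, with lower order corrections. Only $d\le \lfloor s/(q-1)\rfloor$ contribute to the top degree. The leading coefficient of the whole sum is therefore
$$G_0(\eta):=\sum_{d=0}^{\lfloor s/(q-1)\rfloor}R_d(s)|_{t_i=\eta_i}.$$
By Lemma \ref{lem : character sums t1 ts}, $\sum_d R_d(s)$ is a nonzero element of $\F_q(\underline t)$. After multiplying by a suitable power of $t_1\cdots t_s$, it becomes a nonzero polynomial in $\F_q[\underline t]$. Take $G$ to be this polynomial times $X_1\cdots X_s$. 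Whenever $G(\eta)\neq0$, the leading coefficient is nonzero, so the sum at $z=1$ is nonzero and the order is exactly $1$. The main obstacle is to make sure that the "leading term" comparison is legitimate after specialization. The lower-degree terms are controlled uniformly in $d$ (the denominators are monic in $\theta$ and do not depend on $\eta$), so specializing the top coefficient is valid. This is the step I would verify carefully.

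For $s<q-1$ we have $\lfloor s/(q-1)\rfloor=0$, so the leading term is $R_0(s)=1\neq0$ for every $\eta$. Hence no exceptional set arises, apart from the issue with $\eta_i=0$. For that I would run the argument with $\chi_{t_i}$ rather than $\chi'_{t_i}$ (equivalently, absorb $\chi_s(\theta)^d$ into the sum, so that the $d=0$ term is $1$ independently of $\eta$). This shows the result for all characters of type $s<q-1$.
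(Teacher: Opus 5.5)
Your proposal is correct and is essentially the paper's proof. You reduce to $P=\theta$ by independence of $P$, transport by $\Psi$ to get $(1-z)$ times a series convergent at $z=1$, and observe that its $\theta^0$-coefficient is $\sum_{d\le \lfloor s/(q-1)\rfloor} R_d(s)|_{t_i=\eta_i}$, which Lemma \ref{lem : character sums t1 ts} controls. Your extra care with characters having some $\eta_i=0$ covers a case the paper's proof (which begins with ``Assume $\chi(\theta)\neq 0$'') never revisits. Absorbing $\chi_s(\theta)^d$ works because $R_d(s)=\sum_{a\in A_{+,d}}\prod_i t_i^d a(1/t_i)$ is already a polynomial, so no extra power of $t_1\cdots t_s$ is needed, and the resulting identity specializes at every $\eta\in\overline{\F_q}^s$.
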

\begin{proof}
    Assume $\chi(\theta)\neq 0$. By Theorem \ref{th : vanishing does not depend P}, the order of vanishing does not depend on $P$, so it is enough to check that $L_P(n,\chi,z)$ has a simple zero at $z=1$ for $P=\theta$. We use again the isomorphism of valued fields
    \begin{align*}
        \Psi : K_\theta \hat{\otimes} \Fqchi &\longrightarrow K_\infty\hat{\otimes}\Fqchi\\
        \theta \otimes 1 &\longmapsto 1/\theta \otimes 1.
    \end{align*}
    Replicating the proof of Theorem \ref{th : order vanishing t1 ts}, we get
\begin{align*}
    \Psi(L_\theta(n,\chi,z)) &= (1-z)\sum_{d\geq 0}\sum_{\substack{a\in A_{+,d} }} \frac{\chi'(a)}{ a^n}(\chi(\theta)\theta^nz)^d,
\end{align*}
where $\chi'(a) = a(\eta_1^{-1})\ldots a(\eta_s^{-1})$. Again, the sum on the right-hand side converges at $z=1$ due to Lemma \ref{lem : L chi entire}. Hence, we just have to check that the sum does not vanish at $z=1$. Let
$$G(t_1,\ldots,t_s):=\sum_{d=0}^{\lfloor\frac{s}{q-1}\rfloor} \chi_{s}(\theta)^d  \sum_{a\in A_{+,d}}\chi_{s}'(a)\in \Fq[t_1,\ldots,t_s],$$
which is nonzero by Lemma \ref{lem : character sums t1 ts}. Then
\begin{align*}
    \sum_{d\geq 0} \sum_{\substack{a\in A_{+,d} }}& \frac{\chi'(a)}{ a^n}(\chi(\theta)\theta^n)^d =G(\eta_1,\ldots,\eta_s) + \text{ terms of lower degree}
\end{align*}
does not vanish whenever $G(\eta_1,\ldots,\eta_s)\neq 0$. If $s<q-1$, then $G(\eta_1,\ldots,\eta_s)=1$ is always nonzero.
\end{proof}

\bibliographystyle{alpha}

\begin{small} 
  \bibliography{biblio}
\end{small}

\end{document}